\setlist[enumerate]{leftmargin=.5in}
\setlist[itemize]{leftmargin=.5in}
\begin{document}

\title{A Variational Model Dedicated to Joint Segmentation, Registration and Atlas Generation for Shape Analysis}
\titlerunning{A Variational Multitasking Framework for Shape Analysis}
\author{No\'emie Debroux, John Aston, Fabien Bonardi, Alistair Forbes, Carole Le Guyader, Marina Romanchikova \and Carola Sch\"onlieb}
\authorrunning{N. Debroux, J. Aston, F. Bonardi, A. Forbes, C. Le Guyader, M. Romanchikova, and C.-B. Sch\"onlieb}
\institute{\scriptsize Department of Applied Mathematics and Theoretical Physics (DAMPT), Centre for Mathematical Sciences, University of Cambridge, Wilberforce Road, Cambridge CB3 OWA, UK\\ \mailsc \\ Statslab, Department of Pure Mathematics and Mathematical Statistics, Centre for Mathematical Sciences, University of Cambridge, Wilberforce Road, Cambridge CB3 0WA, UK \\ \mailsb \\ IBISC, Universit\'e d'\'Evry, 36, Rue du Pelvoux, CE1455 Courcouronnes, 91020 Evry C\'edex, France \\ \mailsa \\ National Physical Laboratory, Hampton Road, Teddington, Middlesex, TW11 0LW UK \\ \mailsd \\ Normandie Univ, Institut National des Sciences Appliqu\'ees de Rouen, Laboratory of Mathematics, 76000 Rouen, France\\ \mailse }

\maketitle

\begin{abstract}
  In medical image analysis, constructing an atlas, i.e. a mean representative of an ensemble of images, is a critical task for practitioners to estimate variability of shapes inside a population, and to characterise and understand how structural shape changes have an impact on health. This involves identifying significant shape constituents of a set of images, a process called segmentation, and mapping this group of images to an unknown mean image, a \textcolor{black}{task} called registration, making a statistical analysis of the image population possible. To achieve this goal, we propose treating these operations jointly to leverage their positive mutual influence, in a hyperelasticity setting, \textcolor{black}{by viewing the shapes to be matched as Ogden materials.} 
  The approach is complemented by novel hard constraints on the $L^\infty$ norm of both the Jacobian and its inverse, ensuring that the deformation is a bi-Lipschitz homeomorphism. Segmentation is based on \textcolor{black}{the} Potts model, which allows for a partition into more than two regions\textcolor{black}{, i.e. more than one shape}. The connection to the registration problem is ensured by the dissimilarity measure that aims to align the segmented \textcolor{black}{shapes}. A representation of the deformation field in a linear space equipped with a scalar product is then computed in order to perform a geometry-driven Principal Component Analysis (PCA) and to extract the main modes of variations inside the image population. Theoretical results emphasizing the mathematical soundness of the model are provided, among which existence of minimisers, analysis of a numerical method of resolution, asymptotic results and a PCA analysis, as well as numerical simulations demonstrating the ability of the modeling to produce an atlas exhibiting sharp edges, high contrast and a consistent shape. 
\end{abstract}

\begin{keywords}
  Segmentation, registration, nonlinear elasticity, Ogden materials, Potts model, atlas generation, asymptotic results, $D^m$-splines, geometric PCA
\end{keywords}


\section{Introduction}\label{sec:introduction}
In recent years, joint image processing models have experienced increasing attention, including combined segmentation/registration models \cite{debroux-bib:droske_rumpf_2007,debroux-bib:han} (joint phase field approximation and registration), \cite{debroux-bib:lord} (model based on metric structure comparison),  \cite{debroux-bib:debroux_ozere,debroux-bib:swierczynski} (level set formulation that merges the piecewise constant Mumford-Shah model with registration principles), \cite{debroux-bib:gooya} (grounded in the expectation maximisation algorithm), \cite{debroux-bib:debroux_le_guyader_SIIMS} (based on a nonlocal characterisation of weighted-total variation and nonlocal shape descriptors), or \cite{debroux-bib:An2005,debroux-bib:leguyader1,debroux-bib:ozere_gout_le_guyader,debroux-bib:rumpf_wirth,debroux-bib:vemuri,debroux-bib:wyatt}\textcolor{black}{;}  joint image reconstruction and motion estimation \cite{debroux-bib:blume,debroux-bib:burger_dirks,debroux-bib:chun,debroux-bib:odille,debroux-bib:schumacher,debroux-bib:tomasi,debroux-bib:burger_2017,debroux-bib:lucka,debroux-bib:aviles}\textcolor{black}{;} joint reconstruction and registration for post-acquisition motion correction \cite{debroux-bib:corona} with the goal to reconstruct a single motion-free corrected image and retrieve the physiological dynamics through the deformation maps, joint optical flow estimation with phase field segmentation of the flow field \cite{debroux-bib:brune}, or joint segmentation/optimal transport models \cite{debroux-bib:essay70998} (to determine the velocity of blood flow in vascular structures).
This can be attributed to several factors:
\begin{itemize}
\item[(i)] the will to limit error propagation. Indeed, addressing the considered tasks in a unified joint framework (or multitasking) and exploiting, thus the strong correlation between them reduces the propagation of uncertainty, contrary to a sequential treatment that may amplify errors from step to step; 
\item[(ii)] Second, \textemdash and this is a corollary of the previous point \textemdash, performing simultaneously these tasks yields positive mutual influence and benefit on the obtained results \textcolor{black}{as shown in Figure \ref{figure:intro}}. To exemplify this observation, we can think first of joint models for image reconstruction and registration: not only does the approach correct the misalignment problem, but it also allows for alleviating ghosting artefacts.\newline
In the case of joint segmentation/registration models \textemdash the case that will be addressed more thoroughly afterwards \textemdash, as salient component pairing, shape/geometrical feature matching and intensity distribution comparison drive registration, processing these tasks simultaneously in a single framework may in particular reduce the influence of noise since the mapping can be done through the pairing of significant structures, e.g., by transferring the edges, and not only \textcolor{black}{through} intensity correlation.\newline
Besides, the registration operation can be viewed as the inclusion of priors to guide the segmentation process, in particular, for the questions of topology-preservation (the unknown deformation is substituted for the classical evolving curve of the segmentation process \textemdash \cite{debroux-bib:kass,le_guyader_vese_snakes_2008}\cite[Chapter 9]{debroux-bib:vese2015variational} for instance \textemdash and the related Jacobian determinant is subject to positivity  constraints) and geometric priors (since the registration allows to overcome the issue raised by weak boundary definition due to noise sources in the acquisition device, to degradation of the image contents during reconstruction, etc., by restoring them). In return, relevant segmented structures contribute to fostering accurate registration, providing then a reliable estimation of the deformation between the encoded structures, not only based on intensity matching (which takes the form of a local criterion), but also on geometrical/shape pairing (which has a nonlocal character).
\begin{figure}[H]
    \centering
    \includegraphics[width=\linewidth]{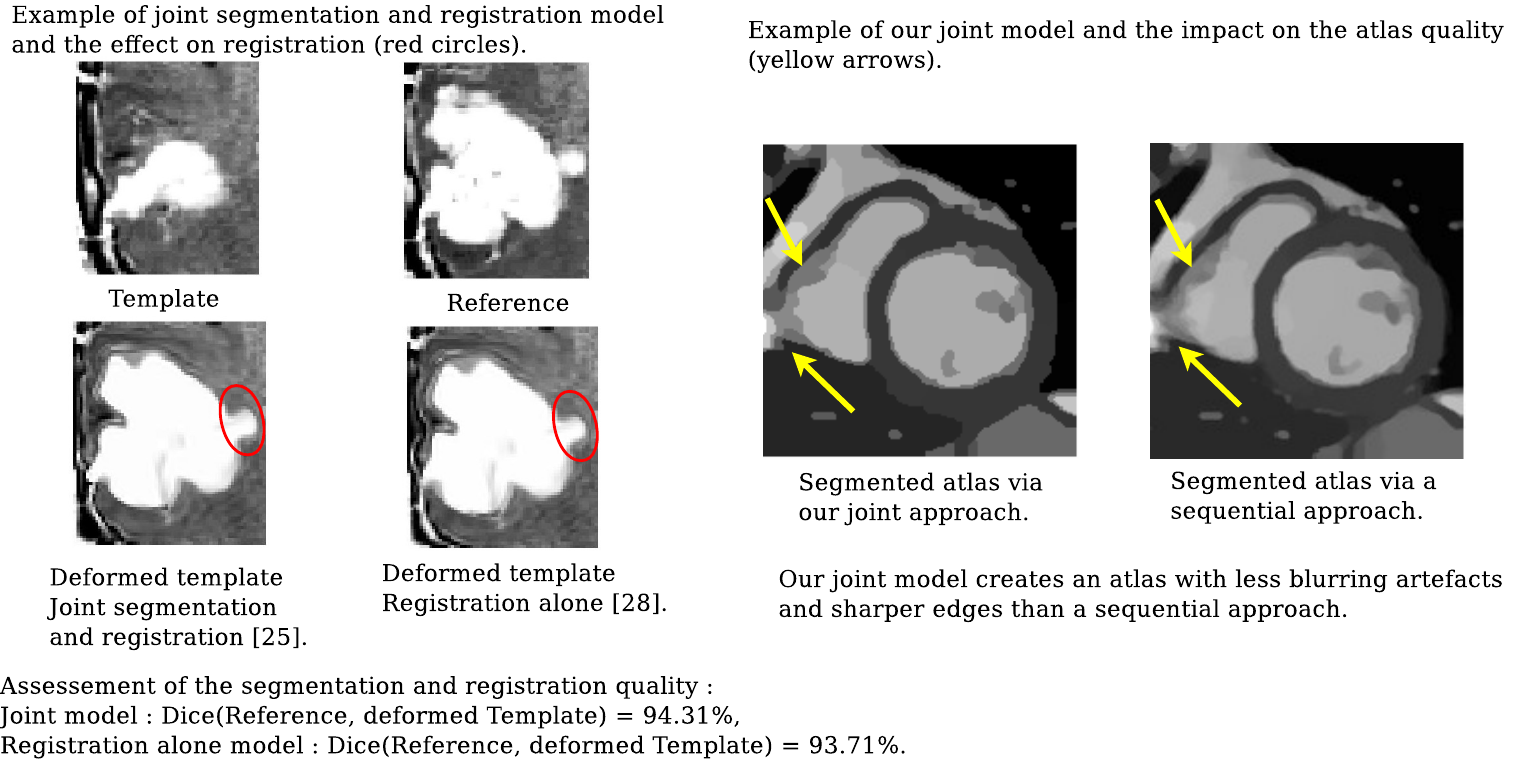}
    \caption{Illustration of the positive impact of joint approaches against sequential ones.}
    \label{figure:intro}
\end{figure}
\item[(iii)] Lastly, the pooling of the various results produced by the joint model allows for accurate post-processing treatments based on mutual analysis: for instance, the representation of the true underlying anatomy of an organ from a set of multiple acquisitions corrupted by motion, when tackling simultaneously reconstruction and registration, or the generation of an atlas in the context of joint segmentation and registration. The term atlas refers to a specific model for an ensemble of images and serves as a benchmark, i.e. a meaningful statistical image, to account for the variability (e.g., different shapes and sizes for organs in medical imaging) that might be observed in a population of images. 
\end{itemize}
The proposed work adopts this joint model philosophy. It aims at addressing the issue of designing a unified variational model for joint segmentation, registration and atlas generation by exploiting the strong correlation between the two former tasks thus reducing error propagation, in the medical imaging setting. The latter one requires the mapping of a group of images to a {\it{mean}} representative, which is an additional unknown of the problem, the subsequent goal being to extract a relevant hidden structure from this ensemble of images. As in medical images the variability between individuals is significant, constructing a meaningful statistical image of the global underlying anatomy of an organ from a set of images to measure this variableness is of great interest. It allows for the derivation of image statistics, the retrieval of the inherent dynamics of a single individual's organ, the estimation of the probability that a particular spatial location takes on a certain label, the detection and quantisation of abnormalities, that is, more generally, it allows to characterise and understand how geometrical and structural changes influence health. A large body of papers feeds the field of atlas generation and shape statistics among which \cite{debroux-bib:joshi} (atlas generation problem phrased in the Large Deformation Diffeomorphic Metric Mapping (LDDMM) framework \cite{debroux-bib:beg}), \cite{debroux-bib:papadakis2} (the shapes to be analysed are modeled as random histograms and in order to learn principal modes of variation from such data, the Wasserstein distance between probability measures is introduced), \cite{debroux-bib:zhang} (dedicated to elastic shape analysis; a unified registration/parameterised object statistical analysis framework is tackled, based on square-root transformations and able to process data as diverse as curves, functions, surfaces and images), \cite{debroux-bib:arsigny} (statistics performed on the space of diffeomorphisms), \cite{debroux-bib:hong} (the use of a kernel descriptor that characterises local shape properties ensures geometrically meaningful correspondence between shapes with statistical studies of the deformations), \cite{debroux-bib:rumpf_wirth,debroux-bib:Rumpf2011} (the shapes are viewed as closed contours approximated by phase fields, and shape averaging and covariance analysis are carried out in a nonlinear elasticity setting),  to name a few.\newline

The difficulty in designing the model arises from the complexity of the formulation that is generally underconstrained, involves nonlinearity and non-convexity, and is dictated by the given application. While segmentation attempts to reproduce the ability of human beings to track down significant patterns and automatically gather them into significant constituents (see \cite[Chapter 4]{debroux-bib:aubert} or \cite[Part II]{debroux-bib:vese2015variational} for a relevant analysis of this problem), it remains a challenging and ill-posed task (as emphasised by Zhu et al. (\cite{debroux-bib:zhu1})) since the definition of an object encompasses various acceptations: it can be something material \textemdash a thing \textemdash or a periodic pattern, this heterogeneity entailing the design of suitable methodologies for each specific application. Similarly, for the registration assignment (see \cite{debroux-bib:modersitzki1,debroux-bib:modersitzki2,debroux-bib:sotiras} for the registration counterpart with Matlab software), the sought deformation is usually viewed as a minimal argument (uniqueness defaults in general) of a specifically tailored cost function that has a polymorphous character in nature. For images acquired on different devices and depicting various physical phenomena, the quality of registration is not measured by intensity distribution alignment, but by the degree of shape/geometrical feature pairing. Also, several stances can be adopted to describe the setting the objects to be matched fall within (physical models \textemdash \cite{debroux-bib:beg}, \cite{debroux-bib:broit}, \cite{debroux-bib:burger}, \cite{debroux-bib:thesis_christensen}, \cite{debroux-bib:davatzikos_elasticity}, \cite{debroux-bib:derfoul}, \cite{debroux-bib:droske}, \cite{debroux-bib:fischer2}, \cite{debroux-bib:leguyader1}, \cite{debroux-bib:ozere_gout_le_guyader}, \cite{debroux-bib:rumpf_wirth} \textemdash, purely geometric ones \textemdash \cite{debroux-bib:ashburner}, \cite{debroux-bib:davis}, \cite{debroux-bib:sederberg}, \cite{debroux-bib:zagorchev}
\textemdash, models including a priori knowledge (\cite{debroux-bib:clatz}), depending on the assumption regarding the
properties of the deformation to be recovered) and to devise the measure of alignment (that is, how the available data are exploited to drive the registration process), increasing thus the complexity of the problem.
To meet these criteria, we devise, in a variational framework, a theoretically well-motivated and physically relevant combined model, capable of handling large deformations, reliable in terms of pairing of the \textcolor{black}{shapes} encoded in the images, and efficient in extracting a relevant underlying structure \textcolor{black}{ decomposed into shapes} from the considered set of images. Statistical shape analysis is then performed by means of a Principal Component Analysis (PCA) on the obtained deformations to retrieve the main modes of variations inside the dataset, after finding a suitable representative of the deformation in a linear space (i.e. in order that the recovered deformation lives in a vector space). \newline

The results are obtained through the use of the hyperelasticity setting and the design of an original geometric dissimilarity measure ensuring alignment of the {\textcolor{black}{(possibly nested)}} shapes for the combined model \textemdash {\textcolor{black}{thus favouring the matching of shapes rather than the coupling of grey levels with the underlying goal to potentially process images with different modalities}}\textemdash, and the introduction of a tensor-based approximation problem for the statistical analysis. Unlike \cite{debroux-bib:rumpf_wirth}, the shapes to be matched are not modeled by their closed contour but through a piecewise-constant partition (Potts model \cite{debroux-bib:MumfShah89,debroux-bib:potts}), which constitutes the main difference with \cite{debroux-bib:rumpf_wirth}. {\textcolor{black}{Not only does the shape pairing rely on the object outer envelope matching, but also on the internal structure alignment}}. This way of looking at shapes entails substantial modifications in the design of the functional to be minimised and in the search for an appropriate representative of the deformation in a vector space.\newline

More precisely, the novelty of the paper rests upon: (i) an original modeling involving the stored energy function of an Ogden material complemented by two new hard constraints on the Jacobian and its inverse (in addition to the theoretical utility of these constraints, it also allows to control changes of length), \textcolor{black}{the} Potts model for segmentation, and an original discrepancy measure ensuring edge mapping; (ii) the derivation of theoretical results encompassing non straightforward mathematical tools; (iii) the analysis and comparison of three different methods to perform statistical analysis on the obtained deformation: the first one, based on linearised elasticity principles largely inspired by \cite{debroux-bib:Rumpf2011}, the second one using the Cauchy-stress tensors motivated by \cite{debroux-bib:Rumpf2011}, and the last one, more novel \textcolor{black}{and on which the paper focuses}, relying on tensor-based smoothing D$^m$ splines, influenced by \cite{debroux-bib:le-guyader-apprato}.
Section \ref{sec:mathematical_modeling} is devoted to the analysis of the mathematical model including a theorem of existence of minimisers,  while Section \ref{sec:numerical_method_resolution} is dedicated to the theoretical analysis of a numerical method of resolution based on a splitting approach and {\textcolor{black}{implying}} \textcolor{black}{the Alternating Direction Method of Multipliers (ADMM) techniques and proximal gradient descent algorithms}. Section \ref{sec:representation_deformation_linear_space_PCA} deals with the resulting geometry-driven statistical analysis, which requires finding a fitting representation of the obtained deformation in a linear space before performing a PCA. As already mentioned, our motivation is to investigate how the linear elasticity based approach and the Cauchy-stress based method compare to the D$^m$ splines approximation based procedure. The first two are non-straightforward adaptations of the techniques envisioned in \cite{debroux-bib:Rumpf2011}, but the view we take to model the shapes \textemdash phase field rather than closed contours \textemdash implies substantial changes in the physics of the problem, while the emphasis is put on the last one for its novelty. 
Section \ref{sec:numerical_simulations} focuses on numerical simulations with a thorough comparison between sequential treatments and the proposed joint model, demonstrating the ability of our model to handle large deformations and to produce in the end, an atlas with sharp edges, high contrast and reflecting a realistic shape.\newline
Let us emphasise that the focus of the paper is on the mathematical presentation of a nonlinear elasticity-based unified segmentation\textcolor{black}{, }registration\textcolor{black}{, and }atlas generation model. Hence, the computational results are currently still restricted to two dimensions {\textcolor{black}{due in practice to the applications that were presented to us by clinicians}}. However, as will be seen next, the proposed algorithm can be easily adapted to the three-dimensional case.
\section{Mathematical Modeling}\label{sec:mathematical_modeling}
\subsection{Depiction of the Model}\label{sub-sec:depiction_model}
\label{debroux-le-guyader-sec:depiction_model}
Let $\Omega$ be a connected bounded open subset of $\mathbb{R}^3$ with boundary sufficiently smooth (convenient way of saying that in a given definition, the smoothness of the boundary is such that all arguments make sense {\textcolor{black}{and enabling us to use compact Sobolev embeddings among others}}). Let us denote by $T_i: \bar{\Omega}\rightarrow \mathbb{R}$ the $i$-th template image with $i=1,\cdots,M$ {\textcolor{black}{\textemdash available data in our problem \textemdash}}, $M$ being the total number of initial images. For theoretical and numerical purposes, we assume that each $T_i$ is compactly supported on $\Omega$ to ensure that $T_i\circ\varphi_i$ is always defined and we assume that $T_i$ is Lipschitz continuous. It can thus be considered as an element of the Sobolev space $W^{1,\infty}(\mathbb{R}^3)$, and the chain rule applies. {\textcolor{black}{The partitioning of each template $T_i$ into regions with homogeneous intensities\textcolor{black}{, defining shapes,}} is encoded in the variable $\theta_i : \bar{\Omega} \rightarrow \mathbb{R}$ {\textcolor{black}{\textemdash the variables $\left\{\theta_i\right\}_{i=1}^{M}$ belonging to the set of unknowns of the problem and being read as simplified versions of the images $T_i$ that encompass the geometrical \textcolor{black}{shapes} \textemdash}}, and $\theta_R : \bar{\Omega}\rightarrow \mathbb{R}$ is the unknown segmented atlas generated by our model. {\textcolor{black}{As will be seen later, these variables allow making the connection between segmentation and registration. Also, using these schematic versions of the images tends to favour shape pairing rather than grey level mapping}}. Let $\varphi_i : \bar{\Omega}\rightarrow \mathbb{R}^3$ be the sought deformation between $\theta_{T_i}$ and the unknown mean segmentation $\theta_R$. Of course, in practice, $\varphi_i$ should be with values in $\bar{\Omega}$ but from a mathematical point of view, if we work with such spaces of functions, we lose the structure of vector space. Nonetheless, we can show that our model retrieves deformations with values  in $\bar{\Omega}$ \textcolor{black}{\textemdash}  based on Ball's results~\cite{debroux-bib:ball}. A deformation is a smooth mapping that is orientation-preserving and injective, except possibly on $\partial \Omega$, if self-contact is allowed. The deformation gradient is $\nabla \varphi_i:\, \bar{\Omega} \rightarrow M_3(\mathbb{R})$, the set $M_3(\mathbb{R})$ being the set of real square matrices of order $3$. The associated displacement field is denoted by $u_i$ such that $\varphi_i=\mathrm{Id}+u_i$, and $\nabla \varphi_i=I_3+\nabla u_i$, with $\mathrm{Id}$, the identity mapping, and $I_3$, the $3\times3$ identity matrix. We also need the following notations: $A:B={\mathrm{tr}}A^TB$, the matrix inner product and $||A||={\sqrt{A:A}}$, the related matrix norm (Frobenius norm).\newline

Following the joint model philosophy in a variational framework, the sought deformations $\varphi_i$, the segmentations $\theta_{T_i}$, for all $i=1,\cdots,M$, as well as the segmented atlas $\theta_R$ are seen as {\textcolor{black}{minimal arguments}} of a specifically designed cost function. It comprises a regularisation on $\varphi_i$, for all $i=1,\cdots,M$, prescribing the nature of the deformations, a {\textcolor{black}{penalisation}} on $\theta_{T_i}$, for all $i=1,\cdots,M$, and $\theta_R$, favouring piecewise constant segmentations, a segmentation fidelity term ensuring the closeness of the $\theta_{T_i}$ to the initial available image $T_i$, and a data-driven term measuring \textcolor{black}{the} alignment between the deformed segmentation{\textcolor{black}{s}} {\textcolor{black}{$\left\{\theta_{T_i}\circ \varphi_i\right\}_{i=1}^{M}$ and $\theta_R$}}, intertwining then segmentation and registration.\newline

In this work, we {\textcolor{black}{view}} all the {\textcolor{black}{template}} images and their respective {\textcolor{black}{partitioning}} as deformed versions of a single image/segmentation. Inspired by the observation in \cite{debroux-bib:rumpf_wirth}: {\textit{"the arithmetic mean $x$ of observations \textcolor{black}{$\{x_i\}_{i=1}^{M}$} can be interpreted as the minimiser of the total elastic deformation energy in a system where the average $x$ is connected to each $x_i$ by an elastic spring, under the Hooke's law"}}, a natural choice for
the definition of the mean segmentation is given by the particular deformed
configuration that minimises the total nonlinear hyperelastic deformation energy
required to align each segmentation to this mean configuration. To allow large deformations, the shapes to be matched are viewed as isotropic (exhibiting the same mechanical properties in every direction), homogeneous (showing the same behaviour everywhere inside the material), and hyperelastic (with a stress-strain relation derived from a strain energy density) materials, and more precisely as Ogden ones (\cite{debroux-bib:ciarlet}). Note that rubber, filled elastomers, and biological tissues are often modeled within the hyperelastic framework, which motivates our modeling. This perspective drives the design of the regularisation on the deformations $\varphi_i$ which is thus based on the stored energy function of an Ogden material, prescribing then a physically-meaningful nature.\newline

We recall that the general expression for the stored energy function of an Ogden material (see \cite{debroux-bib:ciarlet}\cite{debroux-bib:ledret}) is given by 
\begin{align*}
    W_O(F)=\underset{i=1}{\overset{K_1}{\sum}} a_i\|F\|^{\gamma_i} + \underset{j=1}{\overset{K_2}{\sum}}b_i\|\mathrm{Cof}F\|^{\beta_j}+\Gamma(\mathrm{det}F),
\end{align*}
with $a_i>0$, $b_i>0$, $\gamma_j\geq 0$, $\beta_j\geq 0$, for all $i=1,\cdots,K_1$ and all $j=1,\cdots,K_2$, and $\Gamma\, :\, ]0,+\infty[ \rightarrow \mathbb{R} $ being a convex function satisfying $\underset{\delta \rightarrow 0^+}{\lim}\Gamma(\delta)=\underset{\delta \rightarrow +\infty}{\lim} \Gamma(\delta)=+\infty$. The first term penalises changes in length, the second one controls the changes in area {\textcolor{black}{while}} the third one restricts the changes in volume. The latter also ensures preservation of topology by imposing  {\textcolor{black}{positivity}} of the Jacobian determinant {\textcolor{black}{almost everywhere}}. In this work, we focus on the following particular energy: 
\begin{align*}
    &W_{Op}(F) = \\
    &\left\{ \begin{array}{l} a_1\|F\|^4+a_2\|\mathrm{Cof}F\|^4+a_3(\mathrm{det}F-1)^2+\frac{a_4}{{\textcolor{black}{\left(\mathrm{det}F\right)^{10}}}}-3(a_1+a_2)-a_4 \text{ if } \mathrm{det}F>0,\\ +\infty \text{ otherwise,}\end{array} \right.
\end{align*}
fulfilling the previous assumptions. The third and fourth terms govern the distribution of the Jacobian determinant~: the latter prevents singularities and large contractions by penalising small values of the determinant, while the former promotes values of the determinant close to $1$ avoiding {\textcolor{black}{thus}} expansions and contractions that are too large. 
The choice of the remaining terms is motivated by the theoretical results in \cite{debroux-bib:ball} to ensure that the deformations are homeomorphisms. The constants are added to fulfill the energy property $W_{Op}(I_3)=0$. In order to avoid singularity as much as possible, to get deformations that are bi-Lipschitz homeomorphisms, and to obtain Cauchy-stress tensors (whose formal definition will be given in Section \ref{sec:representation_deformation_linear_space_PCA}) in the linear space $L^2(\Omega,{\textcolor{black}{M_{3}}}(\mathbb{R}))$, we complement this stored energy function $W_{Op}$ by the term $\mathds{1}_{\{\|.\|_{L^\infty(\Omega,M_3(\mathbb{R}))}\leq \alpha\}}(F)+\mathds{1}_{\{\|.\|_{L^\infty(\Omega,M_3(\mathbb{R}))}\leq \beta\}}(F^{-1})$, with $\alpha\geq 1$, and $\beta \geq 1$, where $\mathds{1}_A$ denotes the convex characteristic function of a convex set $A$. Therefore, the regularisation can be written as
\begin{align*}
    W(F) = \int_\Omega W_{Op}(F)\,dx + \mathds{1}_{\{\|.\|_{\{L^\infty(\Omega,M_3(\mathbb{R}))}\leq \alpha\}}(F)+\mathds{1}_{\{\|.\|_{L^\infty(\Omega,M_3(\mathbb{R}))}\leq \beta\}}(F^{-1}).
\end{align*}
{\textcolor{black}{\begin{remark}
In terms of functional spaces, if $\varphi \in W^{1,\infty}(\Omega,\mathbb{R}^3)$ (suitable space owing to the $L^{\infty}$ hard constraints), $\mathrm{Cof}\,\nabla \varphi$ and $\mathrm{det}\,\nabla \varphi$ are automatically elements of $L^{\infty}(\Omega,M_{3}(\mathbb{R}))$ and $L^{\infty}(\Omega)$ respectively, since $L^{\infty}(\Omega,\mathbb{R}^3)$ has a structure of commutative Banach algebra. Penalising the $L^{\infty}$ norm of $\nabla \varphi$ thus entails control over the Jacobian determinant. \textcolor{black}{This additional term implicitly gives an upper and lower bound on the Jabobian determinant ensuring thus topology preservation.} 
\end{remark}}}
The aforementioned regulariser is then applied along {\textcolor{black}{with}} a discrepancy measure, which allows {\textcolor{black}{intertwining}} the segmentation and registration tasks, and a segmentation part comprising a fidelity term and a sparsity measure on the {\textcolor{black}{paired}} edges based on the Potts model (\cite{debroux-bib:potts}). The latter, also known as piecewise-constant Mumford-Shah model \cite{debroux-bib:MumfShah89} with $N\in \mathbb{N}$ phases\textcolor{black}{/shapes} {\textcolor{black}{($N$ is thus a prior)}}, is written, for an observed image $f$, as
\begin{align*}
    \underset{u \in \mathcal{U}}{\inf} E_{Potts}(u)=\underset{ l=1}{\overset{N}{\sum}} \frac{\alpha}{2} TV(u_l) + \int_\Omega \underset{l=1}{\overset{N}{\sum}} u_l(c_l-f)^2\,dx,
\end{align*}
with 
$\mathcal{U} =\{{\textcolor{black}{u=(u_l)_{l=1,\cdots,N}\in \left(BV(\Omega,\{0,1\})\right)^N,\, \underset{l=1}{\overset{N}{\sum}} u_l=1,\, a.e.\, on\, \Omega}}\},\,{\mbox{and}}\,\newline c_l = \left\{ \begin{array}{cc} \frac{\int_\Omega fu_l\,dx}{\int_\Omega u_l\,dx} &\text{ if } \, \int_\Omega u_l\,dx \neq 0, \\ 0 &\text{ otherwise,}\end{array}\right.$, $\alpha>0$ {\textcolor{black}{being}} a weighting parameter {\textcolor{black}{balancing}} the fidelity term and the regularisation. The notation $TV$ denotes the classical Total Variation, measuring the perimeter length of the set defined by $\{x\in \Omega \, |\, u_l(x)=1\}$ thanks to the coarea formula (\cite{debroux-bib:demengel,debroux-bib:evans}). The segmentation/{\textcolor{black}{partitioning}} is then retrieved by $\tilde u = \underset{l=1}{\overset{N}{\sum}} u_lc_l$, {\textcolor{black}{$\tilde u$}} being a decomposition of the initial image $f$ into $N$ \textcolor{black}{shapes} defined by the characteristic function{\textcolor{black}{s}} $u_l$ with constant intensity value{\textcolor{black}{s}} $c_l$, each one corresponding to an object of interest under the assumption that it is defined by a homogeneous region with close intensity values. 
\begin{remark}\label{debroux:remark1}Extensions to homogeneous regions in terms of texture with a piecewise-smooth approximation instead of a piecewise-constant approximation (see \cite{debroux-bib:vese2015variational}) or in terms of histograms (see \cite{debroux-bib:papadakis1}) are possible, depending on the nature of the considered images, but this is not the scope of this paper. 
\end{remark}
The characteristic functions $u_l$ give a good representation of the geometric features inside the images, and can be seen as nonlocal shape descriptors that will help the registration process. In that prospect, we introduce this novel geometric dissimilarity measure whose aim is to align the salient structures based on the previous decomposition without taking into account the intensity values {\textcolor{black}{\textemdash thus favouring shape pairing \textemdash }}: 
\begin{align*}
    E_{\text{dist}}((\theta_{T_i},\varphi_i)_{i=1,\cdots,M},\theta_R)=\frac{1}{2M}\underset{i=1}{\overset{M}{\sum}}\underset{l=1}{\overset{N}{\sum}}TV(\theta_{T_i,l}\circ \varphi_i-\theta_{R,l}),
\end{align*}
{\textcolor{black}{with notations consistent with the definition of $\mathcal{U}$, i.e. $\forall i \in \left\{1,\cdots,M\right\}$, $\theta_{T_i}=\left(\theta_{T_i,l}\right)_{l=1,\cdots,N} \in \left(BV(\Omega,\left\{0,1\right\})\right)^N$ and $\theta_{R}=\left(\theta_{R,l}\right)_{l=1,\cdots,N} \in \left(BV(\Omega,\left\{0,1\right\})\right)^N$}}.
{\textcolor{black}{
\begin{remark}\label{debroux:remark2}Consistently with Remark \ref{debroux:remark1}, we could also envision a model \textcolor{black}{including }
both the deformations $\varphi_i$ pairing the structures ({\it{i.e.}} viewed as global deformations) and additional components reflecting better the more local deformations. \textcolor{black}{This results} mathematically in a composition of deformations. Again, this is not the scope of the proposed work.
\end{remark}
}}
It thus allows for the registration of images acquired through different mechanisms and is more robust to small changes of intensities that can happen even for images of the same modality, especially in medical images. It measures the perimeter length of the misaligne\textcolor{black}{d} region for each structure of interest and thus drives the registration process by mapping the \textcolor{black}{shapes}.\newline

In the end, the overall problem denoted by \ref{initial_problem} is stated by
 \begin{align}
 \nonumber \inf\, \mathcal{F}_1(\theta_R,\textcolor{black}{\{\theta_{T_i},\varphi_i\}_{i=1}^{M}}) &= \frac{1}{M}\underset{i=1}{\overset{M}{\sum}} \bigg( \frac{\gamma_T}{2} \underset{l=1}{\overset{N}{\sum}}TV(\theta_{T_i,l}) + \int_\Omega \underset{l=1}{\overset{N}{\sum}}\theta_{T_i,l}(c_{T_i,l}-T_i)^2\,dx \\
\nonumber &+ \frac{\gamma_R}{2}\underset{l=1}{\overset{N}{\sum}}TV(\theta_{R,l}) 
 + \int_\Omega \underset{l=1}{\overset{N}{\sum}}\theta_{R,l}(c_{R,l}-T_i\circ \varphi_i)^2\,dx  \\
\nonumber &+ \frac{\lambda}{2} \underset{l=1}{\overset{N}{\sum}}TV(\theta_{T_i,l}\circ \varphi_i - \theta_{R,l})+ W(\nabla \varphi_i)\bigg),
\tag{P}
\label{initial_problem}
 \end{align}
with $c_{T_i,l}=\left\{ \begin{array}{l} \frac{\int_\Omega \theta_{T_i,l}(x)T_i(x)\,dx}{\int_\Omega \theta_{T_i,l}(x)\,dx}\text{ if } \int_\Omega \theta_{T_i,l}(x)\,dx \neq 0\\ 0 \text{ otherwise }\end{array} \right.$,\\ $c_{R,l}=\left\{ \begin{array}{l} \frac{1}{M}\underset{i=1}{\overset{M}{\sum}}\frac{\int_\Omega \theta_{R,l}(x)T_i\circ \varphi_i(x)\,dx}{\int_\Omega \theta_{R,l}(x)\,dx}\text{ if } \int_\Omega \theta_{R,l}(x)\,dx \neq 0\\ 0 \text{ otherwise }\end{array} \right.$, $\alpha\geq 1$ and $\beta \geq 1$.\newline
\textcolor{black}{An illustration of the overall components of the model as well as the pipeline of the resulting analysis is given in Figure \ref{figure:teaser}.}
\begin{figure}[H]
    \centering
    \includegraphics[width=\linewidth]{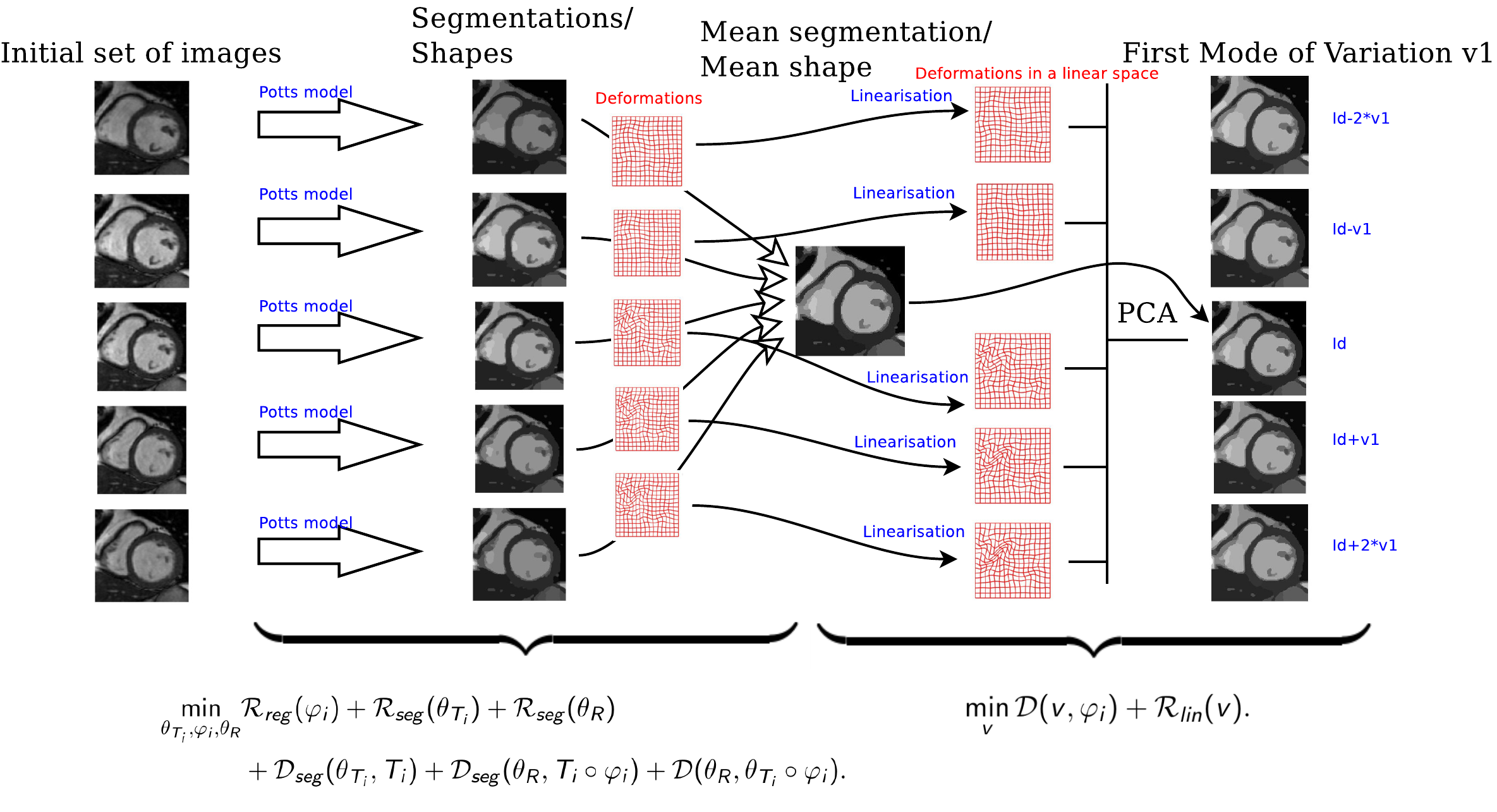}
    \caption{Overview of our framework}
    \label{figure:teaser}
\end{figure}
\subsection{Theoretical Results}\label{sub-sec:theoretical_results}
In this subsection, we theoretically analyse problem~\ref{initial_problem} by showing its well-definedness. In that purpose, we prove the existence of minimisers in the following theorem. 
\begin{theorem}[Existence of minimisers.]\newline 
\label{thm:existence_minimizers}
We introduce the functional space: 
\begin{itemize}
 \item $\hat{\mathcal{W}} = \{\psi \in \mathrm{Id}+W_0^{1,\infty}(\Omega,\mathbb{R}^3),\, \frac{1}{\mathrm{det} \nabla \psi}\in L^{10}(\Omega),\, \det {\textcolor{black}{\nabla}}\psi>0 \text{ a.e. in } \Omega,$\\$ \|\nabla \psi\|_{L^\infty(\Omega,M_3(\mathbb{R}))}$ $\leq \alpha, \|(\nabla \psi)^{-1}\|_{L^\infty(\Omega,M_3(\mathbb{R}))}\leq \beta\}$,
\end{itemize}
The infimum is searched for $\theta_R\in{\textcolor{black}{\mathcal{U}}}$, $\theta_{T_i}\in {\textcolor{black}{\mathcal{U}}}$, and $\varphi_i\in \hat{\mathcal{W}}$ for all $i\in \{1,\cdots,M\}$ such that $\theta_{T_i,l}\circ \varphi_i - \theta_{R,l} \in BV(\Omega)$ for all $l\in \{1,\cdots,N\}$ and for all $i\in \{1,\cdots,M\}$. 
There exists {\textcolor{black}{at least one minimiser}} to this problem.
\end{theorem}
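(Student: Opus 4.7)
The plan is to apply the direct method of the calculus of variations. A first observation is that $\mathcal F_1$ is bounded below: the total variations and the data fidelity integrals are non-negative, the indicator functions vanish on admissible configurations, and $W_{Op}(F)\geq -3(a_1+a_2)-a_4$ by inspection. Pick a minimising sequence $(\theta_R^n,\{\theta_{T_i}^n,\varphi_i^n\})$. The uniform bound on $\mathcal F_1$ yields the following a priori estimates: $TV(\theta_{T_i,l}^n)$ and $TV(\theta_{R,l}^n)$ remain bounded for all $i,l$; each $\varphi_i^n\in\hat{\mathcal W}$ so $\|\nabla\varphi_i^n\|_{L^\infty}\leq\alpha$ and $\|(\nabla\varphi_i^n)^{-1}\|_{L^\infty}\leq\beta$; and by the Hadamard inequality $1/\det\nabla\varphi_i^n$ is bounded by $\beta^3$ in $L^\infty(\Omega)$, which is stronger than the $L^{10}$ requirement.

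Next I extract convergent subsequences. By compactness of $BV\cap L^\infty$ in $L^1$ and a diagonal extraction, $\theta_{T_i,l}^n\to\theta_{T_i,l}^\infty$ and $\theta_{R,l}^n\to\theta_{R,l}^\infty$ in $L^1(\Omega)$, and along a further subsequence pointwise almost everywhere; the limits therefore remain in $\{0,1\}$ a.e.\ and the partition-of-unity constraint $\sum_l\theta_{\cdot,l}=1$ survives, so the limits lie in $\mathcal U$. For the deformations, the uniform Lipschitz bound together with the homogeneous boundary condition $\varphi_i^n-\mathrm{Id}\in W_0^{1,\infty}$ give, via Arzel\`a--Ascoli, uniform convergence $\varphi_i^n\to\varphi_i^\infty$ on $\overline\Omega$, together with $\nabla\varphi_i^n\overset{\ast}{\rightharpoonup}\nabla\varphi_i^\infty$ weakly-$\ast$ in $L^\infty(\Omega,M_3(\mathbb R))$. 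Since the family is uniformly bi-Lipschitz, so is the uniform limit $\varphi_i^\infty$, and by Rademacher $\nabla\varphi_i^\infty$ is invertible a.e.\ with $\|(\nabla\varphi_i^\infty)^{-1}\|_{L^\infty}\leq\beta$, hence $\det\nabla\varphi_i^\infty\geq 1/\beta^3>0$ a.e., placing $\varphi_i^\infty$ in $\hat{\mathcal W}$.

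The passage to the limit then proceeds term by term. The total variations of $\theta_{T_i,l}^n$ and $\theta_{R,l}^n$ are lower semicontinuous under $L^1$ convergence. Polyconvexity of $W_{Op}$ in the triple $(F,\mathrm{Cof}\,F,\det F)$ combined with the classical minor-continuity results of Ball (the weak-$\ast$ convergence of cofactors and of determinants follows from $\nabla\varphi_i^n\overset{\ast}{\rightharpoonup}\nabla\varphi_i^\infty$ in $L^\infty$ via the standard div--curl argument) yields lower semicontinuity of $\int_\Omega W_{Op}(\nabla\varphi_i^n)\,dx$; the two $L^\infty$ indicator terms pass to the limit since the corresponding $L^\infty$ bounds survive weak-$\ast$ limits. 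The template fidelity rewrites, on the non-degenerate stratum, as $\int_\Omega\theta_{T_i,l}T_i^2\,dx-(\int_\Omega\theta_{T_i,l}T_i\,dx)^2/(\int_\Omega\theta_{T_i,l}\,dx)$, continuous in $\theta_{T_i,l}^n$ for the $L^1$ topology; the atlas fidelity is handled analogously after noting that $T_i\circ\varphi_i^n\to T_i\circ\varphi_i^\infty$ uniformly by Lipschitz continuity of $T_i$. For the geometric discrepancy $TV(\theta_{T_i,l}\circ\varphi_i-\theta_{R,l})$, one first shows $\theta_{T_i,l}^n\circ\varphi_i^n\to\theta_{T_i,l}^\infty\circ\varphi_i^\infty$ in $L^1(\Omega)$ and then invokes lower semicontinuity of the total variation.

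The principal obstacle is this last composition step, since $\theta^n$ converges only in $L^1$ while $\varphi^n$ converges only uniformly, neither mode on its own being strong enough. I would split by the triangle inequality $\|\theta^n\circ\varphi^n-\theta^\infty\circ\varphi^\infty\|_{L^1}\leq\|\theta^n\circ\varphi^n-\theta^\infty\circ\varphi^n\|_{L^1}+\|\theta^\infty\circ\varphi^n-\theta^\infty\circ\varphi^\infty\|_{L^1}$; the first piece is controlled by change of variables using the uniform bound $\|1/\det\nabla\varphi_i^n\|_{L^\infty}\leq\beta^3$, and the second by approximating the fixed limit $\theta^\infty\in BV(\Omega,\{0,1\})$ in $L^1$ by continuous functions and exploiting the uniform convergence of $\varphi^n$ with its bi-Lipschitz substitution. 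A secondary technicality concerns the degenerate stratum where some $\theta_{T_i,l}^\infty$ or $\theta_{R,l}^\infty$ has zero mass and the corresponding $c_{\cdot,l}$ is defined by convention as $0$; continuity there is recovered by observing that the fidelity integrand is dominated by $\theta_{\cdot,l}^n\|T_i\|_{L^\infty}^2$ and vanishes in the limit.
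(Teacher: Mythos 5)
Your overall architecture --- direct method, a priori bounds from the $L^\infty$ hard constraints and the $BV$ terms, compactness ($L^1$ for the phases, Arzel\`a--Ascoli plus weak-$*$ convergence of gradients for the deformations), Ball's weak continuity of minors for the polyconvex stored energy, term-by-term lower semicontinuity, and the two-step triangle-inequality argument for the composition $\theta_{T_i,l}\circ\varphi_i$ --- is the same as the paper's, and most steps are sound (the Hadamard observation $1/\det\nabla\varphi_i^n=\det\bigl((\nabla\varphi_i^n)^{-1}\bigr)\leq\beta^3$ is a good one). There is, however, one genuine gap: the admissibility of the limit with respect to the constraint $\|(\nabla\varphi_i^\infty)^{-1}\|_{L^\infty(\Omega,M_3(\mathbb{R}))}\leq\beta$. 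You derive it by claiming the family is uniformly bi-Lipschitz, that the uniform limit is then bi-Lipschitz, and that Rademacher bounds $(\nabla\varphi_i^\infty)^{-1}$. This chain is not airtight: the pointwise bound $\|(\nabla\varphi_i^n)^{-1}\|\leq\beta$ a.e.\ does not by itself yield a lower Lipschitz bound $|\varphi_i^n(x)-\varphi_i^n(y)|\geq c|x-y|$ --- that needs global injectivity (a consequence of Ball's global invertibility theorem, not of the pointwise constraint alone) together with a path argument in $\Omega$ --- and even granting it, the round trip through the bi-Lipschitz constant converts the Frobenius-norm bound into an operator-norm bound with constants depending on the geometry of $\Omega$, so you do not recover the exact threshold $\beta$. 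The limit could then fail to lie in $\hat{\mathcal{W}}$, making the indicator term $+\infty$ at the limit. The same optimism recurs when you assert that the second indicator ``passes to the limit since the $L^\infty$ bound survives weak-$*$ limits'': $F\mapsto F^{-1}$ is nonlinear, so weak-$*$ convergence of $\nabla\varphi_i^n$ does not transport this constraint directly.

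The repair is algebraic and uses exactly the minor-continuity you already invoke elsewhere. Since $\det\nabla\varphi_i^n>0$ a.e., the constraint $\|(\nabla\varphi_i^n)^{-1}\|\leq\beta$ a.e.\ is equivalent to $\|\mathrm{Cof}\,\nabla\varphi_i^n\|\leq\beta\,\det\nabla\varphi_i^n$ a.e. By Ball's results, $\mathrm{Cof}\,\nabla\varphi_i^n\overset{*}{\rightharpoonup}\mathrm{Cof}\,\nabla\varphi_i^\infty$ and $\det\nabla\varphi_i^n\overset{*}{\rightharpoonup}\det\nabla\varphi_i^\infty$; since $(C,d)\mapsto\|C\|-\beta d$ is convex, for every nonnegative $\phi\in L^1(\Omega)$ one gets $\int_\Omega\bigl(\|\mathrm{Cof}\,\nabla\varphi_i^\infty\|-\beta\det\nabla\varphi_i^\infty\bigr)\phi\,dx\leq\liminf_n\int_\Omega\bigl(\|\mathrm{Cof}\,\nabla\varphi_i^n\|-\beta\det\nabla\varphi_i^n\bigr)\phi\,dx\leq 0$, whence $\|\mathrm{Cof}\,\nabla\varphi_i^\infty\|\leq\beta\det\nabla\varphi_i^\infty$ a.e. Combined with $\det\nabla\varphi_i^n\geq\beta^{-3}$, which passes to the weak-$*$ limit because it is a linear inequality in $\det\nabla\varphi_i^n$, this gives $\det\nabla\varphi_i^\infty\geq\beta^{-3}>0$ a.e.\ and $\|(\nabla\varphi_i^\infty)^{-1}\|\leq\beta$ a.e., i.e.\ $\varphi_i^\infty\in\hat{\mathcal{W}}$ with the exact constants. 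With that step replaced, the rest of your argument goes through.
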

\begin{proof}
The proof is based on the theory of the calculus of variations, and relies on Ball's results \cite{debroux-bib:ball} and arguments inspired by \cite{debroux-bib:benedikt_wirth}. See Section 1 of the supplementary material for the detailed proof.
\end{proof}
We now investigate an original numerical method for the resolution of problem \ref{initial_problem}.
\section{Numerical Method of Resolution}\label{sec:numerical_method_resolution}
\subsection{Description and Analysis of the Numerical Method}\label{sub-sec:description_analysis_numerical_method}
Inspired by a prior work by Negr\'on Marrero \cite{debroux-bib:negron} in which {\textcolor{black}{the author}} describes and analyses a numerical method detecting singular minimisers and avoiding the Lavrentiev phenomenon for 3D problems in nonlinear elasticity, we introduce auxiliary variables {\textcolor{black}{and split the original problem into sub-problems that are \textcolor{black}{computationally more tractable}}}. The idea of Marrero's work is to decouple the deformation $\varphi$ from its gradient $\nabla \varphi$ and to formulate a related decoupled problem under equality constraints, moving thus the nonlinearity in the Jacobian to this new variable. With this in mind, we introduce the following auxiliary variables: $V_i$ simulating the Jacobian of $\varphi_i$ for each $i$, $W_i$ simulating the inverse Jacobian $(\nabla \varphi_i)^{-1}$ for all $i$, {\textcolor{black}{and $\forall i \in \left\{1,\cdots,M\right\}$, $\forall l\in\left\{1,\cdots,N\right\}$,  $\theta_{\tilde T_i,l}=\theta_{T_i,l}\circ \varphi_i-\theta_{R,l}$}}, 
to simplify numerical computations, and derive a functional minimisation problem phrased in terms of {\textcolor{black}{$(\theta_{T_i},\,\varphi_i,\,V_i,\,W_i,\,\theta_{\tilde T_i})_{i=1,\cdots,M}$,\,$\theta_{R}$}}. However, we do not impose {\textcolor{black}{equality constraints}} as in \cite{debroux-bib:negron}, but integrate instead, $L^p$-type penalisations ($p=1$ or $p=2$; the choice for the $L^1$-penalisation will be discussed later) into the functional,  partially relaxing a constrained problem under both equality and inequality constraints by a problem under inequality constraints only. The decoupled problem is thus defined by means of the following functional: 
\begin{align}
  \nonumber \inf \Bigg\{ &\mathcal{F}_{1,\gamma}({\textcolor{black}{\{\varphi_i,\theta_{T_i},V_i,W_i\}_{i=1}^{M},(\theta_{\tilde T_i,l})_{i=1,\cdots,M \atop l=1,\cdots,N},\theta_R}})= \frac{1}{M}\underset{i=1}{\overset{M}{\sum}}\Big(\frac{\gamma_T}{2} \underset{l=1}{\overset{N}{\sum}} TV(\theta_{T_i,l}) \nonumber \\
  &+ \int_\Omega \underset{l=1}{\overset{N}{\sum}} \theta_{T_i,l}({\textcolor{black}{c_{T_i,l}}}-T_i)^2\,dx
  + \frac{\gamma_R}{2} \underset{l=1}{\overset{N}{\sum}} TV(\theta_{R,l}) + \int_\Omega \underset{l=1}{\overset{N}{\sum}} \theta_{R,l} ({\textcolor{black}{c_{R,l}}}-T_i\circ \varphi_i)^2\,dx\nonumber  \\
  \nonumber &+ \frac{\lambda}{2}\underset{l=1}{\overset{N}{\sum}} TV(\theta_{\tilde T_i,l}) +\gamma \int_\Omega \underset{l=1}{\overset{N}{\sum}} |\theta_{\tilde T_i,l} - (\theta_{T_i,l}\circ \varphi_i - \theta_{R,l})|\,dx+ \int_\Omega W_{Op}({\textcolor{black}{V_i}})\,dx\\
 \nonumber & + \frac{\gamma}{4}\|V_i-\nabla \varphi_i\|_{L^4(\Omega,M_3(\mathbb{R}))}^4+\mathds{1}_{\{\|.\|_{L^\infty(\Omega,M_3(\mathbb{R}))}\leq \alpha\}}(V_i)+\mathds{1}_{\{\|.\|_{L^\infty(\Omega,M_3(\mathbb{R}))}\leq \beta\}}(W_i)\\
  &+\frac{\gamma}{2}\|W_i-V_i^{-1}\|_{L^2(\Omega,M_3(\mathbb{R}))}^2\Big)\Bigg\}, \tag{DP} \label{decoupled_problem}
 \end{align}
with $\forall l \in \left\{1,\cdots,N\right\}$, $c_{{\textcolor{black}{T_i,l}}} = \left\{ \begin{array}{cc} \frac{\int_\Omega \theta_{T_i,l}(x)T_i(x)\,dx}{\int_\Omega \theta_{T_i,l}(x)\,dx}&\text{ if } \int_\Omega \theta_{T_i,l}(x)\,dx \neq 0 \\ 0 &\text{ otherwise} \end{array} \right.$,\\ $c_{{\textcolor{black}{R,l}}} = \left\{ \begin{array}{cc}\frac{1}{M}{\underset{i=1}{\overset{M}{\sum}}} \frac{\int_\Omega \theta_{R,l}(x) T_i\circ \varphi_i(x)\,dx }{\int_\Omega \theta_{R,l}(x)\,dx}&\text{ if } \int_\Omega \theta_{R,l}(x)\,dx \neq 0 \\ 0 &\text{ otherwise }\end{array}\right.$, $\alpha\geq1$ and $\beta\geq1$. We address this problem for $\varphi_i \in \mathrm{Id}+W^{1,4}_0(\Omega,\mathbb{R}^3)$, 
{\textcolor{black}{$V_i\in \{\xi \in L^{\infty}(\Omega,M_3(\mathbb{R}))\, |\, \mathrm{det}\xi >0 \text{ a.e. on }\Omega, \frac{1}{\mathrm{det}\xi}\in L^{10}(\Omega), \|\xi\|_{L^\infty(\Omega,M_3(\mathbb{R}))}\leq \alpha \}$}}, $W_i\in \{\xi \in L^2(\Omega,M_3(\mathbb{R}))\, |\,  \|\xi\|_{L^\infty(\Omega,M_3(\mathbb{R}))}\leq \beta\}$, $\theta_{T_i} \in {\textcolor{black}{\mathcal{U}}}$ such that $\theta_{T_i,l}\circ \varphi_i \in L^1(\Omega)$ for all $l\in \{1,\cdots,N\}$ \textcolor{black}{and for all $i\in\{1,\cdots,M\}$}, ${\textcolor{black}{\theta_{\tilde T_i,l}\in  BV(\Omega,\{-1,0,}}$ ${\textcolor{black}{1\})}}$  for all $l\in \{1,\cdots,N\}$ \textcolor{black}{and for all $i\in \{1,\cdots,M\}$}, and $\theta_R \in {\textcolor{black}{\mathcal{U}}}$.
\subsection{Theoretical Results}\label{sub-sec:theoretical_result}
In this subsection, we theoretically analyse problem~\ref{decoupled_problem} and show an asymptotic result relating the decoupled problem~\ref{decoupled_problem} to the initial problem~\ref{initial_problem}.
\begin{theorem}[Asymptotic result]
Let $(\gamma_j)_{j\geq 0}$ be an increasing sequence of positive real numbers such that $\underset{j\rightarrow +\infty}{\lim} \gamma_j = +\infty$. Let {\textcolor{black}{$(\{\varphi_{i,k_j},\theta_{T_i,k_j},V_{i,k_j},W_{i,k_j}\}_{i=1}^{M},$ $\left(\theta_{\tilde T_i,l,k_j}\right)_{i=1,\cdots,M \atop l=1,\cdots,N},\theta_{R,k_j})$}} be a minimising sequence of the problem $\mathcal{F}_{1,\gamma}$ for $\gamma=\gamma_j$. Then there exists a subsequence such that $\varphi_{i,k_j}\underset{j\rightarrow +\infty}{\overset{W^{1,4}(\Omega,\mathbb{R}^3)}{\rightharpoonup}}\bar{\varphi}_i$, $\theta_{T_i,k_j} \underset{j\rightarrow +\infty}{\overset{(L^1(\Omega))^{{\textcolor{black}{{N}}}}}{\longrightarrow}} \bar{\theta}_{T_i}$, $\theta_{R,k_j} \underset{j \rightarrow +\infty}{\overset{(L^1(\Omega))^{\textcolor{black}{{N}}}}{\longrightarrow}} \bar{\theta}_{R}$, $V_{i,k_j} \underset{j\rightarrow +\infty}{\overset{*}{\rightharpoonup}} \nabla \bar{\varphi}_i$ in $L^\infty(\Omega,M_3(\mathbb{R}))$, $W_{i,k_j} \underset{j\rightarrow +\infty}{\overset{*}{\rightharpoonup}} (\nabla \bar{\varphi}_i)^{-1} $ in $L^\infty(\Omega,M_3(\mathbb{R}))$, $\theta_{\tilde T_i,l,k_j} \underset{j \rightarrow +\infty}{\overset{L^1(\Omega)}{\longrightarrow}} \bar{\theta}_{T_i,l}\circ\bar{\varphi}_i - \bar{\theta}_{R,l}$, for all $l=1,\cdots,N$, for all $i=1,\cdots,M$, and $\underset{j\rightarrow +\infty}{\lim} \mathcal{F}_{1,\gamma_j}$(${\textcolor{black}{\{\varphi_{i,k_j},\theta_{T_i,k_j},V_{i,k_j},W_{i,k_j}\}_{i=1}^{M},}}$ ${\textcolor{black}{\left(\theta_{\tilde T_i,l,k_j}\right)_{i=1,\cdots,M \atop l=1,\cdots,N},\theta_{R,k_j})}}={\textcolor{black}{\mathcal{F}_1(\bar{\theta}_R,\{\bar{\theta}_{T_i},}}$ ${\textcolor{black}{\bar{\varphi}_i\}_{i=1}^{M}) = \inf \mathcal{F}_1}}$, so that $(\bar{\theta}_R,\{\bar{\theta}_{T_i},$ $\bar{\varphi}_i\}_{i=1}^{M})\in \mathcal{U}^{M+1} \times \hat{\mathcal{W}}^M$ is a minimiser of the initial problem \ref{initial_problem}.
\label{thm:asymptotic_result}
\end{theorem}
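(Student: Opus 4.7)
I would follow the classical penalized-relaxation scheme, in four stages blended across three paragraphs: (i) derive a $\gamma_j$-independent upper bound on $\inf\mathcal{F}_{1,\gamma_j}$; (ii) extract subsequential limits via a priori compactness; (iii) identify the limits of the auxiliary variables using the asymptotically vanishing penalty gaps; (iv) match the energies via lower semicontinuity. For (i), plug in any admissible $(\theta_R^{\sharp},\{\theta_{T_i}^{\sharp},\varphi_i^{\sharp}\})\in\mathcal{U}^{M+1}\times\hat{\mathcal{W}}^M$ for \ref{initial_problem} together with its natural auxiliary extension $V_i^{\sharp}=\nabla\varphi_i^{\sharp}$, $W_i^{\sharp}=(\nabla\varphi_i^{\sharp})^{-1}$, $\theta_{\tilde T_i,l}^{\sharp}=\theta_{T_i,l}^{\sharp}\circ\varphi_i^{\sharp}-\theta_{R,l}^{\sharp}$: the three $\gamma$-weighted penalties vanish, so $\mathcal{F}_{1,\gamma_j}(\cdots)=\mathcal{F}_1(\theta_R^{\sharp},\{\theta_{T_i}^{\sharp},\varphi_i^{\sharp}\})$; taking the infimum over $\sharp$ yields $\inf\mathcal{F}_{1,\gamma_j}\le\inf\mathcal{F}_1$ uniformly in $j$, and in particular the minimizing sequences have $\gamma_j$-uniformly bounded energy.

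For (ii)--(iii), this uniform bound directly yields: $\theta_{T_i,k_j},\theta_{R,k_j}$ bounded in $BV(\Omega,\mathbb{R}^N)$, hence $L^1$-precompact with limits $\bar\theta_{T_i},\bar\theta_R\in\mathcal{U}$ (the simplex and $\{0,1\}$-valuedness survive $L^1$ convergence); $V_{i,k_j},W_{i,k_j}$ weakly-$*$ compact in $L^{\infty}(\Omega,M_3(\mathbb{R}))$ with limits $\bar V_i,\bar W_i$; and from $\frac{\gamma_j}{4}\|V_{i,k_j}-\nabla\varphi_{i,k_j}\|_{L^4}^4\le C$, both $\|V_{i,k_j}-\nabla\varphi_{i,k_j}\|_{L^4}\to 0$ and $\nabla\varphi_{i,k_j}$ bounded in $L^4$, so $\varphi_{i,k_j}$ is bounded in $W^{1,4}$ (via the $\mathrm{Id}+W^{1,4}_0$ structure and Poincar\'e). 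The Morrey compact embedding $W^{1,4}(\Omega)\hookrightarrow C^{0,1/4}(\bar\Omega)$ in dimension $3$ then gives uniform convergence $\varphi_{i,k_j}\to\bar\varphi_i$; combining weak $L^4$ convergence of $\nabla\varphi_{i,k_j}$ with the vanishing $L^4$ gap forces $\bar V_i=\nabla\bar\varphi_i$. Ball's weak continuity of minors, lower semicontinuity of $\delta\mapsto\delta^{-10}$, and polyconvexity of $W_{Op}$ yield $\det\nabla\bar\varphi_i>0$ a.e. with $1/\det\nabla\bar\varphi_i\in L^{10}(\Omega)$, placing $\bar\varphi_i$ in $\hat{\mathcal{W}}$. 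The penalty $\frac{\gamma_j}{2}\|W_{i,k_j}-V_{i,k_j}^{-1}\|_{L^2}^2\le C$ forces $V_{i,k_j}W_{i,k_j}\to I_3$ in $L^2$; passing to the limit via Ball's weak continuity of minors applied to $\nabla\varphi_{i,k_j}$, the formula $V^{-1}=\mathrm{Cof}(V)^T/\det V$, and the uniform $L^{\infty}$ bound on $W_{i,k_j}$ identifies $\bar W_i=(\nabla\bar\varphi_i)^{-1}$.

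Finally, the $BV$-bound on $\theta_{\tilde T_i,l,k_j}$ gives strong $L^1$ precompactness, and the vanishing penalty $\gamma_j\,\|\theta_{\tilde T_i,l,k_j}-(\theta_{T_i,l,k_j}\circ\varphi_{i,k_j}-\theta_{R,l,k_j})\|_{L^1}\le C$ identifies the limit once one has $\theta_{T_i,l,k_j}\circ\varphi_{i,k_j}\to\bar\theta_{T_i,l}\circ\bar\varphi_i$ in $L^1$. This last step, combining the uniform convergence of $\varphi_{i,k_j}$, the strong $L^1$ convergence of $\theta_{T_i,l,k_j}$, and a change of variables underpinned by the $L^{\infty}$ bound on $(\nabla\bar\varphi_i)^{-1}$ inherited in the limit, is the technically most delicate point. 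The remaining pieces are routine: weak lower semicontinuity of every $TV$ term, polyconvex weak lower semicontinuity of $\int_{\Omega} W_{Op}(V_i)\,dx$, continuity of the fidelity integrals under uniform and $L^1$ convergence, and non-negativity of the vanishing penalties together yield $\liminf_j\mathcal{F}_{1,\gamma_j}(\cdots)\ge\mathcal{F}_1(\bar\theta_R,\{\bar\theta_{T_i},\bar\varphi_i\}_{i=1}^M)\ge\inf\mathcal{F}_1$; combined with the upper bound from (i), all inequalities collapse to equalities, so $(\bar\theta_R,\{\bar\theta_{T_i},\bar\varphi_i\}_{i=1}^M)$ is a minimizer of \ref{initial_problem} and the energies match. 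The two hard points are the identification $\bar W_i=(\nabla\bar\varphi_i)^{-1}$ from weakly-only convergent sequences and the passage to the limit in the nonlinear composition, both resting on the bi-Lipschitz character enforced by the hard $L^{\infty}$ constraints on $\nabla\varphi_i$ and its inverse.
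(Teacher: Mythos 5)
Your overall architecture is sound and matches the paper's announced three-part structure (uniform bound/coercivity, extraction of converging subsequences, lower semicontinuity): the lifting of an admissible point of \ref{initial_problem} to kill the $\gamma$-weighted penalties, the resulting $j$-uniform energy bound, the $BV$-compactness of the phase variables, the $L^4$-gap argument giving $\bar V_i=\nabla\bar\varphi_i$, and the final collapse of the inequality chain are all correct and are the right steps. However, there is a genuine gap at the step you yourself flag as hard, namely the identification $\bar W_i=(\nabla\bar\varphi_i)^{-1}$ (and, with it, the fact that $\|(\nabla\bar\varphi_i)^{-1}\|_{L^\infty}\leq\beta$, without which $\bar\varphi_i\notin\hat{\mathcal{W}}$ and the chain $\mathcal{F}_1(\bar X)\geq\inf\mathcal{F}_1$ cannot be closed). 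Your sketch invokes Ball's weak continuity of minors, the formula $V^{-1}=\mathrm{Cof}(V)^T/\det V$, and the $L^\infty$ bound on $W_{i,k_j}$, but none of these resolves the actual obstruction: from $\|W_{i,k_j}-V_{i,k_j}^{-1}\|_{L^2}\to 0$ you reduce to identifying the weak limit of $V_{i,k_j}^{-1}$, and the cofactor formula merely rewrites this as a quotient (equivalently, a product such as $V_{i,k_j}W_{i,k_j}$ or $W_{i,k_j}\det V_{i,k_j}$) of two sequences each of which converges only weakly (resp.\ weakly-$*$). The weak limit of such a product is not the product of the limits, and no div--curl or compensated-compactness structure is available here since $W_{i,k_j}$ is an arbitrary $L^\infty$-bounded matrix field. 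Note also that the constraint set $\{A:\det A>0,\ \|A^{-1}\|\leq\beta\}$ is not convex, so one cannot argue that it is preserved under weak-$*$ limits.

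Closing this gap requires an additional idea. One route is to first establish $\|(\nabla\bar\varphi_i)^{-1}\|_{L^\infty}\leq\beta$ by exploiting that $\|\mathrm{adj}\,A\|-\beta\det A$ is a \emph{convex} function of the pair $(\mathrm{adj}\,A,\det A)$, to which Ball's weak continuity of minors applies (pass the asymptotic inequality $\|\mathrm{adj}\,\nabla\varphi_{i,k_j}\|\leq(\beta+o(1))\det\nabla\varphi_{i,k_j}+o_{L^1}(1)$, inherited from $\|V_{i,k_j}^{-1}\|\leq\beta+\|V_{i,k_j}^{-1}-W_{i,k_j}\|$, to the limit by weak lower semicontinuity); this places $\bar\varphi_i$ in $\hat{\mathcal{W}}$, lets the energy chain collapse, and then the convergence $\int_\Omega\|V_{i,k_j}\|^4\,dx\to\int_\Omega\|\nabla\bar\varphi_i\|^4\,dx$ together with uniform convexity of $L^4$ upgrades $V_{i,k_j}\rightharpoonup\nabla\bar\varphi_i$ to strong (hence a.e.) convergence, from which $V_{i,k_j}^{-1}\to(\nabla\bar\varphi_i)^{-1}$ a.e.\ and $\bar W_i=(\nabla\bar\varphi_i)^{-1}$ follow. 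Without some such mechanism producing strong or almost-everywhere convergence of $V_{i,k_j}$, the step as you describe it would fail. A similar (though more benign) caveat applies to the composition limit $\theta_{T_i,l,k_j}\circ\varphi_{i,k_j}\to\bar\theta_{T_i,l}\circ\bar\varphi_i$: the change of variables you invoke needs a uniform lower bound on $\det\nabla\varphi_{i,k_j}$ \emph{along the sequence}, which again has to be extracted from the $W_i$-constraint via the vanishing penalty rather than assumed.
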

\begin{proof}
This proof is divided into three parts. The first one consists of deriving a coercivity inequality. The second one shows the convergence of a minimising sequence and the last one is dedicated to the lower semi-continuity of the functional. See Section 2 of the supplementary material for a detailed proof.
\end{proof}
Equipped with this material and argument, we propose the following discretised numerical scheme.
\subsection{Numerical Scheme}\label{sub-sec:numerical_scheme}
In this subsection, we restrict ourselves to the two-dimensional case and make some minor changes to the model for the purpose of simplicity but the extension to the three-dimensional case shouldn't {\textcolor{black}{induce additional challenges}}.\newline

We now consider the following discrete two-dimensional decoupled problem {\textcolor{black}{\textemdash note that from now on, $\theta_{T_i}$ denotes the partition of $T_i$ into piecewise constant regions, i.e. $\theta_{T_i}=\displaystyle{\sum_{l=1}^{N}}c_{T_i,l}\,\theta_{T_i,l}$, the number of \textcolor{black}{shapes}, $N$ being an unknown (see Remark \ref{remark_discrete_Potts}) similarly for $\theta_R $\textemdash}}:
\begin{align}
  \nonumber \inf \Bigg\{ &\mathcal{F}_{2,\gamma}(\{\varphi_i,\theta_{T_i},V_i,\theta_{\tilde T_i},W_i\}_{i=1}^{M},\theta_R) = \frac{1}{M}\underset{i=1}{\overset{M}{\sum}}\textcolor{black}{\gamma_T}\|\nabla \theta_{T_i}\|_{L^0(\Omega)} \\
  \nonumber &+ \textcolor{black}{\lambda_{T}}\|\theta_{T_i}-T_i\|_{L^2(\Omega)}^2+ \textcolor{black}{\gamma_R}\|\nabla \theta_{R}\|_{L^0(\Omega)} + \textcolor{black}{\lambda_R} \| \theta_R - T_i\circ \varphi_i\|_{L^2(\Omega)}^2 \\
  \nonumber &+ \textcolor{black}{\gamma_{\tilde T}}\|\nabla \theta_{\tilde T_i}\|_{L^0(\Omega)} +\frac{\gamma_1}{2}\|\theta_{\tilde T_i} - (\theta_{T_i}\circ \varphi_i - \theta_{R})\|_{L^2(\Omega)}^2+ \int_\Omega W_{Op}'(V_i, \mathrm{det} V_i)\,dx\\
 \nonumber & + \frac{\gamma_2}{2}\|V_i-\nabla \varphi_i\|_{L^2(\Omega,M_2(\mathbb{R}))}^2+\mathds{1}_{\{\|.\|_{L^\infty(\Omega,M_3(\mathbb{R}))}\leq \alpha\}}(V_i)+\mathds{1}_{\{\|.\|_{L^\infty(\Omega,M_2(\mathbb{R}))}\leq \beta\}}(W_i)\\
  &+\frac{\gamma_3}{2}\|W_i-V_i^{-1}\|_{L^2(\Omega,M_2(\mathbb{R}))}^2\Bigg\}, \tag{DPb} \label{decoupled_problem_bidimensional}
 \end{align}
with $W_{Op}'(\psi,\delta) = \left\{ \begin{array}{ll} a_1\|\psi\|^4+a_2(\delta-1)^2+\frac{a_3}{\delta^{\textcolor{black}{{10}}}}-2a_1-a_3&\text{ if } \delta >0 \\ +\infty &\text{ otherwise }\end{array}\right.$. 
\begin{remark}In the two-dimensional case, the cofactor matrix vanishes and we only {\textcolor{black}{need}} an $L^2$-penalisation to get the asymptotic result as in \cite{debroux-bib:debroux_le_guyader_SIIMS}. Also $BV(\Omega) \hookrightarrow L^2(\Omega) $ in 2 dimensions, so we can replace the $L^1$-penalisation for the auxiliary variable $\theta_{\tilde T_i}$ by an $L^2$-penalisation term.
\end{remark}
\begin{remark}\label{remark_discrete_Potts}
We have also opted for the discrete Potts model for the segmentation as in \cite{debroux-bib:storath} since it does not require any prior knowledge on the number of \textcolor{black}{shapes} in the image. If the number of \textcolor{black}{shapes} is known a priori, another approach based on convexification as in \cite{debroux-bib:pock} can be applied.
\end{remark}
We address this optimisation problem by an alternating scheme in which we fix all the variables except one and solve the subproblem related to the remaining unknown iteratively.
\begin{itemize}
    \item {\bf{Sub-problem 1. Optimisation over $\theta_{T_i}$}}. For each $i=1,\cdots,M$, the problem in $\theta_{T_i}$ amounts to solve
    \begin{align*}
        &\underset{\theta_{T_i}}{\inf}\,\textcolor{black}{\gamma_T}\|\nabla \theta_{T_i}\|_{L^0(\Omega)}+\textcolor{black}{\lambda_T}\|\theta_{T_i}-T_i\|_{L^2(\Omega)}^2+\frac{\gamma_1}{2}\|\theta_{\tilde T_i}-\theta_{T_i}\circ \varphi_i+\theta_{R}\|_{L^2(\Omega)}^2,\\
        \Leftrightarrow & \underset{\theta_{T_i}}{\inf}\,\textcolor{black}{\gamma_T}\|\nabla \theta_{T_i}\|_{L^0(\Omega)}+\textcolor{black}{\lambda_T}\|\theta_{T_i}-T_i\|_{L^2(\Omega)}^2\\
        &+\frac{\gamma_1}{2}\|(\theta_{\tilde T_i}\circ \varphi_i^{-1}-\theta_{T_i}+\theta_{R}\circ \varphi_i^{-1}){\textcolor{black}{\left(\mathrm{det}\nabla \varphi_i\right)^{-\frac{1}{2}}}}\|_{L^2(\Omega)}^2,\\
        \Leftrightarrow & \underset{\theta_{T_i}}{\inf}\,\textcolor{black}{\gamma_T}\|\nabla \theta_{T_i}\|_{L^0(\Omega)}\\
        &+\|{\textcolor{black}{\sqrt{\textcolor{black}{\lambda_T}+(\mathrm{det}\nabla \varphi_i)^{-1}\frac{\gamma_1}{2}}}}\,\theta_{T_i}-\frac{\textcolor{black}{\lambda_T} T_i + {\textcolor{black}{\frac{\gamma_1}{2}\,(\mathrm{det}\nabla \varphi_i)^{-1}}}(\theta_{\tilde T_i}\circ \varphi_i^{-1}+\theta_R\circ\varphi_i^{-1})}{{\textcolor{black}{\sqrt{\textcolor{black}{\lambda_T}+(\mathrm{det}\nabla \varphi_i)^{-1}\frac{\gamma_1}{2}}}}}\|_{L^2(\Omega)}^2.
    \end{align*}
    This amounts to solve the Potts model with nonnegative weights and we use the algorithm in \cite{debroux-bib:storath} based on the Alternating Direction Method of Multipliers (ADMM) and linear programming.
    \item {\bf{Sub-problem 2. Optimisation over $\theta_{\tilde T_i}$}}. For each $i=1,\cdots,M$, the sub-problem in $\theta_{\tilde T_i}$ is the following {\textcolor{black}{one}}:
    \begin{align*}
        &\underset{\theta_{\tilde T_i}}{\inf}\, \textcolor{black}{\gamma_{\tilde T}} \|\nabla \theta_{\tilde T_i}\|_{L^0(\Omega)}+\frac{\gamma_1}{2}\|\theta_{\tilde T_i}-(\theta_{T_i}\circ \varphi_i-\theta_R)\|_{L^2(\Omega)}^2.
    \end{align*}
    This is again the Potts model and we use the same algorithm \cite{debroux-bib:storath} to solve it in practice.
    \item {\bf{Sub-problem 3. Optimisation over $\theta_R$}}. By fixing all the other variables, the optimisation problem with respect to $\theta_R$ becomes
    \begin{align*}
        &\underset{\theta_R}{\inf}\, \textcolor{black}{\gamma_R}\|\nabla \theta_R\|_{L^0(\Omega)}+\frac{1}{M}\underset{i=1}{\overset{M}{\sum}}\textcolor{black}{\lambda_R}\|\theta_R-T_i\circ \varphi_i\|_{L^2(\Omega)}+\frac{\gamma_1}{2}\|\theta_R-(\theta_{T_i}\circ \varphi_i-\theta_{\tilde T_i})\|_{L^2(\Omega)}^2,\\
        \Leftrightarrow & \underset{\theta_R}{\inf}\, \textcolor{black}{\gamma_R}\|\nabla \theta_R\|_{L^0(\Omega)}+\|(\textcolor{black}{\lambda_R}+\frac{\gamma_1}{2})(\theta_R-\frac{(\frac{1}{M}\underset{i=1}{\overset{M}{\sum}}\textcolor{black}{\lambda_R} T_i\circ \varphi_i+\frac{\gamma_1}{2}(\theta_{T_i}\circ \varphi_i)-\theta_{\tilde T_i})}{\textcolor{black}{\lambda_R}+\frac{\gamma_1}{2}})\|_{L^2(\Omega)}^2.
    \end{align*}
    This is again a Potts model that we solve with the Algorithm \cite{debroux-bib:storath}.
    \item {\bf{Sub-problem 4. Optimisation over $V_i$.}} For each $i=1,\cdots,M$, the sub-problem in $V_i$ reads
    \begin{align*}
        &\underset{V_i}{\inf} F(V_i)+{\textcolor{black}{{Reg}}}(V_i)=\int_\Omega a_1\|V_i\|^4+a_2(\mathrm{det}V_i-1)^2+\frac{a_3}{(\mathrm{det}V_i)^{10}}\,dx\\
        &+\frac{\gamma_2}{2}\|V_i-\nabla \varphi_i\|_{L^2(\Omega,M_2(\mathbb{R}))}^2+\frac{\gamma_3}{2}\|W_i-V_i^{-1}\|_{L^2(\Omega)}^2+\mathds{1}_{\{\|.\|_{L^\infty(\Omega,M_2(\mathbb{R}))}\leq \alpha\}}(V_i).
    \end{align*}
    This can be cast as a structured convex non-smooth optimisation problem of the sum of a proper closed convex function ${\textcolor{black}{{Reg}}}(.)= \mathds{1}_{\{\|.\|_{L^\infty(\Omega,M_2(\mathbb{R}))}\leq \alpha\}}(.)$ and a smooth function $F$ corresponding to the remaining of the functional. This is a classical optimisation problem and several schemes have been developed to solve it. In practice, we use the simple iterative forward-backward splitting algorithm \cite{debroux-bib:lions}:
    \begin{align*}
        V_i^{k+1}=\mathrm{prox}_{\gamma {\textcolor{black}{{Reg}}}}(V_i^k-\gamma \nabla F(V_i^k)),
    \end{align*}
    with $\mathrm{prox}_{\gamma {\textcolor{black}{{Reg}}}}(y) = \underset{x}{\min}\frac{1}{2}\|x-y\|_2^2+\gamma {\textcolor{black}{{Reg}}}({\textcolor{black}{y}}) = \underset{x}{\min}\frac{1}{2}\|x-y\|_2^2+\gamma \mathds{1}_{\{\|.\|_{L^\infty(\Omega,M_2(\mathbb{R}))}\leq \alpha\}}({\textcolor{black}{y}})=P_{\{\|.\|_{L^\infty(\Omega,M_2(\mathbb{R}))}\leq \alpha\}}(y)$, $P_C$ being the projection operator onto the convex set $C$. This could be improved in future work by using for instance the algorithm proposed in \cite{debroux-bib:liang}.
    \item {\bf{Sub-problem 5. Optimisation over $W_i$.}} For each $i=1,\cdots,M$, we solve the following minimisation problem
    \begin{align*}
  &\underset{W_i}{\inf}\, \frac{\gamma_3}{2}\|W_i-V_i^{-1}\|_{L^2(\Omega)}^2 + \mathds{1}_{\{\|.\|_{L^\infty(\Omega,M_2(\mathbb{R}))}\leq \beta\}}(W_i) =P_{\{\|.\|_{L^\infty(\Omega,M_2(\mathbb{R}))}\leq \alpha\}}(V_i^{-1}).
    \end{align*}
    \item {\bf{Sub-problem 6. Optimisation over $\varphi_i$.}} For each $i=1,\cdots,M$, the sub-problem in $\varphi_i$ reads
    \begin{align*}
   \underset{\varphi_i}{\inf}\,     \gamma_R\|\theta_R-{\textcolor{black}{T_i}}\circ \varphi_i\|_{L^2(\Omega)}^2+\frac{\gamma_1}{2}\|\theta_{\tilde T_i}-\theta_{T_i}\circ \varphi_i+\theta_R\|_{L^2(\Omega)}^2+\frac{\gamma_2}{2}\|V_i-\nabla \varphi_i\|_{L^2(\Omega)}^2.
    \end{align*}
    We propose to solve the {\textcolor{black}{associated}} Euler-Lagrange equation using an $L^2$-gradient flow scheme with an implicit Euler time stepping.
\end{itemize}
The overall algorithm is summarised in Algorithm \ref{alg:numerical_resolution}.
\begin{algorithm}[htbp!]
\caption{Alternating scheme of resolution.}
\label{alg:numerical_resolution}
\begin{algorithmic}
\STATE{1. }{Define $k:=1$, $T_i:=$ $i$-th template image,  $\theta_{T_i}:={\textcolor{black}{\mathrm{Potts\,\, segmentation\,\, of\,\, }}} T_i$,\newline$\theta_R:={\textcolor{black}{\mathrm{Potts\,\, segmentation\,\, of\,\, }\frac{1}{M}\,{\displaystyle{\sum_{i=1}^{M}}}\,T_i}}$,\, $\theta_{\tilde T_i}:=\theta_{T_i}-\theta_{R}$,  $V_i=\begin{pmatrix}V_{i,11}&V_{i,12}\\V_{i,21}&V_{i,22}\end{pmatrix}:={\textcolor{black}{I}}$,  $W_i:=\begin{pmatrix}W_{i,11} & W_{i,12} \\ W_{i,21} & W_{i,22} \end{pmatrix}:=I$, $a_1$, $a_2$, $a_3$, \textcolor{black}{$\lambda_T$, $\lambda_R$,} $\gamma_T$, $\gamma_R$, $\gamma_1$, $\gamma_2$, $\gamma_3$, $nbIter$, $\alpha$, $\beta$, $U_i=(U_{i,1},U_{i,2}):=0$, displacements associated to the deformation $\varphi_i$, for $i=1,\cdots,M$.}
\WHILE{$k < nbIter$}
\IF{k\%10==0}
\STATE{2.1. }{Compute for $i=1,\cdots,M$ the inverse deformation $\varphi_i^{-1}$ using a Delaunay triangulation and linear interpolation.}
\STATE{2.2. }{For $i=1,\cdots,M$, update $\theta_{T_i}$ by solving the Potts model with algorithm in \cite{debroux-bib:storath}: $
\underset{u}{\inf}\,\textcolor{black}{\gamma_T}\|\nabla u\|_{L^0(\Omega)}+\|{\textcolor{black}{\sqrt{\textcolor{black}{\lambda_T}+(\mathrm{det}\nabla \varphi_i)^{-1}\frac{\gamma_1}{2}}}}\,u
 -\frac{\textcolor{black}{\lambda_T} T_i + {\textcolor{black}{\frac{\gamma_1}{2}\,(\mathrm{det}\nabla \varphi_i)^{-1}}}(\theta_{\tilde T_i}\circ \varphi_i^{-1}+\theta_R\circ\varphi_i^{-1})}{{\textcolor{black}{\sqrt{\textcolor{black}{\lambda_T}+(\mathrm{det}\nabla \varphi_i)^{-1}\frac{\gamma_1}{2}}}}}\|_{L^2(\Omega)}^2
$ end for 2.2.}
\STATE{2.3. }{For $i=1,\cdots,M$, update $\theta_{\tilde T_i}$ by solving the Potts model with algorithm in \cite{debroux-bib:storath}: $\underset{u}{\inf}\textcolor{black}{\gamma_{\tilde T}}\|\nabla u\|_{L^0(\Omega)} +\frac{\gamma_1}{2}\| (\theta_{T_i}\circ \varphi_i-\theta_{R})-u\|_{L^2(\Omega)}^2$ end for 2.3.}
\STATE{2.4. }{Update $\theta_{R}$ by solving the Potts model with algorithm in \cite{debroux-bib:storath}: $\underset{u}{\inf}\textcolor{black}{\gamma_R}\|\nabla u\|_{L^0(\Omega)} +{\textcolor{black}{(\textcolor{black}{\lambda_R}+\dfrac{\gamma_1}{2})\,\|u-\dfrac{\frac{\textcolor{black}{\lambda_R}}{M}\,{\sum_{i=1}^{M}}\,T_i\circ \varphi_i+\frac{\gamma_1}{2M}\,{\sum_{i=1}^{M}}\,\left(\theta_{T_i}\circ \varphi-\theta_{\tilde{T_i}}\right)}{\textcolor{black}{\lambda_R}+\dfrac{\gamma_1}{2}}\|^2_{L^2(\Omega)}}}$}.
\ENDIF
\STATE{2.5. }{For each $i=1,\cdots,M$, for each pixel $(l,j)$, {\textcolor{black}{\textemdash $c$ playing a role similar to the one of a step size in a gradient method\textemdash}} update $V_i$ using the following equations: $\left\{\begin{array}{lll}
temp_1(l,j)&=&V_{i,11}(l,j)+c\bigg(\frac{10a_3}{(\mathrm{det}V_i(l,j))^{11}}\,V_{i,22}(l,j)-4a_1\,V_{i,11}(l,j)\\
&&\|V_i(l,j)\|^2-2a_2(\mathrm{det}V_i(l,j)-1)\,V_{i,22}(l,j)+\gamma_2(\frac{\partial \varphi_{i,1}}{\partial x}(l,j)\\
&&-V_{i,11}(l,j))-\gamma_3(W_{i,11}(l,j)-\frac{V_{i,22}(l,j)}{\mathrm{det}V_i(l,j)})(\frac{V_{i,22}(l,j)^2}{(\mathrm{det}V_i(l,j))^2})\\
&&+\gamma_3(W_{i,12}(l,j)+\frac{V_{i,12}(l,j)}{\mathrm{det}V_i(l,j)})(\frac{V_{i,22}(l,j)V_{i,12}(l,j)}{(\mathrm{det}V_i(l,j))^2})\\
&&+\gamma_3(W_{i,21}(l,j){\textcolor{black}{+}}\frac{V_{i,21}(l,j)}{\mathrm{det}V_i(l,j)})(\frac{V_{i,22}(l,j)V_{i,21}(l,j)}{(\mathrm{det}V_i(l,j))^2})\\
&&-\gamma_3(W_{i,22}(l,j)-\frac{V_{i,11}(l,j)}{\mathrm{det}V_i(l,j)})(-\frac{1}{\mathrm{det}V_i(l,j)}+\frac{V_{i,11}(l,j)V_{i,22}(l,j)}{(\mathrm{det}V_i(l,j))^2})\bigg),\\
V_{i,11}(l,j)&=&\left\{ \begin{array}{ll} -\alpha &\text{ if } temp_1(l,j)< -\alpha,\\temp_1(l,j)&\text{ if } |temp_1(l,j) |\leq \alpha,\\ \alpha &\text{ if }temp_1(l,j)>\alpha \end{array}\right.,\\
temp_2(l,j)&=&V_{i,12}(l,j)+c\bigg(\frac{10a_3}{(\mathrm{det}V_i(l,j))^{11}}(-V_{i,21}(l,j))-4a_1(V_{i,12}(l,j))\\
&&\|V_i(l,j)\|^2+2a_2(\mathrm{det}V_i(l,j)-1)(V_{i,21}(l,j))+\gamma_2(\frac{\partial \varphi_{i,1}}{\partial y}(l,j)\\
&&-V_{i,12}(l,j))+\gamma_3(W_{i,11}(l,j)-\frac{V_{i,22}(l,j)}{\mathrm{det}V_i(l,j)})(\frac{V_{i,22}(l,j)V_{i,21}(l,j)}{(\mathrm{det}V_i(l,j))^2})\\
&&-\gamma_3(W_{i,12}(l,j)+\frac{V_{i,12}(l,j)}{\mathrm{det}V_i(l,j)})(\frac{1}{\mathrm{det}V_i(l,j)}+\frac{V_{i,21}(l,j)V_{i,12}(l,j)}{(\mathrm{det}V_i(l,j))^2})\\
&&-\gamma_3(W_{i,21}(l,j){\textcolor{black}{+}}\frac{V_{i,21}(l,j)}{\mathrm{det}V_i(l,j)})(\frac{V_{i,21}(l,j)^2}{(\mathrm{det}V_i(l,j))^2})\\
&&+\gamma_3(W_{i,22}(l,j)-\frac{V_{i,11}(l,j)}{\mathrm{det}V_i(l,j)})(\frac{V_{i,11}(l,j)V_{i,21}(l,j)}{(\mathrm{det}V_i(l,j))^2})\bigg),\\
V_{i,12}(l,j)&=&\left\{ \begin{array}{ll} -\alpha &\text{ if } temp_2(l,j)< -\alpha,\\temp_2(l,j)&\text{ if } |temp_2(l,j) |\leq \alpha,\\ \alpha &\text{ if }temp_2(l,j)>\alpha \end{array}\right.,
\end{array}\right.$}
\algstore{myalg1}
\end{algorithmic}
\end{algorithm}
\begin{algorithm}[htp!]
\begin{algorithmic}
\algrestore{myalg1}
\STATE{2.5. }{$\left\{ \begin{array}{lll} 
temp_3(l,j)&=&V_{i,21}(l,j)+c\bigg(\frac{10a_3}{(\mathrm{det}V_i(l,j))^{11}}(-V_{i,12}(l,j))-4a_1(V_{i,21}(l,j))\\
&&\|V_i(l,j)\|^2-2a_2(\mathrm{det}V_i(l,j)-1)(-V_{i,12}(l,j))+\gamma_2(\frac{\partial \varphi_{i,2}}{\partial x}(l,j)\\
&&-V_{i,21}(l,j)){\textcolor{black}{+}}\gamma_3(W_{i,11}(l,j){\textcolor{black}{-}}\frac{V_{i,22}(l,j)}{\mathrm{det}V_i(l,j)})\\
&&(\frac{V_{i,22}(l,j)V_{i,12}(l,j)}{(\mathrm{det}V_i(l,j))^2})-\gamma_3(W_{i,12}(l,j)+\frac{V_{i,12}(l,j)}{\mathrm{det}V_i(l,j)})(\frac{V_{i,12}(l,j)^2}{(\mathrm{det}V_i(l,j))^2})\\
&&-\gamma_3(W_{i,21}(l,j){\textcolor{black}{+}}\frac{V_{i,21}(l,j)}{\mathrm{det}V_i(k,j)})({\textcolor{black}{\frac{1}{\mathrm{det}V_i(l,j)}}}+\frac{V_{i,12}(l,j)V_{i,21}(l,j)}{(\mathrm{det}V_i(l,j))^2})\\
&&+\gamma_3(W_{i,22}(l,j)-\frac{V_{i,11}(l,j)}{\mathrm{det}V_i(l,j)})(\frac{V_{i,11}(l,j)(V_{i,12}(l,j))}{(\mathrm{det}V_i(l,j))^2})\bigg),\\
V_{i,21}(l,j)&=&\left\{ \begin{array}{ll} -\alpha &\text{ if } temp_3(l,j)< -\alpha,\\temp_3(l,j)&\text{ if } |temp_3(l,j) |\leq \alpha,\\ \alpha &\text{ if }temp_3(l,j)>\alpha \end{array}\right.\\
temp_4(l,j)&=&V_{i,22}(l,j)+c\bigg(\frac{10a_3}{(\mathrm{det}V_i(l,j))^{11}}V_{i,11}(l,j)-4a_1(V_{i,22}(l,j))\\
&&\|V_i(l,j)\|^2-2a_2(\mathrm{det}V_i(l,j)-1)V_{i,11}(l,j)\\
&&+\gamma_2(\frac{\partial \varphi_{i,2}}{\partial y}(l,j)-V_{i,22}(l,j))-\gamma_3(W_{i,11}(l,j)-\frac{V_{i,22}(l,j)}{\mathrm{det}V_i(l,j)})\\
&&(-\frac{1}{\mathrm{det}V_i(l,j)}+\frac{V_{i,22}(l,j)V_{i,11}(l,j)}{(\mathrm{det}V_i(l,j))^2})+\gamma_3(W_{i,12}(l,j)\\
&&+\frac{V_{i,12}(l,j)}{\mathrm{det}V_i(l,j)})(\frac{V_{i,11}(l,j)V_{i,12}(l,j)}{(\mathrm{det}V_i(l,j))^2})+\gamma_3(W_{i,21}(l,j){\textcolor{black}{+}}\frac{V_{i,21}(l,j)}{\mathrm{det}V_i(l,j)})\\
&&(\frac{V_{i,11}(l,j))V_{i,21}(l,j)}{(\mathrm{det}V_i(l,j))^2})-\gamma_3(W_{i,22}(l,j)-\frac{V_{i,11}(l,j)}{\mathrm{det}V_i(l,j)})\\
&&(\frac{{\textcolor{black}{(V_{i,11}(l,j))^2}}}{(\mathrm{det}V_i(l,j))^2})\bigg),\\
V_{i,22}(l,j)&=&\left\{ \begin{array}{ll} -\alpha &\text{ if } temp_4(l,j)< -\alpha,\\ temp_4(l,j) &\text{ if } |temp_4(l,j)|\leq \alpha,\\ \alpha &\text{ if }temp_4(l,j)>\alpha 
\end{array}\right. .
\end{array} \right.$ end for 2.5.}
\STATE{2.6. }{For each $i=1,\cdots,M$, for each pixel $(l,j)$, update $W_i$ with this closed form: $\left\{ 
\begin{array}{lll}
W_{i,11}(l,j)&=&\left\{\begin{array}{ll} -\beta &\text{ if } \frac{V_{i,22}(l,j)}{\mathrm{det}V_i(l,j)}< -\beta\\ 
\frac{V_{i,22}(l,j)}{\mathrm{det}V_i(l,j)} & \text{ if } |\frac{V_{i,22}(l,j)}{\mathrm{det}V_i(l,j)}|\leq \beta \\ \beta &\text{ if } \frac{V_{i,22}(l,j)}{\mathrm{det}V_i(l,j)}> \beta \end{array}\right.\\
W_{i,12}(l,j)&=&\left\{\begin{array}{ll} -\beta &\text{ if } \frac{-V_{i,12}(l,j)}{\mathrm{det}V_i(l,j)}< -\beta\\ \frac{-V_{i,12}(l,j)}{\mathrm{det}V_i(l,j)} & \text{ if } |\frac{-V_{i,12}(l,j)}{\mathrm{det}V_i(l,j)}|\leq \beta \\ \beta &\text{ if } \frac{-V_{i,12}(l,j)}{\mathrm{det}V_i(l,j)}> \beta \end{array}\right.\\
W_{i,21}(l,j)&=&\left\{\begin{array}{ll} -\beta &\text{ if } \frac{-V_{i,21}(l,j)}{\mathrm{det}V_i(l,j)}<-\beta\\ 
\frac{-V_{i,21}(l,j)}{\mathrm{det}V_i(l,j)} & \text{ if } |\frac{-V_{i,21}(l,j)}{\mathrm{det}V_i(l,j)}|\leq \beta \\ 
\beta &\text{ if } \frac{-V_{i,21}(l,j)}{\mathrm{det}V_i(l,j)}> \beta \end{array}\right.\\
W_{i,22}(l,j)&=&\left\{\begin{array}{ll} -\beta &\text{ if } \frac{{\textcolor{black}{V_{i,11}(l,j)}}}{\mathrm{det}V_i(l,j)}< -\beta\\ \frac{{\textcolor{black}{V_{i,11}(l,j)}}}{\mathrm{det}V_i(l,j)} & \text{ if } |\frac{{\textcolor{black}{V_{i,11}(l,j)}}}{\mathrm{det}V_i(l,j)}|\leq \beta \\ \beta &\text{ if } \frac{{\textcolor{black}{V_{i,11}(l,j)}}}{\mathrm{det}V_i(l,j)}>\beta \end{array}\right.
\end{array}
\right.
$ end for 2.6.}
\STATE{2.7. }{Solve for all $i=1,\cdots,M$, the Euler-Lagrange equation in $U_i$ using an implicit finite difference scheme: $0= \gamma_1 \nabla \theta_{T_i}\circ\varphi_i(\theta_{T_i}\circ \varphi_i-\theta_R-\theta_{\tilde T_i})+\textcolor{black}{\lambda_R}\,(T_i\circ \varphi_i-\theta_R)\nabla T_i\circ \varphi_i+ \gamma_2 \begin{pmatrix}
      \mathrm{div}V_{i,1}\\                                                                                                                                                                                                                                      
          \mathrm{div}V_{i,2}                                                                                                                                                                                                                                 \end{pmatrix}
 $, where $V_{i,j}$ stands for the $j$\textsuperscript{th} row of $V_i$, and $\varphi_i=Id+U_i$. end for 2.7.}
\STATE{2.8. }{$k:=k+1$.}
\ENDWHILE
\label{myalg}
\end{algorithmic}
\end{algorithm}
{\textcolor{black}{
\begin{remark} Under mild assumptions \textemdash by replacing the $L^0$-penalization by an $L^1$ one \textemdash, we can prove the convergence of the algorithm as well as a $\Gamma$-convergence result. 
\end{remark}}
We now turn to the geometry-driven statistical analysis.
\section{Representation of the Deformations in a Linear Space and Geometry-driven PCA}\label{sec:representation_deformation_linear_space_PCA}
In this section, we focus on the performance of a statistical analysis on the obtained deformations in order to retrieve the main modes of variations in terms of geometric distortions in the initial set of images. The main {\textcolor{black}{hindrance}} is that our deformation maps live in a nonlinear space whereas classical statistical tools require the objects to be in a linear space. Therefore, we first need to find a good representation of our deformations in a linear space equipped with a scalar product ({\textcolor{black}{in order to compute the covariance operator}}), {\textcolor{black}{enabling us to perform a Principal Component Analysis (PCA) on these representatives afterwards}}. The fundamental axiom of elasticity stating that the energy required to deform an object from a state of reference to another equilibrium state is the same  {\textcolor{black}{whatever the chosen path is}}, prevents a straightforward definition of geodesics. Therefore, the use of Riemannian geometry principles as in \cite{debroux-bib:srivastava3} {\textcolor{black}{cannot be envisioned}}. 

In the following, we propose, study and compare three different strategies to get a relevant depiction of our deformations in a linear space. {\textcolor{black}{The first two ones are physically/mechanically-oriented and can be viewed as non-straightforward adaptations of \cite{debroux-bib:Rumpf2011}. While in \cite{debroux-bib:Rumpf2011} the shapes are modelled through their boundaries and subject to boundary stresses, our framework involves the whole image as the object to be deformed\textcolor{black}{. N}ote that with the prescribed boundary conditions $\forall i \in \left\{1,\cdots,M\right\}$, $\varphi_i=Id$ on $\partial \Omega$, no boundary stress is applied\textcolor{black}{,} and subsequently, inner volumetric stresses  \textcolor{black}{are considered}. This constitutes a major difference with the work of \cite{debroux-bib:Rumpf2011} and as demonstrated in Section 3 of the supplementary material, it entails substantial adaptations in the mathematical developments.}}
The first two methods rely on fundamental notions of elastic behaviour and the following observation made in \cite{debroux-bib:Rumpf2011}~:
{\textit{"the classical covariance tensor can be identified with the covariance tensor of the displacements obtained by adding a small fraction of the $i$-th spring force under the Hooke's law"}}. {\textcolor{black}{Whilst the first method is based on the linearisation of the stored energy function around the identity,  which might result in the loss of the initial nonlinear nature of the deformations but has the advantage of being fast, the second approach is more intricate. It retrieves the whole nature of the deformations by performing the PCA on the Cauchy stress tensors, relying on the locally underlying one-to-one relation between this tensor and the deformation, but requires the resolution of a highly nonlinear and non convex problem similar to the one studied previously to come back to the deformation space. \newline 
Our goal was to design an alternative method that would be a good compromise between rendering the nonlinear nature of the deformation and in terms of numerical complexity. The first objective is achieved by handling both the deformation field and the deformation tensors that encode the local deformation state resulting from stresses. This constitutes another novelty of the proposed work. We have moved toward a completely different point of view since the problem is no longer \textcolor{black}{explored} as a physical one but is now identified as an approximation one in the $D^m$-spline setting (\cite{debroux-bib:arcangeli}). 
The first two methods thus serve as benchmark to assess the interest of this new vision.\newline
Due to page number limitation and as the third method proves to be a proper trade-off between ability to reproduce the nonlinear nature of the deformations and intermediate computation time, the mathematical details of the first two methods are postponed in the document supplementary material Section 3 and we only focus on the third one.}}
\subsection{Third Approach: Approximation Modelling}
\label{sub-sec:third_approach_approximation}
This section is devoted to the analysis of a novel method in which the linear representation problem is seen as an approximation one in the $D^m$-spline setting. Since the deformation tensor suitably characterises the local deformation (amplitude, direction, etc.), we aim at finding \textcolor{black}{an appropriate} approximation of our deformations in a linear space $H^3(\Omega,\mathbb{R}^2)$ that also approximates well the deformation tensors. {\textcolor{black}{For the sake of clarity, we omit the indices $i$ in the following. Henceforth, $u$ denotes the displacement field related to $\varphi\textcolor{black}{^{-1}}$, \textcolor{black}{inverse }deformation field obtained at the outcome of the first algorithmic stage.}} We consider the following problem in the two-dimensional case {\textcolor{black}{\textemdash case of interest in the numerical part, but straightforwardly extendable to 3D \textemdash}}:
{\textcolor{black}{\begin{align}
    \nonumber \underset{v\in H^3(\Omega,\mathbb{R}^2)}{\min}& \epsilon |v|_{3,\Omega,\mathbb{R}^2}^2 + \frac{\gamma}{2}\langle \rho(\nabla v+\nabla v^T - \nabla u^T - \nabla u^T - \nabla u^T\nabla u)\rangle_{M_2(\mathbb{R}),N}^2\\
    &+\langle \zeta(v-u)\rangle_{\mathbb{R}^2,N}^2,\label{problem_approximation_pratique}
\end{align}
where $|.|_{3,\Omega,\mathbb{R}^2}$ is the semi-norm on $H^3(\Omega,\mathbb{R}^2)$,
{\small{$\zeta : \left| \begin{array}{ccc} H^3(\Omega,\mathbb{R}^2) &\rightarrow & \left(\mathbb{R}^2\right)^N \\
v &\mapsto &\zeta(v)=\left(v(a_1),\cdots,v(a_N)\right)^T
\end{array}\right.$}}, {\small{$\rho : \left| \begin{array}{ccc} H^2(\Omega,M_2(\mathbb{R})) &\rightarrow & \left(M_2(\mathbb{R})\right)^N \\
v &\mapsto &\rho(v)=\left(v(a_1),\cdots,v(a_N)\right)^T
\end{array}\right.$}}. Also, $a_1,\cdots,a_N$ denote the image pixel coordinates with $N$ the total number of pixels, and $\forall \xi \in (M_2(\mathbb{R}))^N$, $\forall \eta \in  (M_2(\mathbb{R}))^N$, $\langle \xi,\eta\rangle_{M_2(\mathbb{R}),N}=\displaystyle{\sum_{i=1}^{N}}\,\xi_i\,:\,\eta_i$, while $\forall \xi \in (\mathbb{R}^2)^N$, $\forall \eta \in  (\mathbb{R}^2)^N$, $\langle \xi,\eta\rangle_{\mathbb{R}^2,N}=\displaystyle{\sum_{i=1}^{N}}\,\xi_i^T \,\eta_i$.
}}
However, {\textcolor{black}{stated as \textcolor{black}{it} is, the problem}} is not well defined since $u\in W^{1,\infty}_0(\Omega,\mathbb{R}^2)$ and does not belong to $\mathcal{C}^1(\Omega, \mathbb{R}^2)$ preventing us from extracting isolated values of $\nabla u$. Therefore, for the theoretical analysis of the model, we introduce $(f_k)\in C^\infty_0(\Omega,\mathbb{R}^2)\cap W^{1,\infty}(\Omega,\mathbb{R}^2)$, the sequence from the density result such that, 
$$ f_k \underset{k\rightarrow +\infty}{\longrightarrow} u\,\,\,\,{\mbox{{\textcolor{black}{in $W^{1,\infty}\textcolor{black}{(\Omega,\mathbb{R}^2)}$}}}}.$$
{\textcolor{black}{In practice however, we solve problem \ref{problem_approximation_pratique} and we  give details on the implementation in Sub-section \ref{sub-sec:numerical_resolution}.}
\begin{remark} {\textcolor{black}{An alternative approach would consist in using Lebesgue-Besicovitch differentiation theorem that states that for almost every point, the value of an integrable function is the limit of infinitesimal averages taken about the point. }}
\end{remark}
Let $A_0=\{a_i\}_{i=1,\cdots,N_0}$ and $A_1=\{b_i\}_{i=1,\cdots,N_1}$ be two sets of $N_0$ and $N_1$ points of $\bar{\Omega}$ respectively, {\textcolor{black}{containing both}} a $P^1$-unisolvent subset. Let us denote by $\rho_0$ the operator defined by 
\begin{align*}
    \rho_0 : \left| \begin{array}{l} H^3(\Omega,\mathbb{R}^2) \rightarrow (\mathbb{R}^2)^{N_0}\\v\mapsto \rho_0(v)=(v(a_i))_{i=1,\cdots,N_0}^T \end{array} \right. ,
\end{align*}
and by $\rho_1$ the operator defined by 
\begin{align*}
    \rho_1:\left| \begin{array}{l} {\textcolor{black}{H^2(\Omega,M_2(\mathbb{R}))}} \rightarrow  {\textcolor{black}{(M_2(\mathbb{R}))^{N_1}}}\\v\mapsto \rho_1(v)=(v(b_i))_{i=1,\cdots,N_1}^T\end{array}\right. .
\end{align*}
We introduce the functionals
$$\mathcal{F}_{\epsilon,k} : \left\{ \begin{array}{l} H^3(\Omega,\mathbb{R}^2)  \rightarrow  \mathbb{R} \\ v  \mapsto  \langle \rho_0(v-f_k) \rangle_{{\textcolor{black}{\mathbb{R}^2,N_0}}}^2 + \frac{\gamma}{2} \langle \rho_1(\nabla v + \nabla v^T - \nabla f_k - \nabla f_k^T -\nabla f_k^T \nabla f_k \rangle_{M_2(\mathbb{R}),N_1}^2\\
+ \epsilon|v|_{3,\Omega,\mathbb{R}^2}^2 \end{array}\right.,  $$
and consider the problem
\begin{align}
 \left\{ \begin{array}{l} \text{Search for }u_\epsilon \in H^3(\Omega,\mathbb{R}^2)\text{ such that: } \\ \forall v \in H^3(\Omega,\mathbb{R}^2),\, \mathcal{F}_{\epsilon,k}(u_\epsilon) \leq \mathcal{F}_{\epsilon,k}(v)   \end{array} 
 \right. . \label{init_prob_approx}
\end{align}
{\textcolor{black}{We omit the explicit dependency of $u_{\epsilon}$ on $k$.}}
In the sequel, we theoretically study the model \ref{init_prob_approx}, and start by proving its equivalence with a variational formulation.
\begin{theorem}[Equivalence of problems]
 The problem \ref{init_prob_approx} is equivalent to the following variational problem: 
 \begin{align}
  \left\{ \begin{array}{l} \text{Search for } u_\epsilon \in H^3(\Omega,\mathbb{R}^2) \text{ such that }\forall v \in H^3(\Omega,\mathbb{R}^2),\\  \langle \rho_0(u_\epsilon),\rho_0(v)\rangle_{{\textcolor{black}{\mathbb{R}^2,N_0}}} + {\textcolor{black}{\frac{\gamma}{2}}}\,\langle\rho_1(\nabla u_\epsilon + \nabla u_\epsilon^T),\rho_1(\nabla v +\nabla v^T)\rangle_{M_2(\mathbb{R}),N_1} + \epsilon(u_\epsilon,v)_{3,\Omega,\mathbb{R}^2}\\
  =\langle \rho_0(v),\rho_0(f_k)\rangle_{\mathbb{R}^2,N_0} + {\textcolor{black}{\frac{\gamma}{2}}} \langle \rho_1({\textcolor{black}{\nabla v+\nabla v^T}}),\rho_1(\nabla f_k + \nabla f_k^T + \nabla f_k^T \nabla f_k)\rangle_{M_2(\mathbb{R}),N_1}.\end{array} \right. \label{init_prob_approx_variationnel}
 \end{align}
 \label{thm:equivalence_problems}
\end{theorem}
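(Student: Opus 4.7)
The plan is to exploit the quadratic structure of $\mathcal{F}_{\epsilon,k}$ and derive its Euler--Lagrange equation. The functional is the sum of three nonnegative quadratic terms in $v$: the squared discrete $\ell^2$-norm $\langle \rho_0(v-f_k)\rangle^2_{\mathbb{R}^2,N_0}$, the weighted squared discrete $\ell^2$-norm $\frac{\gamma}{2}\langle \rho_1(\nabla v+\nabla v^T-\nabla f_k-\nabla f_k^T-\nabla f_k^T\nabla f_k)\rangle^2_{M_2(\mathbb{R}),N_1}$, and the Sobolev semi-norm $\epsilon|v|_{3,\Omega,\mathbb{R}^2}^2$. Each of these is the square of a (continuous) affine functional of $v$: the operator $\rho_0$ is well-defined on $H^3(\Omega,\mathbb{R}^2)$ thanks to the continuous embedding $H^3(\Omega,\mathbb{R}^2)\hookrightarrow \mathcal{C}^0(\bar\Omega,\mathbb{R}^2)$ in dimension 2, and $v\mapsto \rho_1(\nabla v+\nabla v^T)$ is well-defined since $\nabla v+\nabla v^T\in H^2(\Omega,M_2(\mathbb{R}))\hookrightarrow\mathcal{C}^0(\bar\Omega,M_2(\mathbb{R}))$. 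Hence $\mathcal{F}_{\epsilon,k}$ is convex and Gâteaux-differentiable on $H^3(\Omega,\mathbb{R}^2)$.

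First I would compute the Gâteaux derivative of $\mathcal{F}_{\epsilon,k}$ at an arbitrary $u_\epsilon\in H^3(\Omega,\mathbb{R}^2)$ in direction $v\in H^3(\Omega,\mathbb{R}^2)$. Using the expansion $\langle\xi + tw\rangle^2 = \langle\xi\rangle^2 + 2t\langle\xi,w\rangle + t^2\langle w\rangle^2$ for each of the three squared terms and keeping the linear contribution, one obtains
\begin{align*}
\left.\frac{d}{dt}\mathcal{F}_{\epsilon,k}(u_\epsilon+tv)\right|_{t=0}
&= 2\,\langle \rho_0(u_\epsilon-f_k),\rho_0(v)\rangle_{\mathbb{R}^2,N_0}\\
&\quad+ \gamma\,\langle \rho_1(\nabla u_\epsilon+\nabla u_\epsilon^T-\nabla f_k-\nabla f_k^T-\nabla f_k^T\nabla f_k),\rho_1(\nabla v+\nabla v^T)\rangle_{M_2(\mathbb{R}),N_1}\\
&\quad+ 2\epsilon\,(u_\epsilon,v)_{3,\Omega,\mathbb{R}^2}.
\end{align*}

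Next, by convexity of $\mathcal{F}_{\epsilon,k}$, an element $u_\epsilon\in H^3(\Omega,\mathbb{R}^2)$ is a minimiser of problem \ref{init_prob_approx} if and only if its Gâteaux derivative vanishes in every direction $v$. Setting the displayed expression above to zero, dividing by $2$, and rearranging so that the terms depending on $f_k$ are collected on the right-hand side while those depending on $u_\epsilon$ stay on the left, one recovers exactly the variational identity of \ref{init_prob_approx_variationnel}. The reverse implication is immediate: any $u_\epsilon$ satisfying the variational identity cancels the Gâteaux derivative of the convex functional and is therefore a global minimiser, which finishes the equivalence.

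The argument is essentially the standard passage from a convex quadratic minimisation problem to its first-order optimality condition, so there is no deep obstacle. The only point that requires care is the justification that the pointwise evaluation operators $\rho_0$ and $\rho_1$ are continuous on $H^3(\Omega,\mathbb{R}^2)$ and $H^2(\Omega,M_2(\mathbb{R}))$ respectively, so that the quadratic expansion above is legitimate; this rests on the Sobolev embeddings $H^3\hookrightarrow \mathcal{C}^0$ and $H^2\hookrightarrow \mathcal{C}^0$ available in dimension $2$ on a sufficiently smooth domain $\Omega$, which are exactly the regularity assumptions made at the outset of Section \ref{sec:mathematical_modeling}.
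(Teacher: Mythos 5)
Your proposal is correct and follows essentially the same route as the paper: the functional $\mathcal{F}_{\epsilon,k}$ is a convex quadratic functional on the Hilbert space $H^3(\Omega,\mathbb{R}^2)$ (well defined thanks to the embeddings $H^3(\Omega,\mathbb{R}^2)\hookrightarrow \mathcal{C}^0(\bar\Omega,\mathbb{R}^2)$ and $H^2(\Omega,M_2(\mathbb{R}))\hookrightarrow \mathcal{C}^0(\bar\Omega,M_2(\mathbb{R}))$), so minimisation is equivalent to the vanishing of its first variation, and the expansion you write produces exactly the bilinear identity \ref{init_prob_approx_variationnel} after division by $2$. This is the standard characterisation of the minimiser of a quadratic functional used in the $D^m$-spline literature, and no step is missing.
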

\begin{proof}
The detailed proof is available in Section 5 of the supplementary material.
\end{proof}
We now define a new norm equivalent to the classical norm on $H^3(\Omega,\mathbb{R}^2)$, which will be useful in the following. {\textcolor{black}{We make the dependency on the set $A_0$ explicit, while the set $A_1$ is supposed to be fixed once and for all.}
\begin{lemma}[Equivalence of norms]
 The mapping defined by 
 \begin{align*}
  \|.\|_{{\textcolor{black}{A_0}},3,\Omega,\mathbb{R}^2} : \left\{ \begin{array}{l} H^3(\Omega,\mathbb{R}^2) \rightarrow \mathbb{R}\\f\mapsto \|f\|_{A_0,3,\Omega,\mathbb{R}^2}=(\langle \rho_0(f)\rangle_{\mathbb{R}^2,N_0}^2+\langle \rho_1(\nabla f + \nabla f^T)\rangle_{M_2(\mathbb{R}),N_1}^2 + |f|_{3,\Omega,\mathbb{R}^2}^2)^{\frac{1}{2}} \end{array} \right.,
 \end{align*}
 is a Hilbert norm equivalent to the norm $\|.\|_{3,\Omega,\mathbb{R}^2}$ in $H^3(\Omega,\mathbb{R}^2)$.
\label{lemma:equivalence_norms}
\end{lemma}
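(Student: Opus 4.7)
The plan is to establish the equivalence by a classical compactness-and-contradiction argument, adapted to the symmetrized-gradient setting, following the spirit of the $D^m$-spline theory developed in \cite{debroux-bib:arcangeli}. First I note that the expression associated with $\|\cdot\|_{A_0,3,\Omega,\mathbb{R}^2}^2$, namely
\begin{align*}
b(f,g) &= \langle \rho_0(f),\rho_0(g)\rangle_{\mathbb{R}^2,N_0} + \langle \rho_1(\nabla f + \nabla f^T),\rho_1(\nabla g + \nabla g^T)\rangle_{M_2(\mathbb{R}),N_1} \\
&\quad + (f,g)_{3,\Omega,\mathbb{R}^2},
\end{align*}
is a symmetric, non-negative bilinear form. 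In dimension two one has the continuous (and in fact compact) Sobolev embedding $H^3(\Omega,\mathbb{R}^2) \hookrightarrow \mathcal{C}^1(\bar{\Omega},\mathbb{R}^2)$ since $3-1 > 2/2$, so the point-evaluation operators $\rho_0$ acting on $f$ and $\rho_1$ acting on $\nabla f + \nabla f^T$ are continuous linear maps on $H^3(\Omega,\mathbb{R}^2)$. Together with the obvious inequality $|f|_{3,\Omega,\mathbb{R}^2}\le \|f\|_{3,\Omega,\mathbb{R}^2}$, this supplies a constant $C_1>0$ such that $\|f\|_{A_0,3,\Omega,\mathbb{R}^2}\le C_1\|f\|_{3,\Omega,\mathbb{R}^2}$.

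The core of the proof is the reverse inequality. I would argue by contradiction: assume there exists a sequence $(f_n)\subset H^3(\Omega,\mathbb{R}^2)$ with $\|f_n\|_{3,\Omega,\mathbb{R}^2}=1$ and $\|f_n\|_{A_0,3,\Omega,\mathbb{R}^2}\to 0$. By the Rellich-Kondrachov theorem, up to a subsequence, $f_n\rightharpoonup f$ weakly in $H^3(\Omega,\mathbb{R}^2)$ and strongly in $H^2(\Omega,\mathbb{R}^2)$ and in $\mathcal{C}^1(\bar{\Omega},\mathbb{R}^2)$. Lower semi-continuity of the seminorm forces $|f|_{3,\Omega,\mathbb{R}^2}=0$, so that $f$ is componentwise polynomial of total degree at most $2$. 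The $\mathcal{C}^1$-convergence allows passage to the limit in the point evaluations, yielding $\rho_0(f)=0$ and $\rho_1(\nabla f + \nabla f^T)=0$.

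Since $f$ is a vector of $P^2$-polynomials, $\nabla f+\nabla f^T$ is matrix-valued polynomial of degree at most $1$; using that $A_1$ contains a $P^1$-unisolvent subset, $\rho_1(\nabla f+\nabla f^T)=0$ forces $\nabla f + \nabla f^T \equiv 0$ on $\Omega$. Classical kinematics identifies the kernel of the symmetrized gradient in two dimensions as the three-dimensional space of infinitesimal rigid displacements $f(x)=a+c\,Jx$ with $a\in\mathbb{R}^2$, $c\in\mathbb{R}$ and $J$ the standard skew generator, so in particular $f\in (P^1(\Omega))^2$. Plugging this parametrization into $\rho_0(f)=0$ at the $P^1$-unisolvent subset of $A_0$ (three affinely independent points are enough to kill any such infinitesimal rigid displacement) yields $f\equiv 0$. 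Combining the strong $H^2$-convergence $\|f_n\|_{2,\Omega,\mathbb{R}^2}\to \|f\|_{2,\Omega,\mathbb{R}^2}=0$ with $|f_n|_{3,\Omega,\mathbb{R}^2}\to 0$ gives $\|f_n\|_{3,\Omega,\mathbb{R}^2}\to 0$, contradicting the normalization $\|f_n\|_{3,\Omega,\mathbb{R}^2}=1$. Hence a constant $C_2>0$ exists with $\|f\|_{3,\Omega,\mathbb{R}^2}\le C_2\|f\|_{A_0,3,\Omega,\mathbb{R}^2}$, and definiteness of $b$ follows automatically, so $\|\cdot\|_{A_0,3,\Omega,\mathbb{R}^2}$ is indeed a Hilbert norm equivalent to $\|\cdot\|_{3,\Omega,\mathbb{R}^2}$.

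The main obstacle is the identification step: one must simultaneously exploit the $P^1$-unisolvence of \emph{both} $A_0$ and $A_1$, together with the characterization of $\ker(\nabla(\cdot)+\nabla(\cdot)^T)$ as infinitesimal rigid displacements, to collapse the limit $f$ to zero. This is more delicate than in the scalar $D^m$-spline framework of \cite{debroux-bib:arcangeli} because the symmetrized-gradient constraint does not directly control $\nabla f$ pointwise but only its symmetric part; the extra degree of freedom corresponding to infinitesimal rotations must be removed by the evaluation data $\rho_0$, which is why the $P^1$-unisolvence on $A_0$ is crucial.
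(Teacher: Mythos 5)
Your proof is correct and follows essentially the same route as the paper's: continuity of the evaluation functionals via the embedding $H^3(\Omega,\mathbb{R}^2)\hookrightarrow \mathcal{C}^1(\bar{\Omega},\mathbb{R}^2)$ for the upper bound, and a compactness/contradiction argument for the lower bound whose crux is that the only $f\in (P^2(\Omega))^2$ with $\rho_0(f)=0$ and $\rho_1(\nabla f+\nabla f^T)=0$ is $f\equiv 0$, obtained by combining the $P^1$-unisolvence of subsets of $A_1$ and $A_0$ with the identification of $\ker(\nabla(\cdot)+\nabla(\cdot)^T)$ on a connected domain as the space of infinitesimal rigid displacements. No gaps.
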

\begin{proof}
The detailed proof is given in Section 6 of the supplementary material.
 \end{proof}
We are now able to prove the existence and uniqueness of {\textcolor{black}{the}} minimizer.
\begin{theorem}[Existence and uniqueness of a minimizer]
 The variational problem \ref{init_prob_approx_variationnel} admits a unique solution.
 \label{thm:existence_uniqueness_min_approx}
\end{theorem}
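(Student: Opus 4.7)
The plan is to apply the Lax-Milgram theorem to the variational formulation \ref{init_prob_approx_variationnel} established in Theorem \ref{thm:equivalence_problems}. Denote the bilinear form on $H^3(\Omega,\mathbb{R}^2)\times H^3(\Omega,\mathbb{R}^2)$ appearing on the left-hand side by
\[
 a(u,v)=\langle \rho_0(u),\rho_0(v)\rangle_{\mathbb{R}^2,N_0}+\tfrac{\gamma}{2}\,\langle\rho_1(\nabla u+\nabla u^T),\rho_1(\nabla v+\nabla v^T)\rangle_{M_2(\mathbb{R}),N_1}+\epsilon (u,v)_{3,\Omega,\mathbb{R}^2},
\]
and the linear form on the right-hand side by $L(v)$. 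Bilinearity and symmetry of $a$ are immediate from the definitions of $\rho_0$, $\rho_1$ and the usual inner product on $H^3(\Omega,\mathbb{R}^2)$.

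Next I would verify continuity of $a$ and $L$. Because $\Omega\subset\mathbb{R}^2$, the Sobolev embeddings $H^3(\Omega,\mathbb{R}^2)\hookrightarrow \mathcal{C}^0(\bar\Omega,\mathbb{R}^2)$ and $H^2(\Omega,M_2(\mathbb{R}))\hookrightarrow \mathcal{C}^0(\bar\Omega,M_2(\mathbb{R}))$ hold, so $\rho_0$ and $\rho_1\circ(\nabla(\cdot)+\nabla(\cdot)^T)$ are continuous linear operators from $H^3(\Omega,\mathbb{R}^2)$ into their respective finite-dimensional target spaces. Applying Cauchy-Schwarz term by term then yields $|a(u,v)|\le C\|u\|_{3,\Omega,\mathbb{R}^2}\|v\|_{3,\Omega,\mathbb{R}^2}$, and analogously $|L(v)|\le C_k\|v\|_{3,\Omega,\mathbb{R}^2}$ (the constant depending on $f_k\in W^{1,\infty}$, hence on $\nabla f_k^T\nabla f_k\in L^\infty\hookrightarrow L^2$, so $\rho_1(\nabla f_k+\nabla f_k^T+\nabla f_k^T\nabla f_k)$ is well defined and bounded).

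The central step, and the only one requiring real content, is coercivity. Observe directly that
\[
 a(v,v)=\langle\rho_0(v)\rangle_{\mathbb{R}^2,N_0}^2+\tfrac{\gamma}{2}\,\langle\rho_1(\nabla v+\nabla v^T)\rangle_{M_2(\mathbb{R}),N_1}^2+\epsilon\,|v|_{3,\Omega,\mathbb{R}^2}^2\ge \min\!\left(1,\tfrac{\gamma}{2},\epsilon\right)\,\|v\|_{A_0,3,\Omega,\mathbb{R}^2}^2,
\]
where $\|\cdot\|_{A_0,3,\Omega,\mathbb{R}^2}$ is the norm introduced in Lemma \ref{lemma:equivalence_norms}. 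Invoking that lemma, which asserts equivalence of $\|\cdot\|_{A_0,3,\Omega,\mathbb{R}^2}$ and the canonical norm $\|\cdot\|_{3,\Omega,\mathbb{R}^2}$, yields a constant $\nu>0$ with $a(v,v)\ge \nu\,\|v\|_{3,\Omega,\mathbb{R}^2}^2$ for every $v\in H^3(\Omega,\mathbb{R}^2)$. This is the main obstacle of the proof: without Lemma \ref{lemma:equivalence_norms}, the first two terms in $a(v,v)$ only control point evaluations and a symmetrised first-order tensor sampled at finitely many points, and it is not obvious that, combined with the third-order semi-norm, they dominate the full $H^3$-norm. The $P^1$-unisolvence assumption on $A_0$ (and on $A_1$ for the symmetrised gradient piece) is exactly what is used in the lemma to rule out nontrivial elements in the kernel.

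With continuity and coercivity of $a$ together with continuity of $L$ established, the Lax-Milgram theorem yields a unique $u_\epsilon\in H^3(\Omega,\mathbb{R}^2)$ satisfying \ref{init_prob_approx_variationnel}, and by Theorem \ref{thm:equivalence_problems} this $u_\epsilon$ is the unique minimiser of \ref{init_prob_approx}. Alternatively, since $a$ is symmetric positive definite, one may observe that $\mathcal{F}_{\epsilon,k}(v)=\tfrac{1}{2}a(v-u_\epsilon,v-u_\epsilon)+\text{const}$ after completing the square, which gives existence and uniqueness simultaneously via the direct method applied to a strictly convex, continuous, coercive functional on the Hilbert space $H^3(\Omega,\mathbb{R}^2)$.
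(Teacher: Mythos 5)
Your proof is correct and follows essentially the same route as the paper's: continuity of the bilinear form $a$ and the linear form $L$ via the Sobolev embedding $H^3(\Omega,\mathbb{R}^2)\hookrightarrow\mathcal{C}^1(\bar{\Omega},\mathbb{R}^2)$, coercivity via the norm equivalence of Lemma \ref{lemma:equivalence_norms}, and then Lax--Milgram (equivalently, the direct method applied to the strictly convex, continuous, coercive quadratic functional, using Theorem \ref{thm:equivalence_problems}). The only cosmetic slip is the factor $\tfrac{1}{2}$ in your completed square, which should read $\mathcal{F}_{\epsilon,k}(v)=a(v-u_\epsilon,v-u_\epsilon)+\mathrm{const}$; this does not affect the argument.
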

\begin{proof}
 The detailed proof is given in Section 7 of the supplementary material.
\end{proof}
We now focus on a convergence result. Let $D$ be a subset of ${\textcolor{black}{]0,+\infty[}}$ for which $0$ is an accumulation point. For any $d\in D$, let $A^d$ be a set of $N=N(d)$ distinct points from $\bar{\Omega}$ that contains a $P^1$-unisolvent subset. We assume that $\underset{x\in \Omega}{\sup}\, \delta(x,A^d)=d$, where $\delta$ is the Euclidean distance in $\mathbb{R}^2$. Thus $d$ is the radius of the biggest sphere included in $\Omega$ that contains no point from $A^d$. Also $d$ is bounded and $\underset{d\rightarrow 0}{\lim}\,\underset{x \in \Omega}{\sup}\,\delta(x,A^d)=0$. For any $d\in D$, let us denote by $\rho^d$ the mapping defined by 
\begin{align*}
 \rho^d : \left\{ \begin{array}{l} H^3(\Omega,\mathbb{R}^2) \rightarrow (\mathbb{R}^2)^N\\ v \mapsto \rho^d(v) = \left((v(a))_{a \in A^d}\right)^T \end{array}\right. ,
\end{align*}
{\textcolor{black}{and by}} $\|.\|_{A^d,3,\Omega,\mathbb{R}^2}$, the norm  defined by 
\begin{align*}
 \|f\|_{A^d,3,\Omega,\mathbb{R}^2}=[\langle \rho^d(f)\rangle_N^2+\langle \rho_1(\nabla f + \nabla f^T)\rangle_{M_2(\mathbb{R}),N_1}^2+|f|_{3,\Omega,\mathbb{R}^2}]^{\frac{1}{2}}.
\end{align*}
As shown in the previous lemma, $\|.\|_{A^d,3,\Omega,\mathbb{R}^2}$ is equivalent to the norm $\|.\|_{3,\Omega,\mathbb{R}^2}$ in $H^3(\Omega,\mathbb{R}^2)$ {\textcolor{black}{\textemdash but not uniformly in $d$\textemdash}}.
\begin{lemma}
 Let $B_1=\{b_{01},\cdots,b_{0,\mathcal{N}}\}$ be a fixed $P^1$-unisolvent subset of $\bar{\Omega}$. By hypothesis, $0\in \bar{D}$, and $\underset{d\rightarrow 0}{\lim}\,\underset{x\in \Omega}{\sup}\, \delta(x,A^d) = 0$ holds, so 
 \begin{align*}
  \forall j=1,\cdots,\mathcal{N},\, \exists (a_{0,j}^d)_{d\in D},\,(\forall d \in D, a_{0,j}^d \in A^d) \text{ and } b_{0j} = \underset{d\rightarrow 0}{\lim}\,a_{0j}^d.
 \end{align*}
For any $d\in D$, let $A_0^d$ be the set $\{a_{01}^d,\cdots,a_{0,\mathcal{N}}^d\}$ and let $\|.\|_{A_0^d,3,\Omega,\mathbb{R}^2}$ be the norm defined by $\forall f \in H^3(\Omega,\mathbb{R}^2)$,
\begin{align*}
 \|f\|_{A_0^d,3,\Omega,\mathbb{R}^2} = [\underset{j=1}{\overset{\mathcal{N}}{\sum}}\langle f(a_{0j}^d) \rangle_{\mathbb{R}^2}^2+\underset{i=1}{\overset{N_1}{\sum}}{\textcolor{black}{\|\nabla f(b_i) + \nabla f(b_i)^T\|^2}} + |f|_{3,\Omega,\mathbb{R}^2}^2]^{\frac{1}{2}}.
\end{align*}
Then there exists $\eta>0$ such that for any $d\leq \eta$, $\|.\|_{A_0^d,3,\Omega,\mathbb{R}^2}$ is a norm on $H^3(\Omega,\mathbb{R}^2)$ uniformly equivalent on $D\cap ]0,\eta]$ to the norm $\|.\|_{3,\Omega,\mathbb{R}^2}$.
\label{lemma:convergence_approx_1}
\end{lemma}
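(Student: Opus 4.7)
The strategy is the standard norm-equivalence argument tailored to spline approximation spaces: upper bound by Sobolev embedding, uniform lower bound by a compactness-contradiction scheme. Since $\Omega \subset \mathbb{R}^2$, we have continuous embeddings $H^3(\Omega,\mathbb{R}^2) \hookrightarrow \mathcal{C}^{1,\alpha}(\bar\Omega,\mathbb{R}^2)$ for some $\alpha>0$, compactly into $\mathcal{C}^1(\bar\Omega,\mathbb{R}^2)$, so pointwise evaluations of $f$ and of $\nabla f$ are well-defined and continuous with respect to the $H^3$-topology. This yields the upper bound immediately: $|f(a_{0j}^d)|^2 \leq C\|f\|_{3,\Omega,\mathbb{R}^2}^2$ and $\|\nabla f(b_i)+\nabla f(b_i)^T\|^2 \leq C\|f\|_{3,\Omega,\mathbb{R}^2}^2$ with $C$ independent of $d$; summing with $|f|_{3,\Omega,\mathbb{R}^2}^2$ gives $\|f\|_{A_0^d,3,\Omega,\mathbb{R}^2} \leq C_2 \|f\|_{3,\Omega,\mathbb{R}^2}$ uniformly in $d$.

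For the uniform lower bound, I would argue by contradiction. Suppose no $\eta>0$ works, so there exist sequences $d_n \to 0$ in $D$ and $f_n \in H^3(\Omega,\mathbb{R}^2)$ normalised by $\|f_n\|_{3,\Omega,\mathbb{R}^2}=1$ with $\|f_n\|_{A_0^{d_n},3,\Omega,\mathbb{R}^2} \to 0$. Rellich--Kondrachov provides a subsequence (not relabelled) with $f_n \rightharpoonup f_\infty$ weakly in $H^3$, strongly in $H^2$, and strongly in $\mathcal{C}^1(\bar\Omega,\mathbb{R}^2)$. Weak lower semicontinuity of $|\cdot|_{3,\Omega,\mathbb{R}^2}$ forces $|f_\infty|_{3,\Omega,\mathbb{R}^2}=0$, so each component of $f_\infty$ is a bivariate polynomial of total degree at most $2$. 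Passing to the limit via uniform $\mathcal{C}^1$-convergence together with $a_{0j}^{d_n} \to b_{0j}$ yields $f_\infty(b_{0j}) = 0$ for all $j=1,\dots,\mathcal{N}$ and $\nabla f_\infty(b_i)+\nabla f_\infty(b_i)^T = 0$ for all $i=1,\dots,N_1$.

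The crux is then the algebraic conclusion that $f_\infty \equiv 0$. Since the symmetric gradient $\varepsilon(f_\infty):=\frac{1}{2}(\nabla f_\infty + \nabla f_\infty^T)$ is an affine $M_2(\mathbb{R})$-valued function vanishing at $\{b_i\}_{i=1,\dots,N_1}$, which contains a $P^1$-unisolvent subset by the hypothesis on $A_1$, we obtain $\varepsilon(f_\infty) \equiv 0$. Hence $f_\infty$ is an infinitesimal rigid motion in 2D, of the form $f_\infty(x) = a + \lambda R x$ with $R$ the rotation by $\pi/2$; its two scalar components are therefore degree-$1$ polynomials in $x$. The $P^1$-unisolvence of $B_1$, applied componentwise together with the conditions $f_\infty(b_{0j})=0$, forces $a=0$ and $\lambda=0$, so $f_\infty \equiv 0$. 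The strong $H^2$-convergence then yields $\|f_n\|_{H^2} \to 0$, which combined with $|f_n|_{3,\Omega,\mathbb{R}^2} \to 0$ contradicts $\|f_n\|_{3,\Omega,\mathbb{R}^2}=1$. The uniform lower bound follows, and in particular $\|\cdot\|_{A_0^d,3,\Omega,\mathbb{R}^2}$ is a genuine norm on $H^3(\Omega,\mathbb{R}^2)$ for $d\leq \eta$. The main obstacle I anticipate is orchestrating the two distinct uses of $P^1$-unisolvence --- once on $\{b_i\}$ to kill $\varepsilon(f_\infty)$ and once on $B_1$ to kill the resulting rigid motion --- together with verifying, via the openness of affine independence under perturbation, that $A_0^d$ inherits the $P^1$-unisolvence of $B_1$ for $d$ small enough, so that the norm character (not only the equivalence) is preserved for $d \leq \eta$.
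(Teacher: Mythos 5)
Your proof is correct and follows essentially the same route as the paper's: the standard $D^m$-spline norm-equivalence argument (Sobolev embedding $H^3(\Omega,\mathbb{R}^2)\hookrightarrow \mathcal{C}^1(\bar\Omega,\mathbb{R}^2)$ for the uniform upper bound, then a compactness--contradiction scheme in which the weak limit lies in the kernel $(P_2)^2$ of the seminorm and is annihilated by the point conditions). In particular you correctly identified the one nonstandard step, namely the two-stage use of $P^1$-unisolvence --- first on $A_1$ to force $\varepsilon(f_\infty)\equiv 0$ so that $f_\infty$ is an infinitesimal rigid displacement, then on $B_1$ to kill that affine field --- which is exactly how the paper handles the symmetrized-gradient term.
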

\begin{proof}
 The detailed proof is available Section 8 of the supplementary material.
\end{proof}
{\textcolor{black}{Equipped with this result, we are able to}} prove a convergence result on the following problem: 
\begin{align}
 \left\{ \begin{array}{l} \text{Search for } u_\epsilon^d \in H^3(\Omega,\mathbb{R}^2) \text{ such that } \forall v \in H^3(\Omega,\mathbb{R}^2),\\ \langle \rho^d(u_\epsilon^d-f_k) \rangle_{\mathbb{R}^2,N}^2 + \frac{\gamma}{2}\langle \rho_1(\nabla u_\epsilon^d +(\nabla u_\epsilon^d)^T-\nabla f_k - \nabla f_k^T - \nabla f_k^T \nabla f_k)\rangle_{M_2(\mathbb{R}),N_1}^2\\
 +\epsilon |u_\epsilon^d|_{3,\Omega,\mathbb{R}^2}^2  \leq \langle \rho^d(v-f_k) \rangle_{\mathbb{R}^2,N}^2 +{\textcolor{black}{\frac{\gamma}{2}}} \langle \rho_1(\nabla v + (\nabla v)^T-\nabla f_k -\nabla f_k^T \\
- \nabla f_k^T \nabla f_k)\rangle_{M_2(\mathbb{R}),N_1}^2
 +\epsilon |v|_{3,\Omega,\mathbb{R}^2}^2, 
 \end{array} \right. . \label{problem_convergence_1}
\end{align}
\begin{theorem}[Convergence]
 For any $d\in D$, we denote by $u_\epsilon^d$ the unique solution to problem \ref{problem_convergence_1} for $\epsilon$ fixed. Then under the above assumptions, there exists a subsequence $(u_\epsilon^{d_l})$ with $\underset{l\rightarrow +\infty}{\lim}\,d_l=0$ such that
 \begin{align*}
  u_\epsilon^{d_l} \underset{l\rightarrow +\infty}{\rightharpoonup} f_k
 \end{align*}
in $H^3(\Omega,\mathbb{R}^2)$, and $\underset{k\rightarrow +\infty}{\lim}\,\underset{l\rightarrow +\infty}{\lim}\, \|u_\epsilon^{d_l} - u\|_{1,\infty}=0$.
\label{thm:convergence_approx_1}
\end{theorem}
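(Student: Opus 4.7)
The plan is to derive a uniform $H^3$-bound on $u_\epsilon^d$ by testing problem~\ref{problem_convergence_1} with the admissible competitor $v = f_k$, extract a weakly convergent subsequence, identify its weak limit as $f_k$ through a sampling-density contradiction, and then deduce the iterated limit from a triangle inequality combined with the density property of $(f_k)$.

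The substitution $v = f_k$ annihilates the $\rho^d$ term and yields
\begin{align*}
&\langle \rho^{d_l}(u_\epsilon^{d_l} - f_k)\rangle_{\mathbb{R}^2,N}^2 + \frac{\gamma}{2}\langle \rho_1(\nabla u_\epsilon^{d_l} + (\nabla u_\epsilon^{d_l})^T - \nabla f_k - \nabla f_k^T - \nabla f_k^T\nabla f_k)\rangle_{M_2(\mathbb{R}),N_1}^2 \\
&\quad + \epsilon\,|u_\epsilon^{d_l}|_{3,\Omega,\mathbb{R}^2}^2 \;\leq\; K_k,
\end{align*}
with $K_k := \frac{\gamma}{2}\langle \rho_1(\nabla f_k^T\nabla f_k)\rangle_{M_2(\mathbb{R}),N_1}^2 + \epsilon|f_k|_{3,\Omega,\mathbb{R}^2}^2$ independent of $d_l$. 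Since $A_0^{d_l}$ has fixed cardinality $\mathcal{N}$ and $\|\rho_0^{d_l}(f_k)\|^2 \leq \mathcal{N}\|f_k\|_{\infty}^2$, the splitting $\|\rho_0^{d_l}(u_\epsilon^{d_l})\|^2 \leq 2\|\rho_0^{d_l}(u_\epsilon^{d_l}-f_k)\|^2 + 2\|\rho_0^{d_l}(f_k)\|^2$ together with the analogous bound on $\rho_1(\nabla u_\epsilon^{d_l}+(\nabla u_\epsilon^{d_l})^T)$ controls $\|u_\epsilon^{d_l}\|_{A_0^{d_l},3,\Omega,\mathbb{R}^2}$ uniformly in $d_l$. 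Lemma~\ref{lemma:convergence_approx_1} then promotes this to a uniform $H^3$-bound for $d_l \leq \eta$, so that, up to extraction, $u_\epsilon^{d_l} \rightharpoonup u^*$ in $H^3(\Omega,\mathbb{R}^2)$. Because $\Omega \subset \mathbb{R}^2$, the compact Sobolev embedding $H^3(\Omega,\mathbb{R}^2) \hookrightarrow C^1(\bar{\Omega},\mathbb{R}^2)$ upgrades this to strong convergence in $C^1$.

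The main obstacle is showing $u^* = f_k$, which I would tackle by contradiction. If $u^*(x_0) \neq f_k(x_0)$ at some $x_0 \in \Omega$, continuity of $u^*-f_k$ provides $r,\delta > 0$ with $B(x_0, 2r) \subset \Omega$ and $|u^* - f_k| \geq 2\delta$ on this ball; the $C^1$-strong convergence then gives $|u_\epsilon^{d_l} - f_k| \geq \delta$ on $B(x_0, 2r)$ for all $l$ large enough. The hypothesis $\sup_{x\in\Omega}\delta(x, A^{d_l}) = d_l$ implies $B(x_0, r) \subset \bigcup_{a \in A^{d_l}\cap B(x_0, r + d_l)} B(a, d_l)$, so an area comparison yields $|A^{d_l}\cap B(x_0, 2r)| \geq (r/d_l)^2$ once $d_l < r$. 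Summing $|u_\epsilon^{d_l}-f_k|^2$ only over these points,
\[
\langle \rho^{d_l}(u_\epsilon^{d_l} - f_k)\rangle_{\mathbb{R}^2,N}^2 \;\geq\; \frac{r^2}{d_l^2}\,\delta^2 \;\longrightarrow\; +\infty,
\]
contradicting the bound by $K_k$ above. Hence $u^* = f_k$, which yields the weak convergence statement.

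For the iterated limit, the triangle inequality
\[
\|u_\epsilon^{d_l} - u\|_{1,\infty} \;\leq\; \|u_\epsilon^{d_l} - f_k\|_{1,\infty} + \|f_k - u\|_{1,\infty},
\]
together with the $C^1$-strong convergence just established (which controls the $W^{1,\infty}$-norm on $\bar{\Omega}$), gives $\limsup_{l\to\infty}\|u_\epsilon^{d_l} - u\|_{1,\infty} \leq \|f_k - u\|_{1,\infty}$, which tends to $0$ as $k\to\infty$ by the density choice $f_k \to u$ in $W^{1,\infty}(\Omega,\mathbb{R}^2)$.
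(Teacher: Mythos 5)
Your proposal is correct and follows essentially the same route as the paper's argument: testing problem \ref{problem_convergence_1} with the admissible competitor $v=f_k$ to obtain bounds on the fidelity terms and on $|u_\epsilon^{d}|_{3,\Omega,\mathbb{R}^2}$ that are uniform in $d$, invoking Lemma \ref{lemma:convergence_approx_1} to upgrade this to a uniform $H^3$ bound, extracting a weakly convergent subsequence, identifying the limit as $f_k$ by exploiting that the number of sample points in any fixed ball grows like $d_l^{-2}$ while the fidelity sum stays bounded, and concluding via the compact embedding $H^3(\Omega,\mathbb{R}^2)\hookrightarrow C^1(\bar{\Omega},\mathbb{R}^2)$ together with $f_k\to u$ in $W^{1,\infty}(\Omega,\mathbb{R}^2)$. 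The point-counting contradiction is exactly the right mechanism for identifying the weak limit here (a pointwise argument at nearest sample points alone would only give an $O(\sqrt{K_k})$ discrepancy for fixed $\epsilon$), and your covering estimate makes it rigorous.
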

\begin{proof}
 The detailed proof is available in Section 9 of the supplementary material.
\end{proof}
\textcolor{black}{An alternative convergence study is given in Section 10 of the supplementary material.}
\subsection{Numerical Resolution of the Third Approach}\label{sub-sec:numerical_resolution}
We now turn to the discretisation of the variational problem associated with \ref{thm:existence_uniqueness_min_approx} {\textcolor{black}{in which $u$ is a substitute for $f_k$}}. To do so, we use standard notations of the finite element theory similar to those in \cite{debroux-bib:arcangeli,debroux-bib:ciarlet}. Let $\mathcal{H}$ be an open bounded subset of $]0,+\infty[$ admitting 0 as accumulation point. Let us recall that the elements of class $\mathcal{C}^{k'}$ can be used for the computation of discrete $D^m$-splines (in our case, $m=3$) with $m \leq k'+1$. As a consequence, $(k',m)=(2,3)$ is a suitable combination. For all $n\in \mathbb{N}$ and for all subsets $E$ of $\mathbb{R}^2$, $Q_l(E)$ denotes the space of the restrictions of $E$ of the polynomial functions over $\mathbb{R}^2$ of degree $\leq l$ with respect to each variable. $\forall h \in \mathcal{H}$, let $(V_h)^2$ be the subspace of $H^3(\Omega,\mathbb{R}^2)$ of finite dimension with $(V_h)^2 {\hookrightarrow}\mathcal{C}^1(\bar{\Omega},\mathbb{R}^2)$. The reference finite element is the Bogner-Fox-Schmit $\mathcal{C}^2$ rectangle denoted by $(K,P_K,\Sigma_K)$.\newline
Let $(v^q)_{q=1,2}$ be the components of $v\in H^3(\Omega,\mathbb{R}^2)$ and $w_i=\begin{pmatrix}w_{i,11}&w_{i,12}\\w_{i,21}=w_{i,12}&w_{i,22} \end{pmatrix}:=\nabla u(a_i)+\nabla u(a_i)^T+\nabla u(a_i)^T\nabla u(a_i)$, $\forall i \in \left\{1,\cdots,N\right\}$. Let also $(x_i^q)_{q=1,2}$ be the components of $u(a_i)$, $\forall i \in \left\{1,\cdots,N\right\}$. Now let $M_h$ be the dimension of $V_h$ and $\left\{P_j^h\right\}_{j=1,\cdots,M_h}$ be basis functions. If we denote by $u_{\epsilon}^h$ the solution of the variational problem associated with \ref{thm:existence_uniqueness_min_approx} and approximated in $\left(V_h\right)^2$, we can thus decompose $u_{\epsilon}^h=\left(u_{\epsilon}^{h,q}\right)_{q=1,2}$ into:
\begin{align*}
\forall q =1,2,\,\,\,\exists \left(\alpha_j^q\right)_{j=1,\cdots,M_h}\in \mathbb{R},\,\,\,u_{\epsilon}^{h,q}=\displaystyle{\sum_{j=1}^{M_h}}\,\alpha_j^q\,P_j^h.
\end{align*}
Denoting by $A^h=\left(\dfrac{\partial P_j^h}{\partial x}(a_i)\right)_{1\leq i \leq N \atop 1\leq j \leq M_h}$, $B^h=\left(\dfrac{\partial P_j^h}{\partial y}(a_i)\right)_{1\leq i \leq N \atop 1\leq j \leq M_h} \in \left(M_{N\times M_h}(\mathbb{R})\right)^2$, $C^h=\left(P_{j}^h(a_i)\right)_{1\leq i \leq N \atop 1\leq j \leq M_h} \in M_{N\times M_h}(\mathbb{R})$ and $R^h=\left(\left(P_j^h,P_i^h\right)_{3,\Omega,\mathbb{R}}\right)_{1\leq i,j \leq M_h}$ and taking successively in the variational problem
$v=\begin{pmatrix}
P_l^h\\
0
\end{pmatrix}$, $l=1,\cdots,M_h$ and then $v=\begin{pmatrix}
0\\
P_l^h
\end{pmatrix}$, $l=1,\cdots,M_h$,
the problem amounts to solving the following linear system 
\begin{align*}
\begin{blockarray}{ccc}
 & \small{M_h} & \small{M_h}  \\
  \begin{block}{c(c|c)}
  &&\\
   \small{M_h} & 2\gamma\,(A^h)^TA^h+\gamma\,(B^h)^TB^h\atop +(C^h)^TC^h+\epsilon\, \textcolor{black}{R^h} & \gamma\,(B^h)^TA_h  \\ 
   &&\\ 
   \cline{2-3}
   &&\\   
  \small{M_h} & \gamma\,(A^h)^TB_h&\gamma\,(A^h)^TA^h+2\gamma\,(B^h)^TB^h\atop +(C^h)^TC^h+\epsilon\, \textcolor{black}{R^h} \\
  \end{block}
\end{blockarray}
\quad
\begin{pmatrix}\\
\alpha^1\\
\\
\hline
\\
\alpha^2
\\
\end{pmatrix}
=
\begin{pmatrix}\\
\gamma\,(A^h)^Tw_{11}+\gamma\,(B^h)^Tw_{12}\atop+(C^h)^Tx^1\\
\\
\hline
\\
\gamma\,(A^h)^Tw_{12}+\gamma\,(B^h)^Tw_{22}\atop+(C_h)^Tx^2
\\
\end{pmatrix}
\end{align*}
\textcolor{black}{\begin{remark} A classical PCA is then performed on the obtained displacement fields using the $L^2$ scalar product for the covariance operator, i.e. $(C_{i,j})_{i=1,\cdots,M\atop j=1,\cdots,M} = \int_\Omega  v_{1,j} v_{1,i} + v_{2,j} v_{2,i}\,dx $, where $\left( v_{1,i} \,\, v_{2,i}\right)^T$ for $i=1,\cdots,M$ are the displacement fields obtained in a linear space for each image, and the resulting displacement fields are denoted by $\left( v_{1,pca,j}\,\,v_{2,pca,j}\right)^T$, where $j$ stands for the mode indexation.\end{remark}}
\section{Numerical Simulations}\label{sec:numerical_simulations}
\subsection{General framework}\label{subsec:general_framework}
{\textcolor{black}{This section is devoted to the analysis of numerical experiments. First, on a set of 19 binary images `{\it{device8-1}}' from the MPEG7 shape database (\url{http://www.dabi.temple.edu/~shape/MPEG7/dataset.html}) and then on medical images: cardiac MRI made of 8 frames per slice of size 150x150, the first image of the sequence reflecting the case where the heart is most dilated (end diastole - ED), while the last one illustrating the case where the heart is most contracted (end systole - ES), and liver dynamic MRI made of 14 frames per slice of size 195x166.\newline
The computations have been made on an \textcolor{black}{Intel Core i7} computer with \textcolor{black}{2.60}\,GHz and \textcolor{black}{8} GB memory, using \textcolor{black}{MUMPS} packages\textcolor{black}{, in a C implementation}.\newline The question of assessing the proposed model encompasses several angles of inquiry:
\begin{itemize}
\item[(i)] the qualitative evaluation of the obtained atlas in comparison to a sequential treatment of the segmentation and registration tasks (note that the model requires the segmentation step be processed first since it involves the penalisation $\|\theta_{\tilde{T}_i}-\theta_{T_i}\circ \varphi_i+\theta_R\|_{L^2(\Omega)}^2$).\newline
We do not question here the relevancy of the regularisation on the $\varphi_i$'s since in practice, the $\varphi_i$'s need to be invertible (as in the expression $\theta_{T_i}\circ \varphi_i$, $\theta_{T_i}$ is also an unknown), which is guaranteed with the proposed regulariser. As the average shape is an unknown of the problem and ground truth is not provided, the evaluation of the method itself primarily relies on visual inspection and empirical arguments {\textcolor{black}{consistent with the biological phenomena involved}};
\item[(ii)] the evaluation of the PCA in capturing strongly nonlinear geometric variations.
\end{itemize}
These two main levels of discussion dictate the structure of the section. Each subsection focuses on a specific dataset and provides both the atlas generated by our joint model, together with the first principal modes of variation. The gain of the combined approach in terms of sharp edges and in removing ghosting artefacts {\textcolor{black}{(blurring / splitting into two effects)}} is emphasised compared to a sequential treatment, as well as the accuracy of our proposed model in reflecting the high nonlinear geometric variations in comparison to the two more physically/mechanically oriented methods. {\textcolor{black}{This stage is delicate since as stated below, it requires comparing the three methodologies in the most efficient and impartial manner, and subsequently, setting the involved tuning parameters (e.g., the weight balancing the loading forces for the Cauchy stress tensor based PCA) adequately. That said, results demonstrate nevertheless and as expected, that linearised elasticity produces very small displacements, which tends to favor comparisons with the Cauchy stress tensor based PCA.}} \newline
For the sake of reproducibility, we provide in Table~\ref{parameters} the values of the tuning parameters. The coefficients $a_1$, $a_2$ and $a_3$ involved in the Ogden stored energy function affect respectively the averaged local change of length, \textcolor{black}{and the averaged local change of area}, impacting subsequently on the rigidity of the deformation. The higher the $a_i$'s are, the more rigid the deformation is. The ranges of these parameters are rather stable for the medical experiments. Parameters $\gamma_T$ and $\gamma_R$ \textcolor{black}{weighting the $L^0$ norm} control thus the balance between the $L^0$-component and the $L^2$-penalisation \textcolor{black}{together with $\lambda_R$ and $\lambda_T$}. \textcolor{black}{High} values of $\gamma_T/\gamma_R$ favor few large partitions, while \textcolor{black}{small} values yield an approximation exhibiting more jumps. {\textcolor{black}{$\lambda_R$, $\gamma_{\tilde T}$}}, and {\textcolor{black}{$\gamma_1$}} weight the fidelity term in the registration task, and thus the higher they are, the closer the deformed templates and deformed segmentations are to the mean segmentation.}}}
{\textcolor{black}{The visualisation of the main modes of variation is done as follows.}} {\textcolor{black}{We denote by $\left(v_{1,pca_1,i}\,\, v_{2,pca_1,i}\right)^T$ the $i$\textsuperscript{th} resulting displacement field (related to the $i$\textsuperscript{th} mode of variation) from the first method based on the linearisation of the stored energy function around the identity, providing in practice and as expected extremely small displacements. $\sigma_{pca_2,i}$ represents the $i$\textsuperscript{th} resulting Cauchy tensor from the second method and $\left(v_{1,pca_2,i,\delta} \,\, v_{2,pca_2,i,\delta}\right)^T$ the associated displacement field obtained with weighting parameter $\delta$ balancing the inner forces chosen equal to $1, 2, 0.3$ for the T-shape, the liver and the heart respectively, that prove to be suitable parameters for an unbiased analysis. At last,  $\left( v_{1,pca_3,i} \,\, v_{2,pca_3,i}\right)^T$ stands for the $i$\textsuperscript{th} resulting displacement field from the third method based on approximation theory. We propose visualising the $i$\textsuperscript{th} mode of variation by showing $\theta_R\circ (\mathrm{Id}+5c.10^6\begin{pmatrix}v_{1,pca_1,i}\\v_{2,pca_1,i}\end{pmatrix})$, $\theta_R\circ (\mathrm{Id}+5c\begin{pmatrix}v_{1,pca_2,i,\delta}\\v_{2,pca_2,i,\delta}\end{pmatrix})$, and $\theta_R\circ (\mathrm{Id}+50c\begin{pmatrix}v_{1,pca_3,i}\\v_{2,pca_3,i}\end{pmatrix})$, for each method respectively, with $c$ varying from $-5$ to $5$. The parameters are chosen in order to make the comparison as fair as possible.}} 
The computation times for each method and each example are provided in Table \ref{tab:execution_time}}.
\begin{table}[htp]
\centering
\begin{tabular}{|c||c|c|c|c|c|c|c|c|c|c|c|c|c|c|}
\hline 
&\scriptsize{$\scriptscriptstyle a_1$}&\scriptsize{$\scriptscriptstyle a_2$}&\scriptsize{$\scriptscriptstyle a_3$}&\scriptsize{$\scriptscriptstyle \gamma_1$}&\scriptsize{$\scriptscriptstyle \gamma_2$}&\scriptsize{$\scriptscriptstyle \gamma_3$}&\scriptsize{$\scriptscriptstyle \alpha$}&\scriptsize{$\scriptscriptstyle \beta$}&\scriptsize{$\scriptscriptstyle \gamma_R$}&\scriptsize{$\scriptscriptstyle \gamma_T$}&\scriptsize{$\scriptscriptstyle \lambda_T$}&\scriptsize{$\scriptscriptstyle \lambda_R$}&\scriptsize{$\scriptscriptstyle dt$}&\tiny{$\scriptscriptstyle {\mbox{nbIter}}$}\\ \hline 
\hline
\scriptsize{T-shape}&\scriptsize{$\scriptscriptstyle 1$}&\scriptsize{$\scriptscriptstyle 5.10^3$} &\scriptsize{$\scriptscriptstyle 0.01$}&\scriptsize{$\scriptscriptstyle 1$}&\scriptsize{$\scriptscriptstyle 8.10^{4}$}&\scriptsize{$\scriptscriptstyle 1$}&\scriptsize{$\scriptscriptstyle 10$}&\scriptsize{$\scriptscriptstyle 100$}&\scriptsize{$\scriptscriptstyle 3$}&\scriptsize{$\scriptscriptstyle 0.5$}&\scriptsize{$\scriptscriptstyle 1$}&\scriptsize{1}&\scriptsize{$\scriptscriptstyle 0.001$}&\scriptsize{$\scriptscriptstyle 100$}\\ \hline
\hline 
\tiny{Heart ED(108)-ES(101)}&\scriptsize{$\scriptscriptstyle 5$}&\scriptsize{$\scriptscriptstyle 1.10^{3}$} &\scriptsize{$\scriptscriptstyle 4$}&\scriptsize{$\scriptscriptstyle 1$}&\scriptsize{$\scriptscriptstyle 8.10^4$}&\scriptsize{$\scriptscriptstyle 1$}&\scriptsize{$\scriptscriptstyle 100$}&\scriptsize{$\scriptscriptstyle 100$}&\scriptsize{$\scriptscriptstyle 0.02$}&\scriptsize{$\scriptscriptstyle 0.03$}&\scriptsize{$\scriptscriptstyle 1.5$}&\scriptsize{$\scriptscriptstyle 1.5$}&\scriptsize{$\scriptscriptstyle 0.01$}&\scriptsize{$\scriptscriptstyle 500$}\\ \hline
\hline 
\tiny{Liver - slice 12 }&\scriptsize{$\scriptscriptstyle 5$}&\scriptsize{$\scriptscriptstyle 1.10^3$} &\scriptsize{$\scriptscriptstyle 4$}&\scriptsize{$\scriptscriptstyle 1.5$}&\scriptsize{$\scriptscriptstyle 8.10^4$}&\scriptsize{$\scriptscriptstyle 1$}&\scriptsize{$\scriptscriptstyle 10$}&\scriptsize{$\scriptscriptstyle 100$}&\scriptsize{$\scriptscriptstyle 0.05$}&\scriptsize{$\scriptscriptstyle 0.05$}&\scriptsize{$\scriptscriptstyle 1.5$}&\scriptsize{$\scriptscriptstyle 1.5$}&\scriptsize{$\scriptscriptstyle 0.01$}&\scriptsize{$\scriptscriptstyle 100$}\\ \hline
\hline 
\end{tabular}
\ \\[0.3cm]
\caption{Parameters.}
\label{parameters}
\end{table}
\begin{table}[htp]
    \centering
    \begin{tabular}{|c||c|c|c|c|}
    \hline
         \scriptsize{Execution time}&\scriptsize{Atlas generation}&
         \scriptsize{PCA $1^{\text{st}}$ method}&\scriptsize{PCA $2^{\text{nd}}$ method}&\scriptsize{PCA $3^{\text{rd}}$ method}\\ \hline  \hline
         \scriptsize{T-shape}&7 min&
         2 sec&4 min&44 sec \\ \hline
         \scriptsize{Heart ED(108)-ES(101)}&49 min&
         3 sec&10 min&3 min 27 sec \\ \hline
         \scriptsize{Liver slice 12}&26 min&
         6 sec&7 min 28 sec&13 min \\ \hline
 \hline
    \end{tabular}
    \ \\[0.3cm]
    \caption{Execution times}
    \label{tab:execution_time}
\end{table}
\subsection{T-shape example: 19 images}\label{sub-sec:T-shape}
{\textcolor{black}{The proposed method is first evaluated on a synthetic example (Figure \ref{fig:input_images}) 
to emphasise the ability of the model to generate large deformations and to produce \textcolor{black}{a physically sound average shape}. As depicted in Figure~\ref{fig:results_T-shape_comparaison_atlas}, contrary to a sequential treatment of the tasks, the joint model creates a mean object with sharp edges perfectly matched by the deformed templates, \textcolor{black}{see particularly \textcolor{black}{the bottom of the T shape}}. \textcolor{black}{The joint approach also tends to better preserve the original contrast of the images than the sequential approach.} The principal component analysis (Figure \ref{fig:results_T-shape_comparaison_pca}) shows that the first two modes of variation have an effect on the undulation of both the vertical and horizontal bars \textcolor{black}{(the first mode tends to represent the undulation in the direction \textit{bottom-left corner-top-right corner} whereas the second one tends to capture the undulation in the direction \textit{top-left corner-bottom-right corner})}, while the third mode affects the thickness of the vertical bar particularly at the junction. The fourth mode, for its part, acts more locally on the curvature of the envelope of the shape, especially on the lower part of the horizontal bar that exhibits cavities. A first observation is that the proposed model allows uncorrelating the main tendencies, which is what is expected from such an analysis. It seems (Figure \ref{fig:results_T-shape_comparaison_pca_3_methods}) that this decoupling property is not as well exemplified when applying the linearisation around identity \textcolor{black}{which is not able to recover the nonlinear variations coming from the undulation} or the Cauchy stress tensor based PCA \textcolor{black}{mixing the first, the third and the fourth modes of variation from our third approach based on approximation modelling}.
  }}
  \begin{figure}
      \centering
      \includegraphics[width=\linewidth]{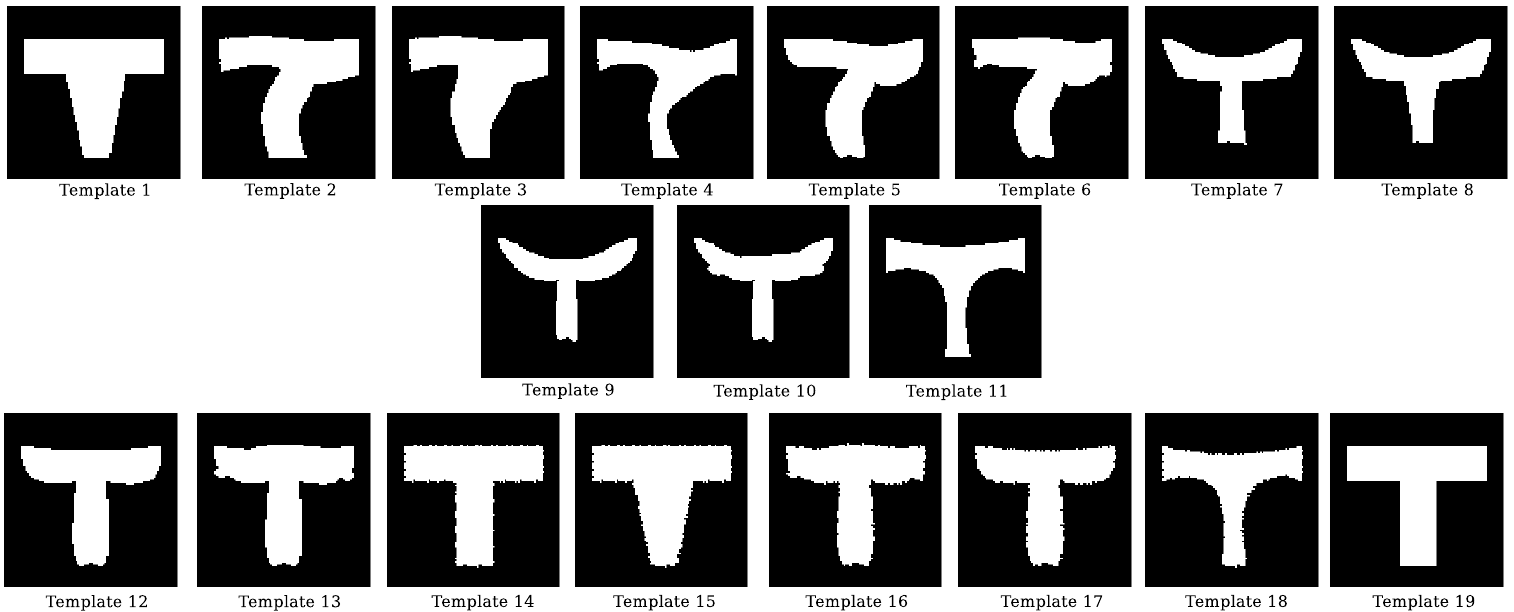}\vspace{-0.5cm}
      \caption{Input images.}
      \label{fig:input_images}
  \end{figure}
\begin{figure}[htp]
    \centering
    \includegraphics[width=\linewidth]{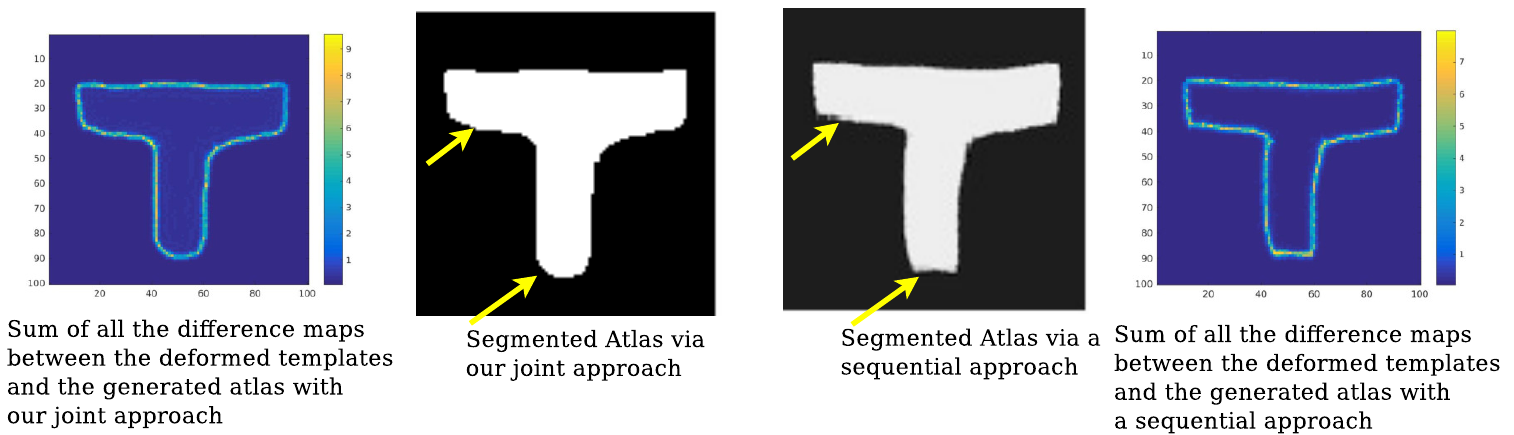}\vspace{-0.5cm}
    \caption{Comparison of the atlases generated by our joint model, and by a sequential approach with our discrepancy measure. \textcolor{black}{Visual assessment in terms of blurring artefacts, and contrast are pointed out with yellow arrows}.}
    \label{fig:results_T-shape_comparaison_atlas}
\end{figure}
\begin{figure}[htp]
    \centering
    \includegraphics[width=\linewidth]{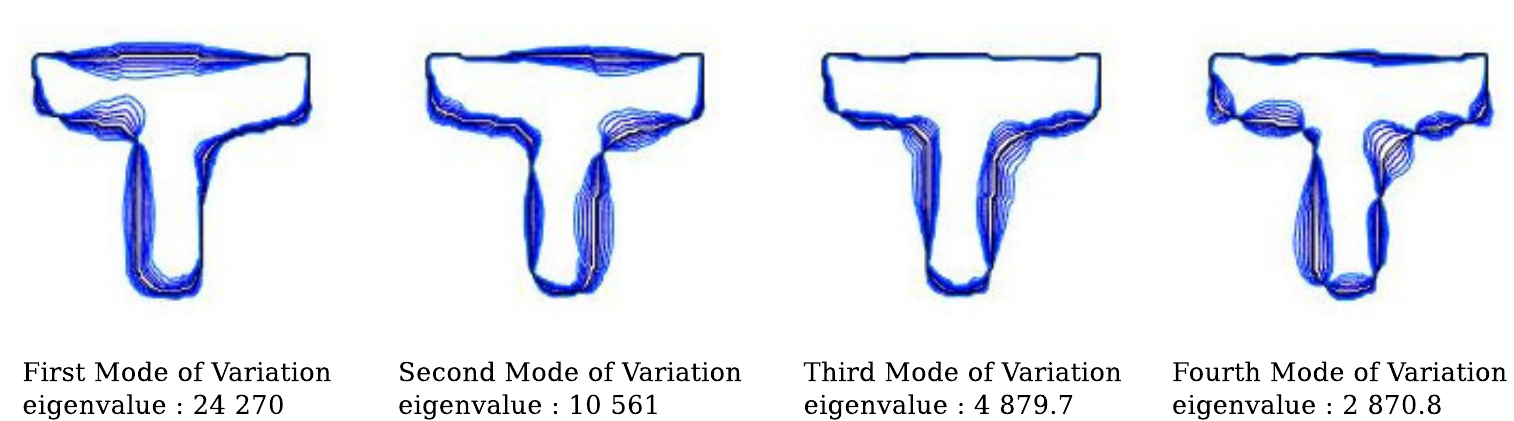}\vspace{-0.5cm}
    \caption{First four modes of variation obtained with our method based on approximation modelling via contour representations.}
    \label{fig:results_T-shape_comparaison_pca}
\end{figure}
\begin{figure}[htp]
    \centering
    \includegraphics[width=\linewidth]{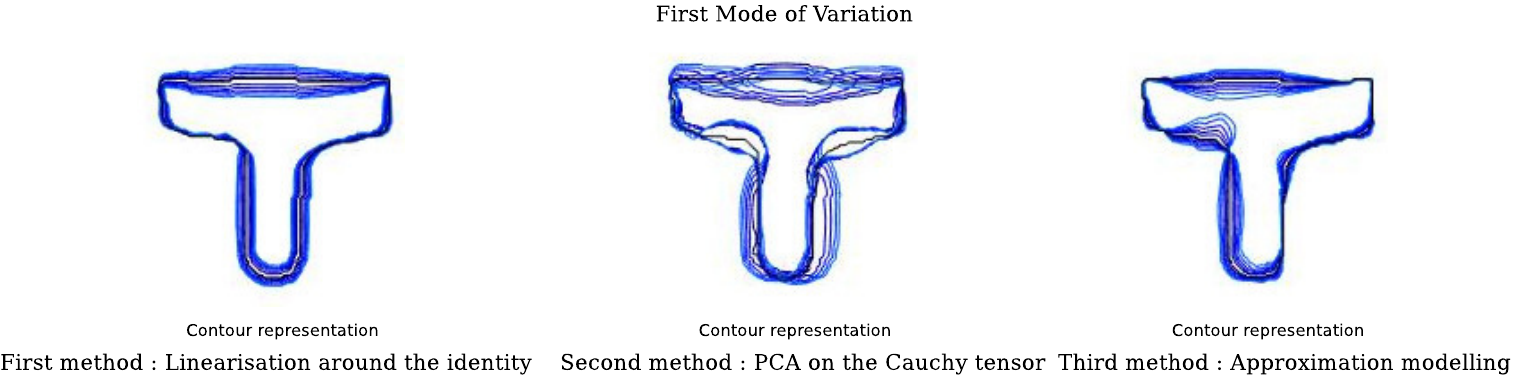}\vspace{-0.5cm}
    \caption{Comparison of the first mode of variation from three different methods using contour representations.}
    \label{fig:results_T-shape_comparaison_pca_3_methods}
\end{figure}
\subsection{Liver example: Slice 12 \cite{debroux-bib:siebenthal}}\label{sub-sec:liver_slice12}
{\textcolor{black}{The second example is dedicated to \textcolor{black}{right lobe} liver dynamic MRI \url{http://www.vision.ee.ethz.ch/~organmot/chapter_download.shtml}, the shape of this organ being influenced by the surrounding structures such as the diaphragm (\cite{debroux-bib:von_siebenthal}). It exemplifies the ability of the method to alleviate ghosting artefacts, in particular splitting into two effects, through parameter $\gamma_R$ that influences the number of phases (Figure \ref{fig:results_liver_12_joint}). The obtained atlas exhibits sharp edges with fewer artefacts (Figure \ref{fig:results_liver_12_comparaison_atlas}) than with a sequential approach. The statistical analysis is then performed (Figure \ref{fig:results_liver_12_comparaison_pca}). The first mode of variation encodes the motion in superior/inferior direction of the liver, which is consistent with the physics (\cite{debroux-bib:von_siebenthal}): respiration is largely governed by the diaphragm, and the liver, located beneath the diaphragm, is thus strongly influenced by \textcolor{black}{breathing} (pushed downwards when the diaphragm is contracted and upwards when expanded). The same applies to the kidney located in the right lower part. As for the second mode of variation, it reflects the transversal motion, particularly visible on the right excrescences exhibited by the liver \textcolor{black}{and the left wall}. A comparison of the first mode of variation obtained with both the linearisation around the identity and the Cauchy stress tensor based PCA shows that our method reflects the geometric variability slightly better in the superior/inferior direction (Figure \ref{fig:results_liver_12_comparaison_pca_3_methods}) particularly in the right lower part of the kidney.
}}
\begin{figure}[htp]
    \centering
    \includegraphics[width=\linewidth]{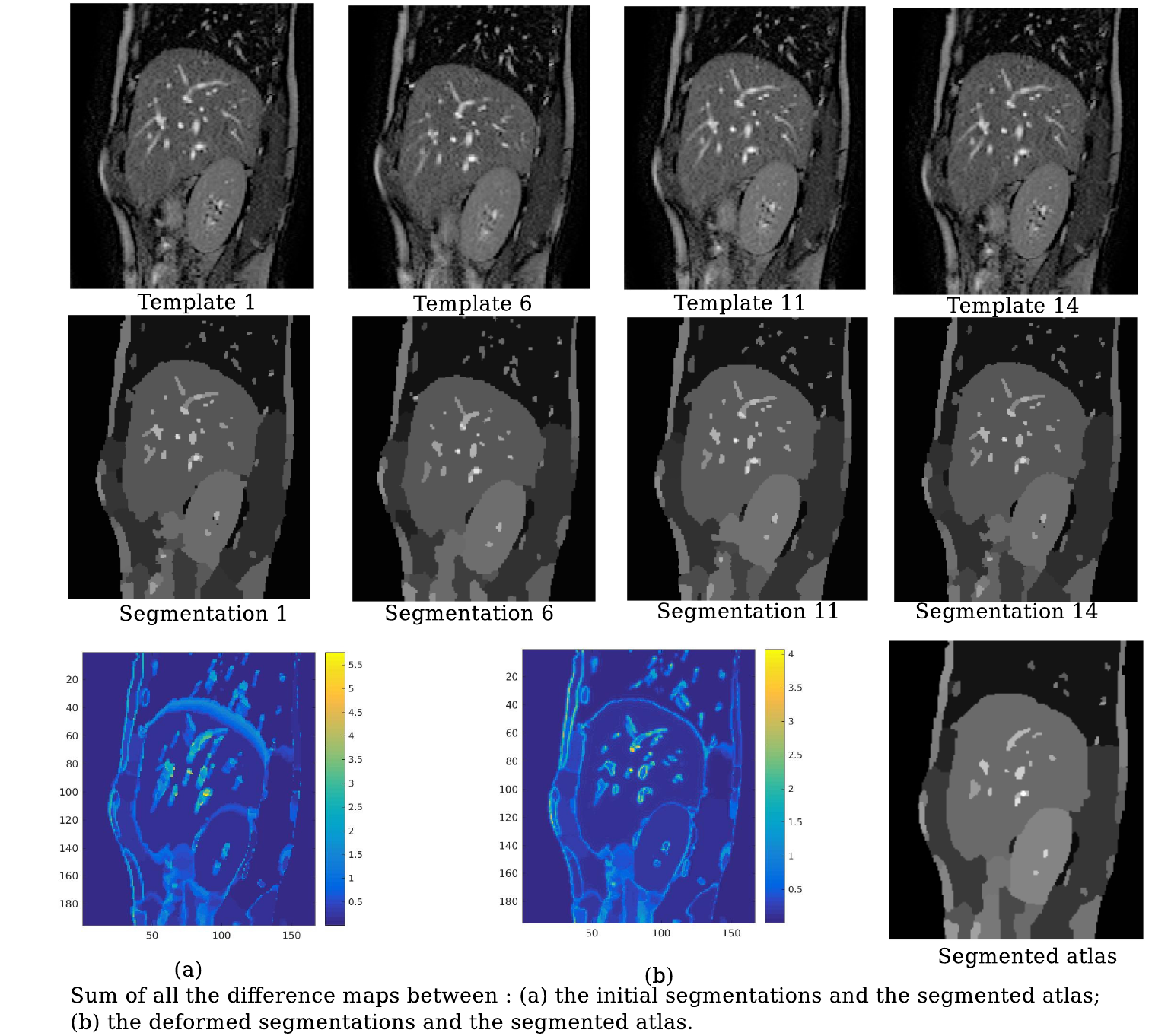}\vspace{-0.5cm}
    \caption{Some of the initial slices (1,6,11,14) with the associated segmentations given by our joint algorithm underneath. The last row displays the sum of difference maps between the initial segmentation and the segmented atlas, the deformed/registered segmentations and the segmented atlas, and the segmented atlas generated by our joint model.}
    \label{fig:results_liver_12_joint}
\end{figure}
\begin{figure}[htp]
    \centering
    \includegraphics[width=0.5\linewidth]{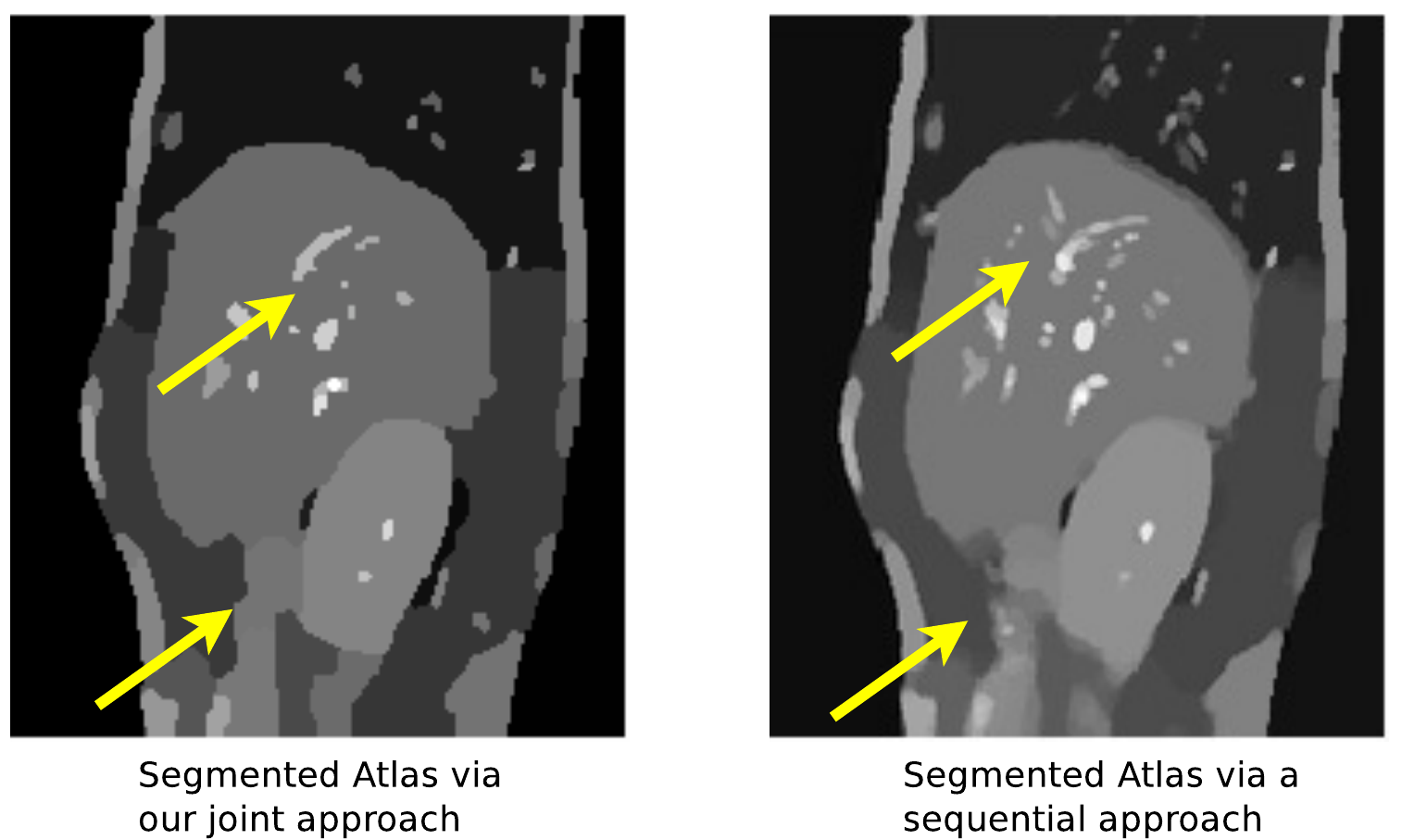}\vspace{-0.5cm}
    \caption{Comparison of the atlases generated by our joint model, and by a sequential approach with our discrepancy measure. \textcolor{black}{Visual assessment in terms of blurring and ghosting artefacts are indicated by yellow arrows}.}
    \label{fig:results_liver_12_comparaison_atlas}
\end{figure}
\begin{figure}[htp]
    \centering
    \includegraphics[width=\linewidth]{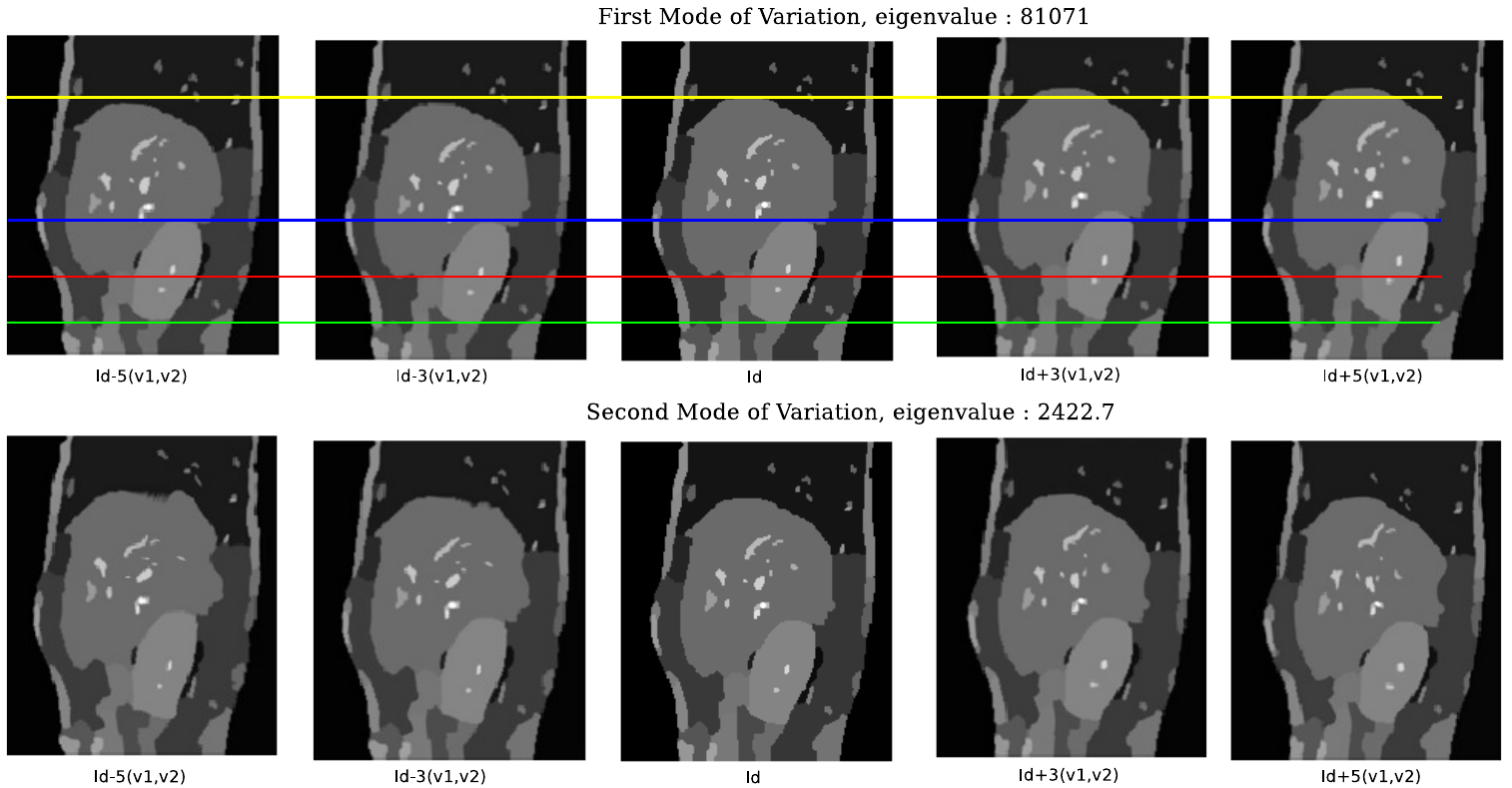}\vspace{-0.5cm}
    \caption{First two modes of variation obtained with our method based on approximation modelling. \textcolor{black}{The horizontal lines help visualise the vertical movement of the liver and the kidney.}}
    \label{fig:results_liver_12_comparaison_pca}
\end{figure}
\begin{figure}[htp]
    \centering
    \includegraphics[width=\linewidth]{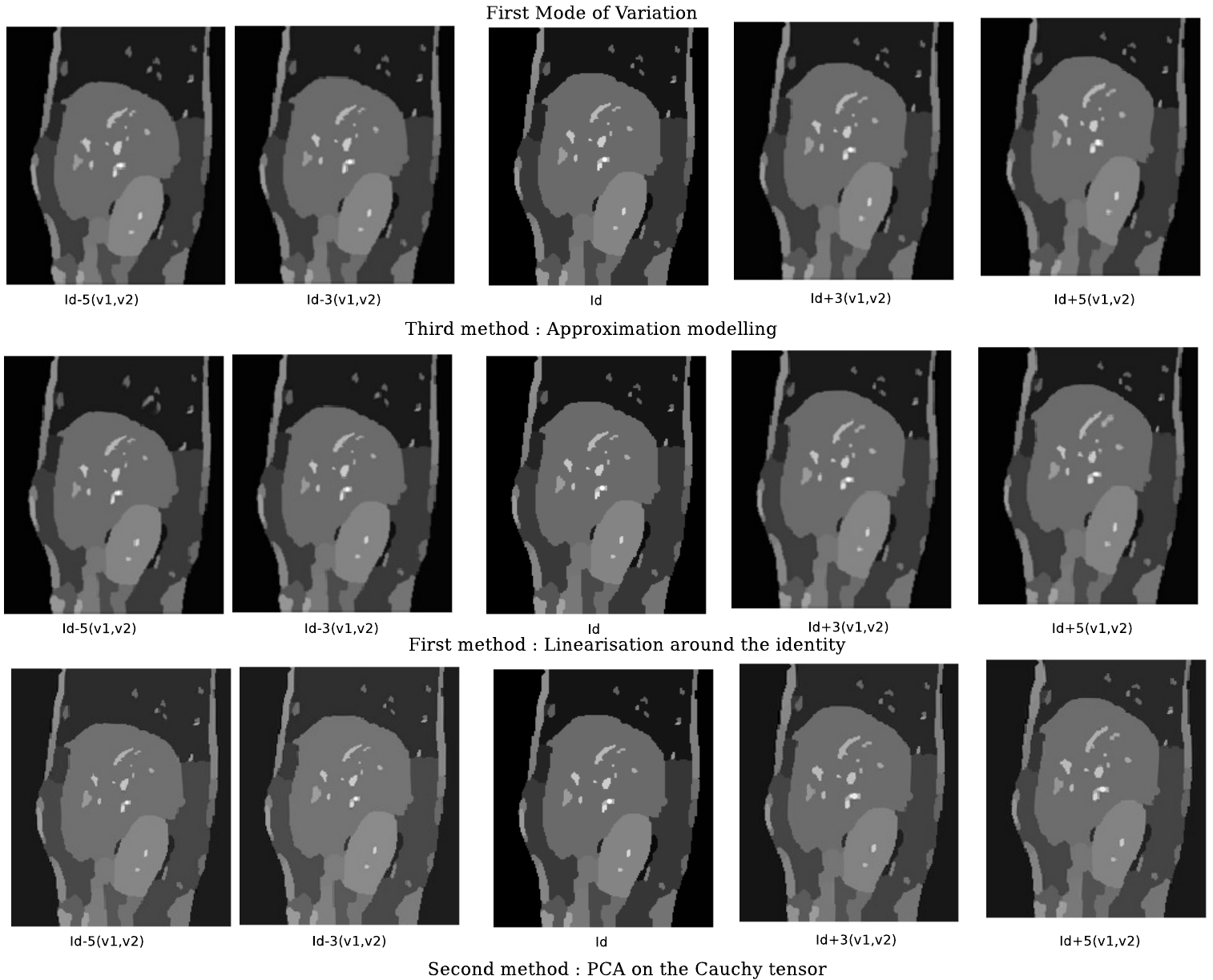}\vspace{-0.5cm}
    \caption{Comparison of the first mode of variation for 3 different methods.}
    \label{fig:results_liver_12_comparaison_pca_3_methods}
\end{figure}
\subsection{Heart example: frames ED(108)-ES(101)}\label{sub-sec:heart_101_108}
{\textcolor{black}{Lastly, the joint algorithm is applied to the set of MRI representing a cardiac cycle. The obtained segmentations (Figure \ref{fig:results_heart_100_108_joint}) are piecewise constant approximations of the initial images and reflect well the geometrical \textcolor{black}{shapes} of the template images. The deformed segmentations (involving large deformations) are close to each other and well aligned to the mean reference. The obtained atlas exhibits sharp edges, contrary to the result produced by a sequential treatment (Figure Figure \ref{fig:results_heart_100_108_comparaison_atlas}) that shows blurry artefacts. It is also anatomically consistent: the mean reference corresponds to a compromise between the full expansion and the full contraction, which is reasonable from a biological standpoint. The second step of the algorithm is then applied. The obtained first modes of variation are again consistent with the anatomical dynamic of the heart (Figure \ref{fig:results_heart_100_108_comparaison_pca}): while the first mode encodes the dilation/contraction of the right ventricular chamber in the transverse direction, the second mode of variation conveys the vertical stretching of the left ventricular chamber. Our approximation-based PCA thus allows to disconnect these two movements, this property being less visible in the case of the Cauchy stress tensor based PCA (Figure \ref{fig:results_heart_100_108_comparaison_pca_3_methods}) for which the first mode of variation encodes both tendencies.  
}}
\begin{figure}[htp]
    \centering
    \includegraphics[width=\linewidth]{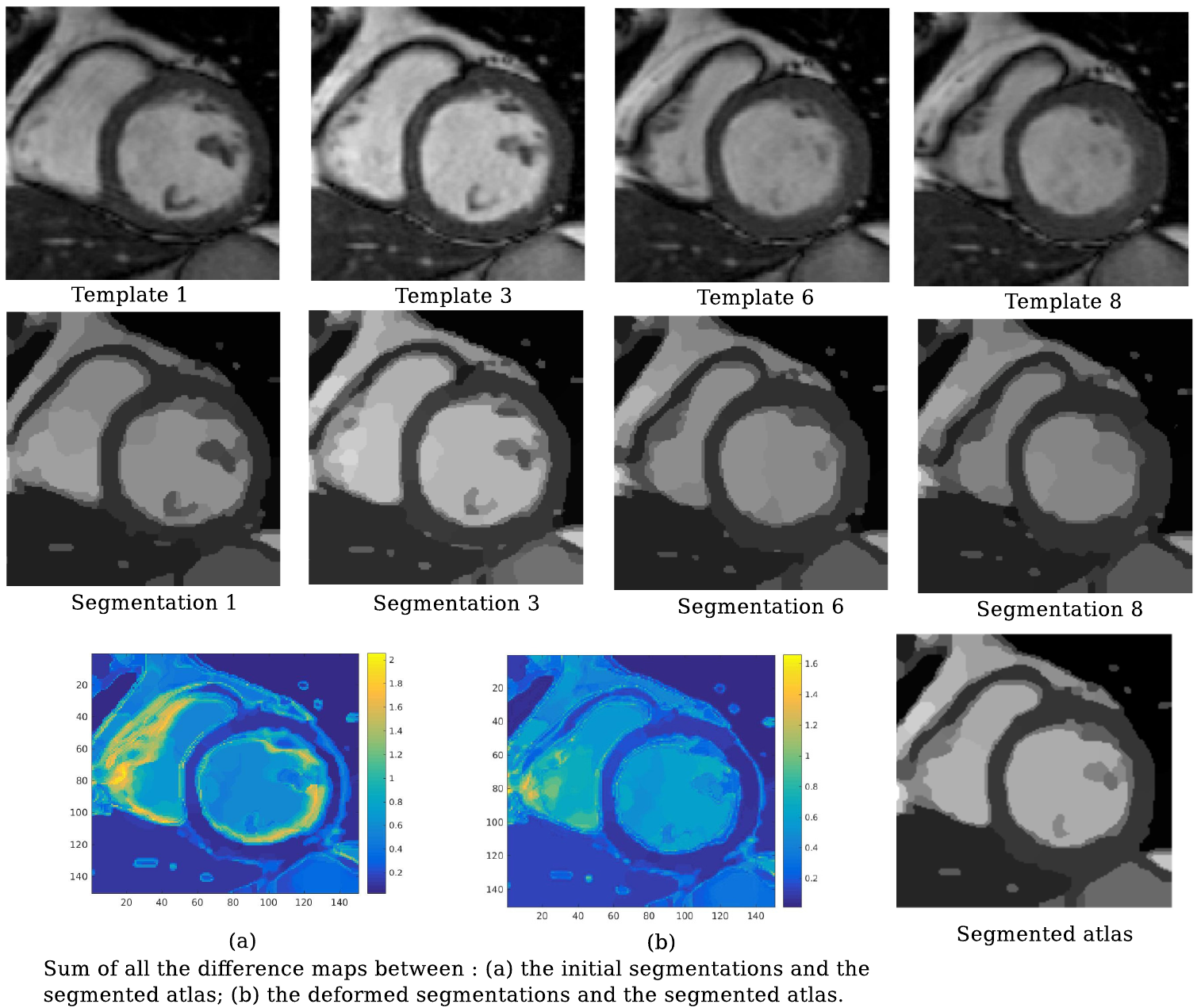}\vspace{-0.5cm}
    \caption{Some of the initial images (1,3,6,8) with the associated segmentations given by our joint algorithm underneath. The last line displays the sum of difference maps between the initial segmentation and the segmented atlas, the deformed/registered segmentations and the segmented atlas, and the segmented atlas generated by our joint model.}
    \label{fig:results_heart_100_108_joint}
\end{figure}
\begin{figure}[htp]
    \centering
    \includegraphics[width=0.5\linewidth]{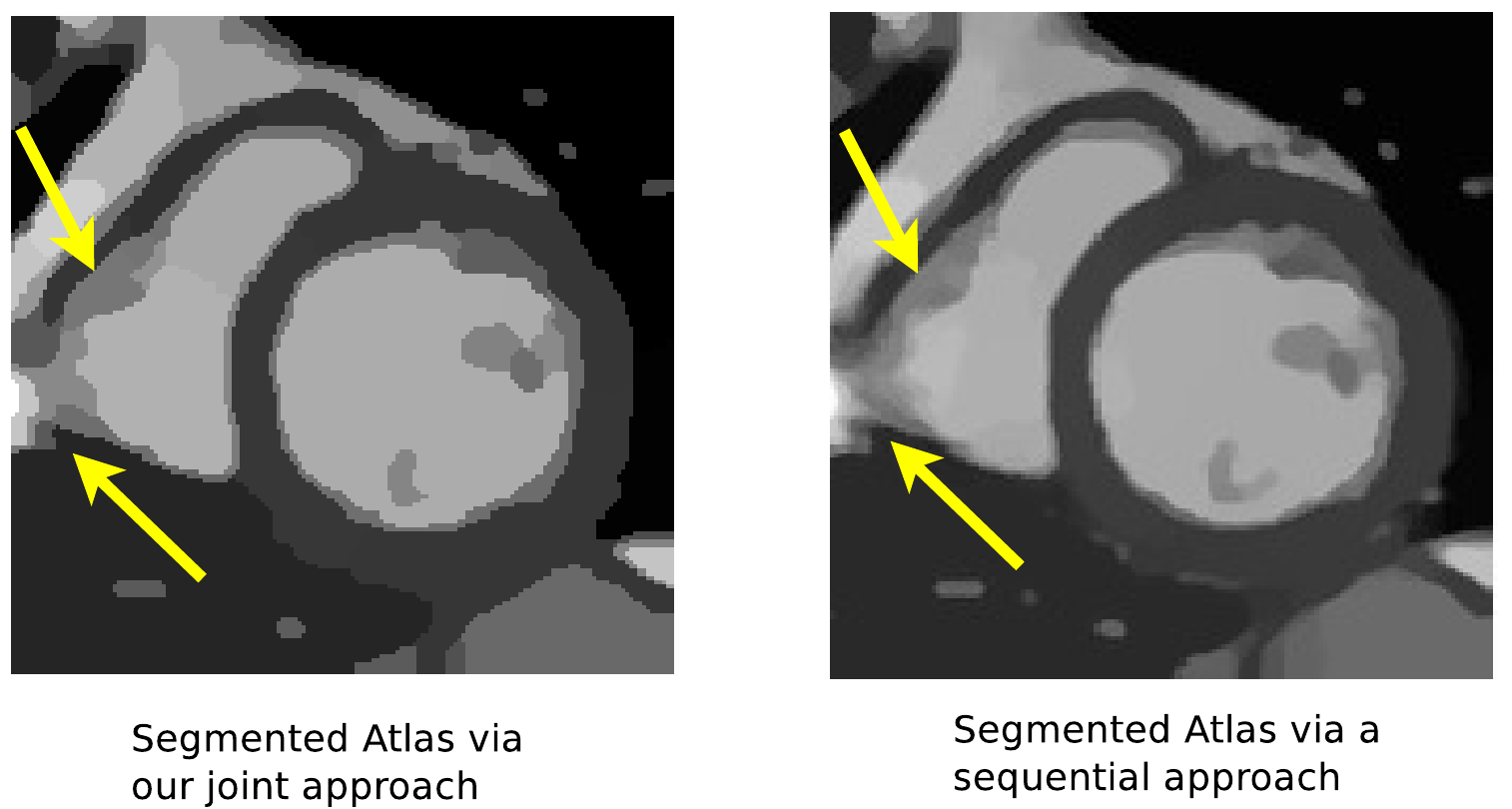}\vspace{-0.5cm}
    \caption{Comparison of the atlases generated by our joint model, and by a sequential approach with our discrepancy measure. \textcolor{black}{Visual assessment in terms of blurring artefacts are pointed out with yellow arrows}.}
    \label{fig:results_heart_100_108_comparaison_atlas}
\end{figure}
\begin{figure}[htp]
    \centering
    \includegraphics[width=\linewidth]{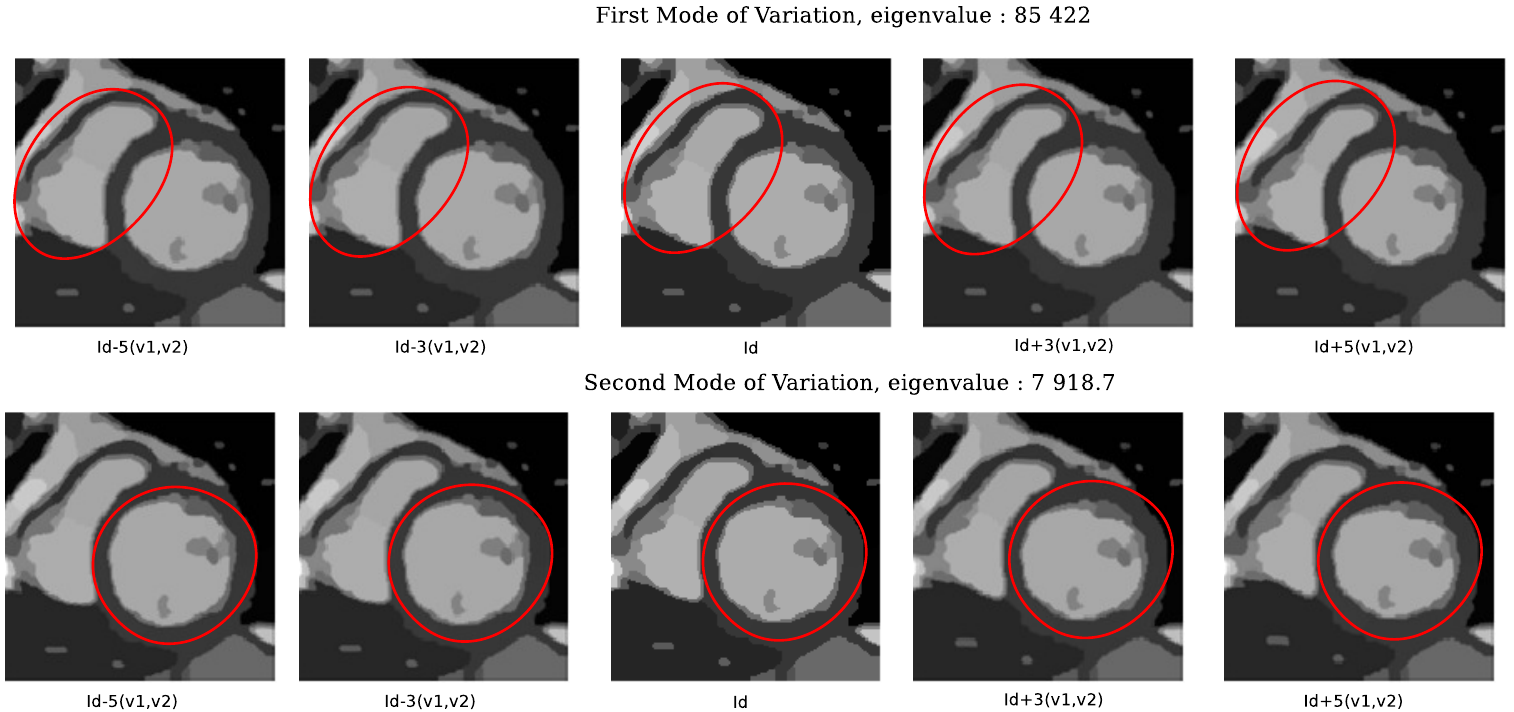}\vspace{-0.5cm}
    \caption{First three modes of variation obtained with our method based on approximation modelling. \textcolor{black}{The red circles shows the moving part: dilation/contraction of the right ventricular chamber for the first mode, and vertical stretching of the left ventricular chamber for the second mode.}}
    \label{fig:results_heart_100_108_comparaison_pca}
\end{figure}
\begin{figure}[htp]
    \centering
    \includegraphics[width=\linewidth]{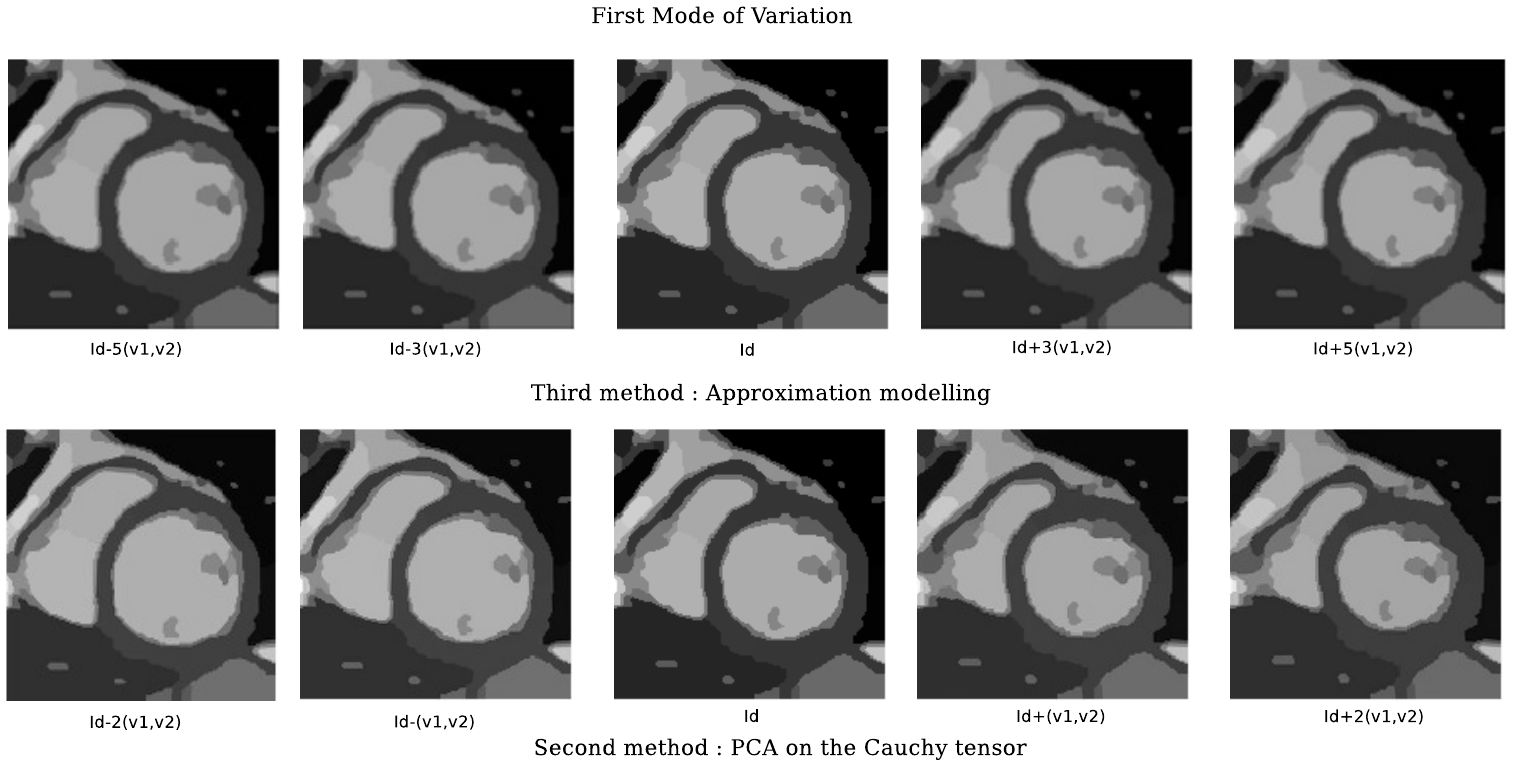}\vspace{-0.5cm}
    \caption{Comparison of the first mode of variation for 2 different methods.}
    \label{fig:results_heart_100_108_comparaison_pca_3_methods}
\end{figure}
\section{Conclusion}\label{sec:conclusion}
{\textcolor{black}{This paper addressed the twofold question of finding an average representative of a dataset of different subjects and deriving then some statistics by identifying the main modes of variation. To achieve this goal, the problem is envisioned as a joint registration/segmentation one, based on nonlinear elasticity concepts. Once a linearisation is performed in order for the displacement fields to live in a vector space, a PCA is investigated to capture the meaningful geometric variations. The computational feasibility is exemplified through several applications, demonstrating that the generated atlas encodes the fine geometrical structures and exhibits sharp edges with fewer ghosting artefacts, while the proposed approximation based PCA uncorrelates properly the more significant tendencies. This work paves the way to several applications among which: (i) motion-correction problem from a set of multiple acquisitions corrupted by motion in this joint reconstruction and registration framework or (ii) multiscale segmentation/registration problem to extract the deformation pairing the structures (i.e. viewed as global deformations) and an additional deformation reflecting the more local variability.}}

\section*{Acknowledgments}
This project is co-financed by the European Union with the European regional development fund (ERDF, HN0002137) and by the Normandie Regional Council via the M2NUM project. We also would like to acknowledge the support from the National Physics Laboratory, the Leverhulme Trust project “Breaking the non-convexity barrier”, the EPSRC grant EP/M00483X/1, the EPSRC Centre Nr. EP/N014588/1, the RISE projects ChiPS and NoMADS, the Cantab Capital Institute for the Mathematics of Information, and the Alan Turing Institute.

\bibliographystyle{siamplain}
\bibliography{references}
\includepdf[pages=-]{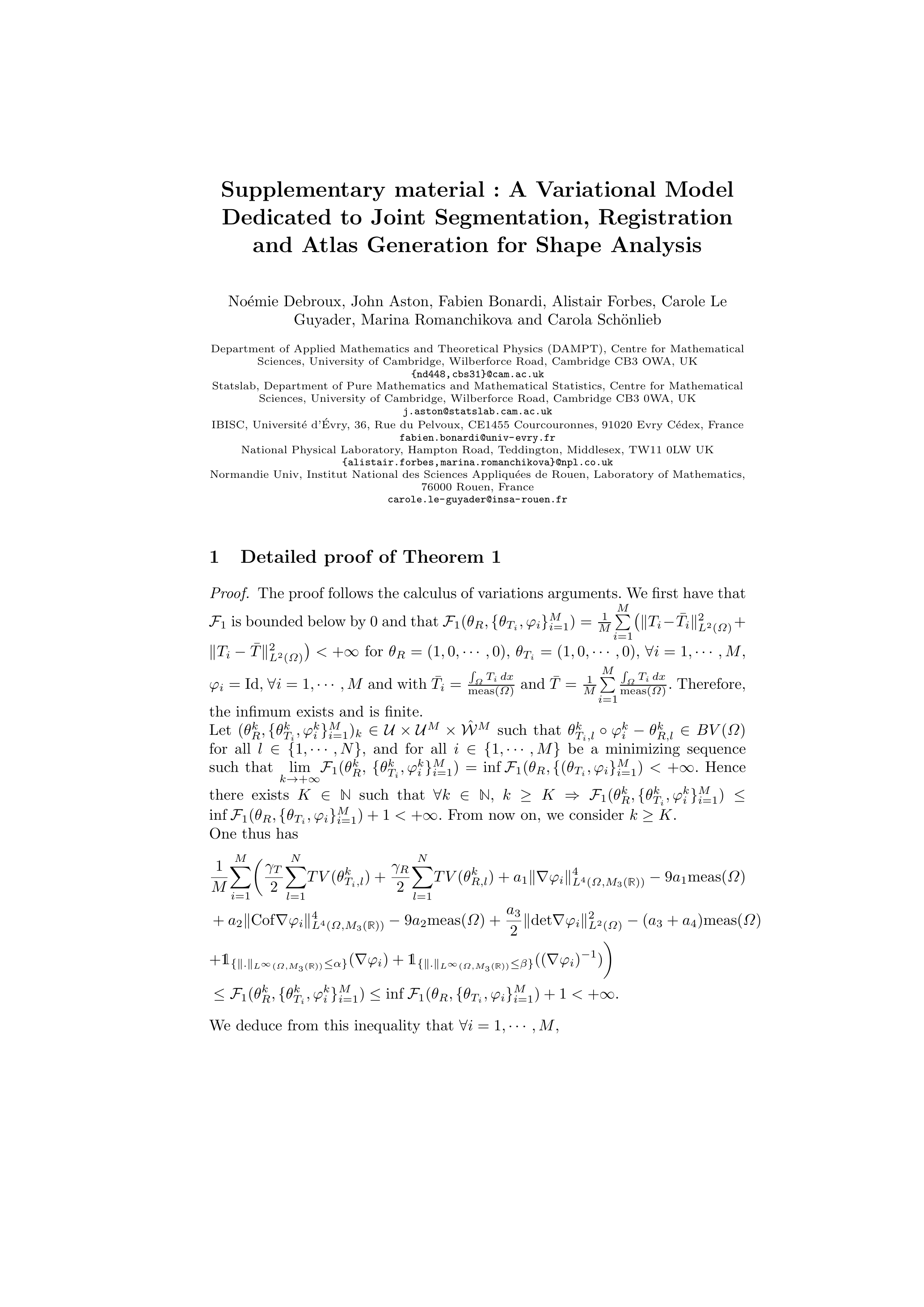}
\end{document}